\newtheorem{theorem}{Theorem}[section]
\newtheorem{lemma}{Lemma}[section]
\newtheorem{proposition}{Proposition}[section]
\DeclareMathOperator*{\res}{Res}
\DeclareMathOperator*{\im}{Im}
\DeclareMathOperator*{\re}{Re}
\numberwithin{equation}{section}
\newtheorem{remark}{Remark}[section]
\begin{document}

	\baselineskip=17pt
	
	\title{  The  defocusing  NLS equation with  nonzero background: Large-time  asymptotics   in the solitonless  region }
	\author{ Zhaoyu Wang and   Engui Fan\thanks{Corresponding author: faneg@fudan.edu.cn}\\[4pt]
	\small	School of Mathematical Science and Key Laboratory   for Nonlinear Science,\\ \small Fudan University,
		Shanghai 200433, P.R. China}
	\maketitle

\begin{abstract}
	\baselineskip=17pt
	 We consider the Cauchy problem for the defocusing   Schr$\ddot{\text{o}}$dinger (NLS) equation with   a nonzero background
	\begin{align}
&iq_t+q_{xx}-2(|q|^2-1)q=0, \nonumber\\
&q(x,0)=q_0(x), \quad \lim_{x \to \pm \infty}q_0(x)=\pm 1.\nonumber\
	\end{align}
 Recently, for the space-time  region  $|x/(2t)|<1$ which is a solitonic  region  without   stationary phase  points on the jump contour,  Cuccagna and  Jenkins presented
  the asymptotic stability of the $N$-soliton solutions for the   NLS equation by using the $\bar{\partial}$ generalization of the Deift-Zhou nonlinear steepest descent method.
 Their large-time asymptotic expansion takes     the form
\begin{align}
q(x,t)=   T(\infty)^{-2} q^{sol,N}(x,t) + \mathcal{O}(t^{-1 }),\label{res1}
\end{align}
whose leading term is N-soliton and the second term $\mathcal{O}(t^{-1})$ is a residual error  from a $\overline\partial$-equation.
In this paper, we are interested in the  large-time asymptotics  in
  the space-time  region  $ |x/(2t)|>1$  which is outside the soliton region, but   there will be  two stationary points appearing on
the jump contour $\mathbb{R}$.
We found   a asymptotic  expansion that is different from  (\ref{res1}) 
 \begin{align}
q(x,t)= e^{-i\alpha(\infty)} \left(1 +t^{-1/2} h(x,t) \right)+\mathcal{O}\left(t^{-3/4}\right),\label{res2}
\end{align}
whose leading term is a nonzero  background,  the second $t^{-1/2}$ order term  is from continuous spectrum
 and the third term $\mathcal{O}(t^{-3/4})$ is a residual error  from a $\overline\partial$-equation.
The above two  asymptotic  results (\ref{res1}) and (\ref{res2}) imply  that the region $ |x/(2t)|<1$ considered by  Cuccagna and  Jenkins is  a  fast decaying   soliton solution region,
 while the region $ |x/(2t)|>1$ considered by us is a slow decaying nonzero background region.
		\\[5pt]
	\noindent {\bf Keywords}:  defocusing NLS equation, Large-time asymtotics, Riemann-Hilbert problem, $\bar{\partial}$ steepest descent method.
\\
{\bf   Mathematics Subject Classification:} 35Q51; 35Q15; 37K15; 35C20.

\end{abstract}

\newpage

\tableofcontents%

	\section{Introduction}
		In this paper, we study the Cauchy problem for the defocusing nonlinear Schr$\ddot{\text{o}}$dinger (NLS) equation on $\mathbb{R} \times \mathbb{R}_+  $:
	\begin{equation}\label{q}
		iq_t+q_{xx}-2(|q|^2-1)q=0,
	\end{equation}
	\begin{equation}\label{inq}
		q(x,0)=q_0(x), \quad \lim_{x \to \pm \infty}q_0(x)=\pm 1.
	\end{equation}
The NLS equation is one of the basic models in   many branches of science, such as fibre optics, biology, physics, mathematics and social science \cite {A1,A2,A3,A4,A5,A7}.
 The Lax pair  of the NLS equation was first derived by Zakharov and Shabat   \cite{ZS1};  The  well-posedness of the NLS equation   with
the initial data in Sobolev spaces $L^2(\mathbb{R})$  and   $ H^s(\mathbb{R}), \ s>0 $  was proved  by  Tsutsumi  and Bourgain  respectively \cite{Tsutsumi,Bourgain}.
 The inverse scattering transform (IST)    for the focusing NLS equation  with zero boundary conditions   was first developed by Zakharov
and Shabat   \cite{ZS2}.  The next important step of the development of  IST method   is  that the Riemann-Hilbert (RH) method,  as the  modern version of IST, was   established  by Zakharov and Shabat   \cite{Zakharov1979}.
It has   become clear that the    RH  method  is applicable to  construction  of exact solutions   and  asymptotic  analysis of  solutions
for   a wide class of    integrable systems  \cite{MonvelCH,O1,F3,F1,F2,F4,Bio3,F5,ZF2019,RN12,wengyan}.

  We mention the following works  on the long-time asymptotics of the defocusing NLS equation.
    For the initial data in  the Schwarz space $\mathcal{S}(\mathbb{R})$ and   using the IST method,  Zakharov and Manakov
  obtained the long-time asymptotics  of the  NLS equation  \cite{ZS3}
    \begin{equation}
    	q(x,t)=t^{-1/2}h(x/t)e^{\frac{ix^2}{4t} \pm 2i |h(x/t)|^2  \log t}+\mathcal{O}(t^{-3/2}),\nonumber
    \end{equation}
   with an arbitrary function $h(x)$.  In 1981, Its presented a stationary phase method
   to analyze the long-time asymptotic behavior   for the NLS equation \cite{its1}.  In 1993,  Deift and Zhou  developed a  nonlinear steepest descent method to rigorously obtain the long-time asymptotic behavior   for the mKdV equation
 \cite{sdmRHp}.   Later this method was extended  to get the leading  and high-order   asymptotic behavior for the solution of the  NLS equation (\ref{q})
 with the initial data  $q_0(x) \in \mathcal{S}(\mathbb{R})$  \cite{PX2,PX21}
    \begin{equation}
	q(x,t)=t^{-1/2} \alpha(z_0) e^{\frac{ix^2}{2t} - i v(z_0) \log 4t}+ \mathcal{O}(t^{-1} \log t).\nonumber
\end{equation}
Under  much weaker  weighted Sobolev initial data $q_0 \in H^{1,1} (\mathbb{R})$,  Deift and Zhou  found  the following result \cite{PX3}
    \begin{equation}
	q(x,t)=t^{-1/2} \alpha(z_0) e^{\frac{ix^2}{2t} - i v(z_0) \log 4t}+ \mathcal{O}(t^{-1/2-\kappa}),\ \  0<\kappa<1/4.\label{erdd}
\end{equation}
In the serial of articles \cite{VAH1,VAH2,VAH3},  Vartanian applied IST method to
 compute  the leading and first correlation terms in the asymptotic behavior of
 the NLS equation with  the finite     density initial data  $ q_0(x)-q_0(\pm \infty)  \in \mathcal{S}(\mathbb{R})$  as $x,t \to \pm \infty$ for
  both $|x/(2t)|>1$ (outside the soliton light cone)  and $|x/(2t)|<1$ (inside the soliton light cone).
  In 2008,  for $q_0\in H^{1,1}(\mathbb{R})$, Dieng and Mclaughlin  applied the  $\bar{\partial}$-steepest descent method to
 obtain a sharp estimate \cite{MK}
    \begin{equation}
	q(x,t)=t^{-1/2} \alpha(z_0) e^{\frac{ix^2}{2t} - i v(z_0) \log 4t}+ \mathcal{O}(t^{-3/4}).\nonumber
\end{equation}
 Jenkins  investigated long-time/zero-dispersion limit of
the solutions to   the defocusing NLS equation  associated to  step-like  initial data   \cite{Jenkins}
\begin{equation}
q(x,0) \sim \begin{cases}
		Ae^{-2i\mu x/\varepsilon},\quad x\to -\infty,\\
	1, \quad x\to \infty.	\nonumber
	\end{cases}	
\end{equation}
Fromm, Lenells and Quirchmayr  studied the long-time asymptotics  for the defocusing NLS equation with  the step-like boundary condition \cite{Le}
\begin{equation}
q(x,t) \sim \begin{cases}
		\alpha e^{2i\beta x +i \omega t},\quad x\to -\infty,\\
	0, \quad x\to \infty.	\nonumber
	\end{cases}	
\end{equation}

Recently,  for  the finite density initial data $q_0(x)-\tanh x \in H^{4,4} (\mathbb{R})$,
 Cuccagna and Jenkins  derived the leading order approximation to the solution of NLS  equation in the solitonic
  space-time region I:  $|\xi|<1$ with $  \xi=x/(2t)$  by using  $\bar{\partial}$ generalization of the Deift-Zhou nonlinear steepest descent method  \cite{CJ} 
\begin{align}
& q(x,t)=   T(\infty)^{-2} q^{sol,N}(x,t) + \mathcal{O}(t^{-1 }), \nonumber
\end{align}
whose leading term is N-soliton and the second term $\mathcal{O}(t^{-1})$ is a residual error  from a $\overline\partial$-equation.
 See the region I in  Figure \ref{spacetime}.
 In our  study,  for  the  solitonless  region II:   $1<  |\xi|, \ \xi=\mathcal{O}(1) $,
we further    obtain the large-time asymptotic behavior of   the NLS  equation (\ref{q})-(\ref{inq}) in the form
\begin{equation}
  q(x,t)= e^{-i\alpha(\infty)} \left( 1 +t^{-1/2} h(x,t) \right)+\mathcal{O}\left(t^{-3/4}\right). \nonumber
 \end{equation}
Compared to the results of Vartanian, see  Theorem 2.2.1-2.2.2 in \cite{VAH2},
 we give a more holistic description  of the solution as the parameters of a multi-soliton are modulated by soliton-soliton and soliton-radiation interactions.
We also weaken the request on the initial data from the Schwartz  space $\mathcal{S}(\mathbb{R})$ to a weighted Sobolev space  $H^{4,4}(\mathbb{R})$.
The $\bar{\partial}$-steepest descent method  was   first    developed    by McLaughlin and   Miller
   to analyze the asymptotics of orthogonal polynomials with non-analytical weights  \cite{McL1,McL2}.
In recent years,   this method has been successfully used to obtain  the long-time asymptotics and
 the soliton resolution conjecture for some integrable systems  \cite{fNLS,Liu3,LJQ,YF1,YYLmch}.

       \begin{figure}
	\begin{center}
		\begin{tikzpicture}
            \draw[yellow!20, fill=yellow!20] (0,0)--(4.5,0)--(4.5,2)--(0, 2);
            \draw[blue!20, fill=green!20] (0,0 )--(4.5,2)--(0,2)--(0,0);
            \draw[blue!20, fill=green!20] (0,0 )--(-4.5,0)--(-4.5,2)--(0, 2)--(0,0);
           \draw[yellow!20, fill=yellow!20] (0,0 )--(-4.5,0)--(-4.5,2)--(0,0);
		\draw [-> ](-5,0)--(5,0);
		\draw [-> ](0,0)--(0,2.8);
		\node    at (0.1,-0.3)  {$0$};
		\node    at (5.26,0)  { $x$};
		\node    at (0,3.2)  { $t$};
		 \node  [below]  at (4.4,2.7) {\small$\xi=1$};
		 \node  [below]  at (-4.4,2.7) {\small$\xi=-1$};
		\draw [red](0,0)--(-4.5,2);
		\draw [red](0,0)--(4.5,2);
	   \node  []  at (0.7,1.5) { $I$};
     \node  []  at (-0.7,1.5) { $I$};
     \node [] at (0,0.85) {Solitonic region};
		\node  []  at (-3.2,0.9) { $II$};
        \node [] at (-2.8,0.3) {Solitonless region};
	\node  []  at (3.2,0.9) { $II$};
    \node [] at (2.8,0.3) {Solitonless region};
		\end{tikzpicture}
	\end{center}
	\caption{ \small The space-time region of $x$ and $t$, where the blue region I: $|\xi|<1$  which is solitonic region and  was discussed by Cuccagna and Jenkins;
the yellow regions II: $|\xi|>1, \ \xi=\mathcal{O}(1)$ which is solitonless region and  we
will consider  in this paper.  }
	\label{spacetime}
\end{figure}
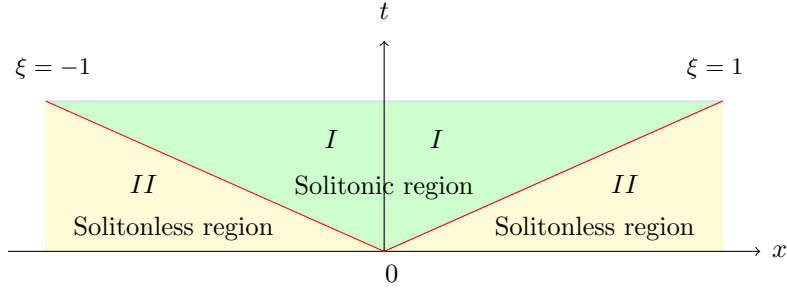



\begin{remark}\label{whyweneedM}        We consider the  region  $|\xi|>1, \ \xi=\mathcal{O}(1)$ due to the following reasons:
    \begin{itemize}
        \item  This region for $|x|>2t$ as $t \to \infty$  was completely ignored by  \cite{CJ}
        and the long-time asymptotic behavior of the defocusing NLS in this region is still unknown.

        \item  The above region also ensure  that the phase   point    $\xi_2$  defined by \eqref{xi1} and \eqref{xi2}  is   bounded,  such that
         subsequent  estimates  on  ${\rm Re} (i\theta(z))$,  the jump matrix  and  $\bar{\partial}$-derivatives are  reasonable,  see Proposition \ref{prop1}---\ref{prop3},
        Proposition \ref{Estv2}---\ref{sum5}, etc.


        \item  For  the case $|\xi|\rightarrow \infty$,  we can derive the large-$x$ behavior of the NLS equation using   the   $\bar{\partial}$-steepest descent method.
        \item   We will consider the transition space-time region $|x\pm2t|=O(1)$ in the next study.
    \end{itemize}
\end{remark}

	   The structure of this work is as follows.  In Section 2, we quickly recall
	  the spectral analysis of the Lax pair which was obtained in \cite{CJ},  but is useful for our work.
	   In Section 3,  we list some important  properties of the scattering data and the reflection coefficient, and formulate an RH problem for  $M(z)$ to characterize the Cauchy problem (\ref{q})-(\ref{inq}).
       In Section 4 and Section 5,    we make two deformations to the RH problem for   $M(z)$.
       One is to regularize the RH problem and the other is to obtain a mixed $\bar{\partial}$-problem
by  continuous extensions of  jump matrices.
     Then,  we solve  the mixed $\bar{\partial}$-problem by decomposing it
      into a pure RH problem and a pure
     $\bar\partial$-problem according to  the $\bar{\partial}$-derivative.
       We give the detailed analysis on the pure RH problem and the pure $\bar\partial$-problem  in Section 6 and Section 7 respectively.
       In Section 8,  we calculate the large-time asymptotics for the NLS equation.

       \section{Spectral analysis on  the Lax pair}

   To state our results, we give the following definition of normed spaces:
	    \begin{align*}
	    	    	&L^{p,s}(\mathbb{R}):=\{ \mu(x)\in L^{p}(\mathbb{R}) | \langle x \rangle^s \mu(x) \in L^{p}(\mathbb{R})  \}\; \text{with}\; ||\mu||_{L^{p,s}} := ||\langle x \rangle^s \mu||_{L^{p}},\\
	    	    	&W^{k,p}(\mathbb{R}):=\{ \mu(x)\in L^{p}(\mathbb{R}) | \partial^j\mu(x) \in L^{p}(\mathbb{R}), j=1,\cdots,k \}\;\text{with}\; ||\mu||_{W^{k,p}} := \sum_{j=0}^{k}||\partial^{j} \mu||_{L^{p}},\\
                    &H^{k,s}(\mathbb{R}):=\{ \mu(x)\in L^{2}(\mathbb{R}) | \langle x \rangle^s \mu(x) \in L^{2}(\mathbb{R}),\langle y \rangle^k \hat{h}(y) \in L^{2}(\mathbb{R})\},
	    \end{align*}
	   where $ \langle x \rangle := \sqrt{1+|x|^2}$ and $\hat{h}$ is the Fourier transform defined by
	   \begin{equation}
	   	  \hat{h}(y) :=\frac{1}{2 \pi } \int_{\mathbb{R}} e^{-ixy}h(x)\, \mathrm{d}x.\nonumber
	   \end{equation}

Below we give a quick review of the direct problem for the initial value problem (\ref{q})-(\ref{inq}).  For details, see \cite{CJ}.
 The NLS equation (\ref{q}) admits a Lax pair
  \begin{align}
  	  	&\psi_x=L \psi, \quad L=L(z;x,t)=i \sigma_3 \left(Q-\lambda\left(z\right)\right),\label{laxx}\\
  	  	&\psi_t=T \psi,\quad T=T(z;x,t)=-2\lambda(z) L+i(Q^2-I)\sigma_3+Q_x, \label{laxt}
  \end{align}
	where
	\begin{equation*}
		Q=Q(x,t)=\left(\begin{array}{cc}
			0 & \bar{q}(x,t)\\
			q(x,t) & 0
		\end{array}\right),\quad \lambda(z) = \frac{z+z^{-1}}{2}, \quad \sigma_3= \left(\begin{array}{cc}
		1 & 0\\
		0 & -1
	\end{array}\right).
	\end{equation*}
    Replacing $L$ and $T$ by their limits as $x \to \pm \infty$, the Lax pair (\ref{laxx})-(\ref{laxt}) changes to
     \begin{align}
   	&\varphi_x^\pm=L_{\pm} \varphi^\pm, \quad L_\pm=L_\pm(z;x,t)=i \sigma_3 (Q_\pm-\lambda I),\label{alaxx}\\
   	&\varphi_t^\pm=T_\pm \varphi^\pm,\quad T_\pm=T_\pm(z;x,t)=-2\lambda L_\pm, \  \ Q_\pm=\pm \sigma_1.\label{alaxt}
   \end{align}
      This spectral problem  is easy to be solved with a solution
    \begin{align}
    	\varphi^\pm & = Y_\pm e^{-it\theta(z)\sigma_3}, \quad z \not= \pm 1,\nonumber \\
    	\varphi^\pm & = Y_\pm, \quad z = \pm 1, \nonumber
    \end{align}
	where
    \begin{align}
       & Y_\pm=I\pm \sigma_1/z,\quad \text{det}Y_\pm=1-z^{-2},\\
       & \theta(z)=\zeta(z) \big(x/t- 2 \lambda(z) \big),\quad\zeta(z)=(z-z^{-1})/2.
     \end{align}

  We define Jost solutions $\psi^\pm$ of  Lax pairs (\ref{laxx})-(\ref{laxt})  with the  asymptotic behavior
	\begin{equation*}
		\psi^\pm(z;x,t) \sim  \varphi^\pm(z;x,t),\quad x \to \pm \infty.
	\end{equation*}
	Make  a transformation
	\begin{equation}
		m^\pm(z;x,t)=\psi^\pm(z;x,t)e^{it\theta(z)\sigma_3},
	\end{equation}
then we have
    \begin{equation}
    		m^\pm(z;x,t) \sim Y_{\pm}, \quad x \to \pm \infty,
    \end{equation}
	and  a new Lax pair
	\begin{align}
		&(Y^{-1}_\pm m^\pm)_x - i \zeta \left[ Y^{-1}_\pm m^\pm, \sigma_3 \right]=Y^{-1}_\pm \Delta L_\pm m^\pm, \quad z \not= \pm 1,\label{mt1}\\
		&(Y^{-1}_\pm m^\pm)_t + 2i \zeta \lambda \left[ Y^{-1}_\pm m^\pm, \sigma_3 \right]=Y^{-1}_\pm \Delta T_\pm m^\pm, \quad z \not= \pm 1,\label{mt2}		
	\end{align}
    where $\Delta L_\pm=L-L_\pm$, $\Delta T_\pm =T-T_\pm$.
  The formula (\ref{mt1}) can be converted  into  the integral equation:
For $z \not= \pm 1$,
\begin{equation}\label{mn1}
m^\pm(z;x)=Y_\pm + \int_{\pm \infty}^{x} \left[ Y_\pm e^{-i \zeta(z) (x-y)\sigma_3}  Y^{-1}_\pm    \right] \left[ \Delta L_\pm(z;y)m^\pm(z;y)      \right] e^{i \zeta(z) (x-y) \sigma_3}  \, \mathrm{d}y.
\end{equation}
Then taking the limit of (\ref{mn1}), we find as $z \to \pm 1$,
\begin{equation}
m^\pm(\pm 1;x)=Y_\pm + \int_{\pm \infty}^{x} \left[ I + (x-y)L_\pm      \right] \Delta L_\pm (z;y) m^\pm(z;y) \, \mathrm{d}y,
\end{equation}
where we use the property that $\Delta L_\pm$  is sufficiently decayed.
The existence, analyticity  and differentiation  of $m^\pm$ can be proven directly,     here we just list their  properties,
for details, see \cite{CJ}.

       \begin{figure}
    	\begin{center}
    		\begin{tikzpicture}
    			\draw [-> ](-3.5,0)--(3.5,0);
    			\draw [-> ](0,-2.8)--(0,2.8);
    			\node    at (0.3,-0.3)  {$0$};
    			\node    at (4,0)  { Re$z$};
    			\node    at (0,3.2)  { Im$z$};
    			\node  [below]  at (1.3,1.2) {$\mathbb{C}^+$};
    			\node  [below]  at (-1.2,1.2) {$\mathbb{C}^+$};
    			\node  [below]  at (-1.2,-0.8) {$\mathbb{C}^-$};
    			\node  [below]  at (1.3,-0.8) {$\mathbb{C}^-$};
    		\end{tikzpicture}
    	\end{center}
    	\caption{\small{The analytical region of $m(z)$: $m_1^-$ and $m_2^+$ are analytic in $\mathbb{C}^+$; $m_1^+$ and $m_2^-$ are analytic in $\mathbb{C}^-$.} }
    	\label{analytic}
    \end{figure}
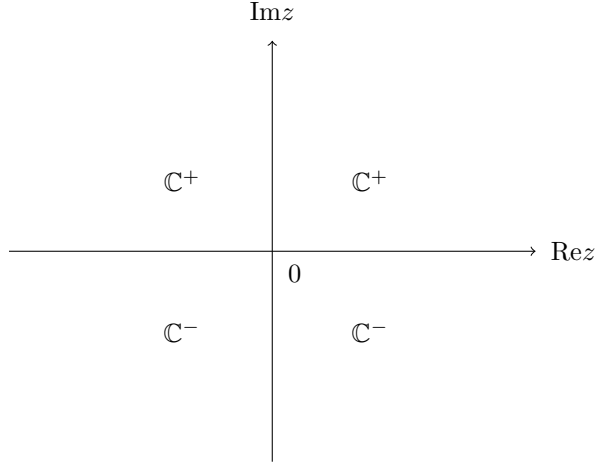

    	\begin{lemma}\label{sin1}
        Let $m^\pm_j(z;x), j=1,2$ be the column vector solutions of  the equation (\ref{mn1}).
    	Given $n \in \mathbb{N}_0$ and $q\in \tanh(x) + L^{1,n}(\mathbb{R})$,
    	$m_1^+(z;x)$ and $m_2^-(z;x)$  can be analytically extended to $z \in \mathbb{C}^-$. Similarly, 	$m_1^-(z;x)$ and $m_2^+(z;x)$  can be analytically extended to $z \in \mathbb{C}^+$  (See Figure \ref{analytic}).
        For any $x_0 \in \mathbb{R}$, $m_1^+(z;x)$ and $m^-_2(z;x)$ are continuous differentiable maps defined on the lower half plane.
    	\begin{align}
    	\partial^n_z m^+_1(z;x)&: \overline{\mathbb{C}^-} \backslash \{-1,0,1 \} \to C^1\big([x_0,\infty), \mathbb{C}^2\big) \cap W^{1,\infty}\big( [x_0,\infty), \mathbb{C}^2\big),\\
    	\partial^n_z m_2^-(z;x)&: \overline{\mathbb{C}^+} \backslash \{-1,0,1 \} \to C^1\big((-\infty, x_0], \mathbb{C}^2\big) \cap W^{1,\infty}\big((-\infty, x_0], \mathbb{C}^2\big).
    	\end{align}
    	Moreover, maps $q(x) \to  \frac{\partial^n}{\partial z^n} m^+_1(z;x)$ and $q(x) \to  \frac{\partial^n}{\partial z^n} m^-_2(z;x)$ are locally Lipschitz continuous from
    	\begin{align}
    	\tanh(x)+L^{1,n}(\mathbb{R}) &\to  L^\infty_{loc} \left(  \overline{\mathbb{C}^-} \backslash \{-1,0,1 \},C^1\big([x_0,\infty), \mathbb{C}^2\big) \cap W^{1,\infty}\big( [x_0,\infty), \mathbb{C}^2\big)   \right),\nonumber\\
    	\tanh(x)+L^{1,n}(\mathbb{R}) &\to  L^\infty_{loc} \left(  \overline{\mathbb{C}^-} \backslash \{-1,0,1 \}, C^1\big((-\infty, x_0], \mathbb{C}^2\big) \cap W^{1,\infty}\big((-\infty, x_0], \mathbb{C}^2\big)   \right).\nonumber
    	\end{align}
    	Similar results hold for $q(x) \to m_1^-(z;x)$ and for $q(x) \to m_2^+(z;x)$.
    	
    	In particular, there exists a function $F_n( \cdot )$, which is increasing and independent of $q$, such that
    	\begin{equation}
    	|\partial_z^n m^+_1(z;x)|\le F_n  \left( \left(1+|x|\right)^n ||q-1||_{L^{1,n}(x,\infty)} \right), \quad z \in \overline{\mathbb{C}^- }\backslash \{-1,0,1\}.
    	\end{equation}
    	Moreover, assume that $q(x)$ and $\tilde{q}(x)$ are sufficiently close and then we obtain
    	\begin{equation}
    	|\partial_z^n \left( m_1^+(z;x)-   \tilde{m}_1^+(z;x) \right)| \le  ||q-\tilde{q}||_{L^{1,n}(x,\infty)} F_n\left( \left( 1+|x| \right)^n||q-1||_{L^{1,n}(x,\infty)} \right).
    	\end{equation}
    	The other Jost functions also have similar statements.
    	
    \end{lemma}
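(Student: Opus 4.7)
My plan is to analyze the Volterra integral equation \eqref{mn1} for each column of $m^\pm$ separately by Neumann iteration, using the $L^{1,n}$ decay of $q-\tanh x$ (which, combined with the exponential decay of $\tanh x\mp 1$ at $\pm\infty$, controls $\Delta L_\pm$). Since $\zeta(z)=(z-z^{-1})/2$ satisfies $\sgn(\im \zeta(z))=\sgn(\im z)$ away from $\{-1,0,1\}$, a direct inspection of the kernel $Y_\pm e^{-i\zeta(z)(x-y)\sigma_3}Y_\pm^{-1}$ shows that for $y\ge x$ the entries acting on the first column of $m^+$ have bounded exponential factors precisely when $\im z\le 0$, and symmetrically for the other three columns; this pins down the four domains of analyticity stated in the lemma.

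Next I would construct $m_1^+$ as the Neumann series $\sum_{k\ge 0}\mathcal{K}^k Y_+^{(1)}$, where $Y_+^{(1)}$ is the first column of $Y_+$ and $\mathcal{K}$ is the integral operator in \eqref{mn1}. Boundedness of the exponential in the closed lower half-plane together with $|\Delta L_+(z;y)|\le C(z)\bigl(|q(y)-\tanh y|+|\tanh y-1|\bigr)$ gives the standard Volterra bound $\|\mathcal{K}^k\|\le C(z)^k\|q-1\|_{L^1(x,\infty)}^k/k!$, hence absolute and uniform convergence on compact subsets of $(\overline{\mathbb{C}^-}\setminus\{-1,0,1\})\times[x_0,\infty)$. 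Analyticity of each iterate in $z$ passes to the sum by Morera's theorem on small contours avoiding $\{-1,0,1\}$, and differentiability in $x$ as well as the $C^1\cap W^{1,\infty}$ regularity follow by differentiating under the integral sign using $\Delta L_+(z;\cdot)\in L^1$.

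For the $z$-derivatives I would differentiate the series termwise. Each differentiation extracts either a factor $\partial_z\zeta(z)\cdot(x-y)$ from the exponential or a bounded derivative of $Y_\pm$, so after $n$ applications the integrand gains a polynomial in $(x-y)$ of degree at most $n$. Splitting $|x-y|^n\le 2^n(\langle x\rangle^n+\langle y\rangle^n)$, the weight $\langle y\rangle^n$ is absorbed by the norm $\|q-1\|_{L^{1,n}(x,\infty)}$ while the factor $\langle x\rangle^n$ is pulled outside, yielding $|\partial_z^n m_1^+|\le F_n\bigl((1+|x|)^n\|q-1\|_{L^{1,n}(x,\infty)}\bigr)$ with $F_n$ an increasing, $q$-independent function obtained by summing the resulting factorial series.

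Local Lipschitz dependence on $q$ follows by subtracting the integral equations for $m_1^+$ and $\tilde m_1^+$: the difference satisfies a linear inhomogeneous Volterra equation with forcing proportional to $q-\tilde q$ (and the same polynomial factor $(x-y)^n$ after $n$ $z$-differentiations), and with integral operator controlled uniformly by $\|q-1\|_{L^{1,n}}+\|\tilde q-1\|_{L^{1,n}}$; Neumann iteration on this linear equation delivers the claimed estimate. The main technical obstacle, and the reason for the excluded points in the statement, is uniform control near $z\in\{-1,0,1\}$, where $\det Y_\pm=1-z^{-2}$ vanishes or $Y_\pm$ blows up and the exponential bound in the kernel degenerates; this forces the restriction to compact subsets of $\overline{\mathbb{C}^\pm}\setminus\{-1,0,1\}$ on which all bounds above are uniform in $z$.
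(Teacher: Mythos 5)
The paper does not actually prove this lemma --- it is quoted from Cuccagna--Jenkins \cite{CJ} with the remark that existence, analyticity and differentiation ``can be proven directly'' --- and your Volterra/Neumann-series argument is precisely the standard proof given in that reference: the sign analysis of $\im \zeta(z)$ to fix the half-planes of analyticity of each column, factorial bounds $C^k\|q-1\|_{L^1}^k/k!$ for the iterates, termwise $z$-differentiation producing $(x-y)^n$ factors that are split and absorbed into the $L^{1,n}$ norm, and a subtracted Volterra equation for the Lipschitz estimate. Your sketch is sound; the only imprecision is in the final sentence, where you partly attribute the breakdown at $z=\pm1$ to a degenerating exponential --- there the exponential factors remain bounded (the sign of $\im\zeta$ is unaffected) and the obstruction is solely the blow-up of $Y_\pm^{-1}$ through $\det Y_\pm=1-z^{-2}\to 0$, while at $z=0$ it is the blow-up of $Y_\pm=I\pm\sigma_1/z$ itself (also note $\Delta L_\pm=i\sigma_3(Q-Q_\pm)$ is in fact independent of $z$, so your constant $C(z)$ is unnecessary).
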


    We know from Lemma \ref{sin1} that the Jost functions have singularities  at points $-1$, $0$ and $1$.
    The following lemma shows that the singularity at the point $z=0$ can not be removed, but the singularities at points $z=\pm 1$ can be removed by improving the attenuation of the initial data.
   \begin{lemma}
    Let $P$  be a compact support of $\{-1,1\}$ in $\overline{\mathbb{C}^-} \backslash \{0 \}$.	Then, for $q(x)\in L^{1,n+1}(\mathbb{R})+\tanh(x)$ where $n \in \mathbb{N}_0$ is fixed. There exists a constant $c$ such that for $z\in P$ we obtain
    \begin{equation}
    \left|m^+_1(z;x)-\left(  \begin{array}{c}
    1\\
    1/z
    \end{array}\right)  \right|\le c \langle x^- \rangle e^{c \int_{x}^{\infty} \langle y-x \rangle |q(y)-1| \, \mathrm{d}y} ||q-1||_{L^{1,1}(x,\infty)},
    \end{equation}
    where $x^-= \text{max} \{-x,0 \}$. Thus, we know the map $q \to m^+_1(z; \cdot)$ is locally Lipschitz continuous from
    \begin{equation}
    \tanh(x)+L^{1,1}(\mathbb{R}) \to L^\infty \Big( \overline{\mathbb{C}^-}  \backslash \{0\}, C^1\big( [x_0,\infty), \mathbb{C}^2\big) \cap W^{1,\infty}\big( [x_0,\infty)   , \mathbb{C} \big)  \Big).
    \end{equation}
    Furthermore, there exists a function $F_n(t)$, which is increasing and independent of $q$, such that
    \begin{equation}
    |	\partial_z m^+_1(z)| \le  F_n\left( (1+|x|)^{n+1} ||q-1||_{L^{1,n+1}(x,\infty)}   \right), \; z \in P.
    \end{equation}
   \end{lemma}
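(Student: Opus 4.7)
The plan is to revisit the Volterra integral equation \eqref{mn1} for $m_1^+$ and carry out an iteration whose kernel is carefully regularized at the exceptional points $z=\pm 1$. The delicate point is that $Y_+^{-1}$ has a simple pole at $z=\pm 1$, since $\det Y_+=1-z^{-2}$; nevertheless, the full kernel $K(z;x,y)=Y_+ e^{-i\zeta(z)(x-y)\sigma_3}Y_+^{-1}$ stays bounded on any compact neighborhood $P$ of $\{-1,1\}$ in $\overline{\mathbb{C}^-}\setminus\{0\}$. Indeed, expanding $e^{-i\zeta(z)(x-y)\sigma_3}-I$ in powers of $\zeta$ and using the identity $\zeta(z)=(1-z^{-2})z/2$, each factor $1/(1-z^{-2})$ produced by $Y_+^{-1}$ is cancelled by one power of $\zeta(z)$ at the cost of one extra factor of $(x-y)$ from the exponential. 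A short algebraic calculation then gives the pointwise bound $\sup_{z\in P}|K(z;x,y)|\lesssim \langle y-x\rangle$.

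With this regularization in hand, I iterate \eqref{mn1} for the first column of $m^+$: each subsequent iterate picks up one factor of $K(z;x,y)\,\Delta L_+(z;y)$, whose magnitude is bounded by $\langle y-x\rangle|q(y)-1|$ uniformly in $z\in P$. A standard induction on $k$ shows that the $k$-th iterate is dominated by
\[
\frac{c^k}{k!}\Bigl(\int_x^\infty\langle y-x\rangle|q(y)-1|\,dy\Bigr)^k,
\]
so the tail of the Neumann series is controlled by $e^{c\int_x^\infty\langle y-x\rangle|q-1|\,dy}$ times the first iterate. Using $\langle y-x\rangle\le \langle y\rangle + \langle x^-\rangle$ for $y\ge x$ and integrating against $|q-1|\in L^{1,1}$ produces the claimed $\langle x^-\rangle\|q-1\|_{L^{1,1}(x,\infty)}$ upper bound, with the $\langle x^-\rangle$ factor arising only from the shift needed when $x<0$.

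The bound on $\partial_z m_1^+$ is obtained by differentiating \eqref{mn1} in $z$ and repeating the same Volterra iteration. Each $z$-derivative applied to the exponential produces an additional factor of $(x-y)$, and differentiating $Y_+^{-1}$ raises the order of the pole at $z=\pm 1$; repeating the cancellation argument once more absorbs each new pole against an extra factor of $\zeta(z)(x-y)$, which is precisely why one has to strengthen the hypothesis to $q-1\in L^{1,n+1}$. Local Lipschitz continuity of the map $q\mapsto m_1^+(z;\cdot)$ in the stated topology is then a standard consequence: the difference $m_1^+-\tilde m_1^+$ solves a Volterra equation whose source is $(\Delta L_+-\Delta\tilde L_+)\tilde m_1^+$, and the uniform bound on $\tilde m_1^+$ already established yields an estimate proportional to $\|q-\tilde q\|_{L^{1,1}(x,\infty)}$.

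The main obstacle I expect is the bookkeeping at the exceptional points $z=\pm 1$: one must systematically pair each $1/(1-z^{-2})$ arising from $Y_+^{-1}$ (and its $z$-derivatives) with a compensating power of $\zeta(z)$, and then redistribute the excess $(x-y)$-weight against the decay hypothesis on $q-1$, all uniformly in $z\in P$. Once this cancellation structure is set up, the remainder is the classical Volterra machinery used for the smooth part of the spectrum, already invoked in the proof of Lemma \ref{sin1}.
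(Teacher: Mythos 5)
Your proposal is correct and follows essentially the same route the paper relies on (the paper itself gives no proof, deferring to Cuccagna--Jenkins): the cancellation you describe --- pairing each factor $1/(1-z^{-2})$ from $Y_+^{-1}$ against a power of $\zeta(z)$ from the expansion of the exponential, at the cost of a factor $(x-y)$ absorbed by the $L^{1,1}$ (resp.\ $L^{1,n+1}$) weight --- is exactly the mechanism visible in the paper's displayed limiting equation $m^\pm(\pm 1;x)=Y_\pm + \int_{\pm\infty}^{x}[I+(x-y)L_\pm]\Delta L_\pm m^\pm\,\mathrm{d}y$, after which the Volterra/Neumann iteration and the Lipschitz estimate for differences proceed as you indicate.
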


   The next lemma considers the asymptotics of the Jost functions as $z \to \infty$ and $z \to 0$.
    \begin{lemma}
    	Assume that $q(x) \in \tanh (x)+ L^1(\mathbb{R})$ and $q'(x)\in W^{1,1}(\mathbb{R})$.
    	Then $m^\pm_1(z)$ and $m^\pm_2(z)$ have the following asymptotic behaviors:
    	\begin{align*}
    	m^\pm_1(z) &= e_1 +  \mathcal{O} \left(   z^{-1} \right); \    \  m^\pm_2(z) = e_2 +   \mathcal{O} \left(   z^{-2} \right),  z \to \infty,\nonumber\\
    	m^\pm_1(z) &= \pm \frac{e_2}{z} +\mathcal{O}(1);  \ \  m^\pm_2(z)  = \pm \frac{e_1}{z} +\mathcal{O}(1),  \  \; z\to 0.
    	\end{align*}
    	 There exists an increasing function $F_n(t)$ for $q-\tanh(x) \in L^{1,n}(\mathbb{R})$ such that
       \begin{equation}
        |	\partial_z^j m^+_1(z)| \le |z|^{-1} F_n\left( (1+|x|)^n ||q-1||_{L^{1,n}(x,\infty)}   \right), \; 0\le j \le n,\; z \to \infty.
        \end{equation}
       For any fixed two potential $q$ and $\tilde{q}$ which are sufficiently close to the other, we find
    	\begin{equation}
    	|\partial_z^n \left( m_1^+(z;x)-   \tilde{m}_1^+(z;x) \right)| \le |z|^{-1}||q-\tilde{q}||_{L^{1,n}(x,\infty)} F_n \left( \left( 1+|x| \right)^n||q-1||_{L^{1,n}(x,\infty)} \right),
    	\end{equation}
    	where $0 \le j \le n$.

    \end{lemma}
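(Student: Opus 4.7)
The proof rests on the Volterra integral representation \eqref{mn1}, together with the simple identity $Y_\pm = I \pm \sigma_1/z$: the Jost functions inherit their asymptotic skeletons from $Y_\pm$, which tends to $I$ as $z \to \infty$ and behaves like $\pm\sigma_1/z$ as $z \to 0$. These two regimes drive the two families of asymptotic formulas, and the derivative estimates come from controlled differentiation of the same integral equation.

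For the behaviour at $z = \infty$ I would substitute the formal Laurent ansatz $m^\pm(z;x) = I + z^{-1} M_1(x) + z^{-2} M_2(x) + \cdots$ into \eqref{mn1}. The $O(1)$ matching is automatic since $Y_\pm = I + O(z^{-1})$ and the integrand in \eqref{mn1} picks up one power of $z^{-1}$ through the $Y_\pm^{-1}$ prefactor; absolute convergence of the first correction is guaranteed by $q - \tanh \in L^1$. The improvement from $O(z^{-1})$ to $O(z^{-2})$ for the second column follows from the off-diagonal form of $\Delta L_\pm = i\sigma_3(Q - Q_\pm)$: the relevant entries of $M_1$ are oscillatory integrals which, after one integration by parts using $q' \in W^{1,1}$, gain an additional factor of $z^{-1}$ with vanishing boundary contribution.

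At $z = 0$ the inhomogeneous term $Y_\pm$ itself is singular, so I would extract the pole before iterating. Writing
\[
m^\pm(z;x) = \pm \frac{\sigma_1}{z} + \widetilde{m}^\pm(z;x),
\]
and inserting this into \eqref{mn1} transforms the equation for $\widetilde{m}^\pm$ into a Volterra equation whose inhomogeneity is $O(1)$ and whose kernel is uniformly controlled on a punctured neighbourhood of $0$ that avoids $\pm 1$. A standard Picard iteration then yields $\widetilde{m}^\pm = O(1)$, which is precisely the claimed $m_1^\pm = \pm e_2/z + O(1)$ and $m_2^\pm = \pm e_1/z + O(1)$.

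For the derivative estimates $|\partial_z^j m_1^+(z)| \le |z|^{-1} F_n(\cdots)$, I would differentiate \eqref{mn1} $j$ times and argue by induction. Each $\partial_z$ applied to $e^{\pm i\zeta(z)(x-y)\sigma_3}$ produces a factor $(x - y)\zeta'(z)$, the weight $\langle y \rangle^n$ coming from $q - \tanh \in L^{1,n}$ absorbs the powers of $(x - y)$, and the prefactor $|z|^{-1}$ arises from expanding $Y_\pm^{-1}$ at infinity. The Lipschitz bound is obtained in the same way by subtracting the integral equations for $m_1^+$ and $\widetilde{m}_1^+$ and running Picard iteration on the difference. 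The principal obstacle is book-keeping: one must verify at each inductive step that the iterated integrals accumulate norms of $q - \tanh$ in a form compatible with a single monotone bounding function $F_n$ independent of $q$. This is handled by the Gronwall-type argument already deployed in Lemma \ref{sin1}, combined with the explicit pole extraction at $z = 0$ described above.
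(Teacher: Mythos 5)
First, a point of reference: the paper does not prove this lemma at all --- it is one of the properties imported verbatim from \cite{CJ} (``here we just list their properties, for details, see \cite{CJ}''). Your architecture --- iterate the Volterra equation (\ref{mn1}), read the skeleton off $Y_\pm=I\pm\sigma_1/z$, and control $z$-derivatives by differentiating the integral equation with a Gr\"onwall/Neumann-series bookkeeping --- is the standard one and matches what Cuccagna--Jenkins actually do. Note also that the $z\to 0$ statements follow immediately from the $z\to\infty$ ones through the symmetry of Lemma \ref{sym1}, which in the $m$-variables reads $m_1^\pm(z)=\pm z^{-1}m_2^\pm(z^{-1})$ and $m_2^\pm(z)=\pm z^{-1}m_1^\pm(z^{-1})$; this is cleaner than re-running Picard iteration with the pole extracted, though your pole-extraction argument is also viable.

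The genuine gap is in your treatment of the clause $m_2^\pm=e_2+\mathcal{O}(z^{-2})$. You attribute the extra order of decay to one integration by parts ``with vanishing boundary contribution.'' Two things go wrong. First, the inhomogeneous term already carries the explicit, non-oscillatory vector $\pm e_1/z$ (the second column of $Y_\pm$), which no integration by parts touches. Second, integrating $\int_{\pm\infty}^{x}e^{2i\zeta(z)(x-y)}f(y)\,\mathrm{d}y$ by parts leaves the boundary term $f(x)/(2i\zeta)$ at $y=x$; only the endpoint at $\pm\infty$ vanishes. Substituting the formal expansion into (\ref{mt1}) shows that the $z^{-1}$ coefficient of the second column has first entry $\bar q(x)$ --- generically nonzero, exactly as forced by the reconstruction formula (\ref{reconst}) together with the symmetry $M=\sigma_1\overline{M(\bar z)}\sigma_1$, and as forced by $m_2^\pm(z)=\pm z^{-1}m_1^\pm(z^{-1})$ combined with the (merely) $\mathcal{O}(1)$ error of $m_1^\pm$ at the origin. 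So the correct statement is $m_2^\pm=e_2+\mathcal{O}(z^{-1})$ (the $\mathcal{O}(z^{-2})$ in the lemma is evidently a typo), and your mechanism for the stronger claim cannot be repaired. A related bookkeeping error affects the derivative bounds: the prefactor $|z|^{-1}$ cannot ``arise from expanding $Y_\pm^{-1}$,'' since $Y_\pm^{-1}=I+\mathcal{O}(z^{-1})$, and each $\partial_z$ applied to the phase produces $(x-y)\zeta'(z)=\mathcal{O}(x-y)$ with $\zeta'(z)\to 1/2$, not a factor of $z^{-1}$. Moreover for $j=0$ the bound as stated is incompatible with $m_1^+\to e_1$; the estimate must be read for $\partial_z^j\bigl(m_1^+-e_1\bigr)$, and the $|z|^{-1}$ has to be extracted by integrating the differentiated oscillatory integral by parts (using $q'\in W^{1,1}$), a step your sketch does not perform.
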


The Jost functions  have the following symmetry.
    \begin{lemma}\label{sym1}
    	Suppose that $q \in \tanh(x)+L^1(\mathbb{R})$. For $z\in \mathbb{C}  \backslash \{-1,0,1\}$, we find
    	\begin{equation}
    	\psi_\pm(z;x)=\sigma_1 \overline{\psi_\pm(\bar{z};x)}\sigma_1=\pm z^{-1} \psi^\pm(z^{-1};x)\sigma_1.
    	\end{equation}
    	The above symmetry is expanded as follows according to matrix columns
    	\begin{equation}
    	\psi^\pm_1(z)=\sigma_1 \overline{\psi^\pm_2(\bar{z})}=\pm z^{-1} \psi^\pm_2(z^{-1}),\;
    	\psi^\pm_2(z)=\sigma_1 \overline{\psi^\pm_1(\bar{z})}=\pm z^{-1} \psi^\pm_1(z^{-1}).
    	\end{equation}
    	
    \end{lemma}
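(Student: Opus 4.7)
The plan is to use the standard Lax-pair uniqueness argument: for each of the two claimed identities I would verify that both sides satisfy the same $x$-part of the Lax pair and carry the same normalization as $x\to\pm\infty$, and then invoke uniqueness of the Volterra iteration from Lemma \ref{sin1} to conclude. Before doing anything else, I would record the elementary symmetries of the building blocks. Since $q$ is scalar, $\sigma_1 \overline{Q}\sigma_1 = Q$. Since $\lambda(z)=(z+z^{-1})/2$ and $\zeta(z)=(z-z^{-1})/2$, one has $\overline{\lambda(\bar z)}=\lambda(z)$, $\lambda(z^{-1})=\lambda(z)$, $\overline{\zeta(\bar z)}=\zeta(z)$, $\zeta(z^{-1})=-\zeta(z)$, hence $\overline{\theta(\bar z)}=\theta(z)$ and $\theta(z^{-1})=-\theta(z)$. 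For $Y_\pm(z)=I\pm\sigma_1/z$, direct computation yields $\overline{Y_\pm(\bar z)}=Y_\pm(z)=\sigma_1 Y_\pm(z)\sigma_1$ and the key algebraic identity $Y_\pm(z^{-1})\sigma_1 = \pm z\,Y_\pm(z)$.

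For the Schwarz reflection $\psi^\pm(z;x)=\sigma_1\overline{\psi^\pm(\bar z;x)}\sigma_1$, I would start from $\psi^\pm_x = i\sigma_3(Q-\lambda(z)I)\psi^\pm$, take complex conjugates, and replace $z\mapsto\bar z$. Conjugation by $\sigma_1$ on both sides, together with $\sigma_1\sigma_3\sigma_1=-\sigma_3$ and $\sigma_1\overline{Q}\sigma_1=Q$, shows that $\tilde\psi(z;x):=\sigma_1\overline{\psi^\pm(\bar z;x)}\sigma_1$ again solves the $x$-part of the Lax pair. Its behavior as $x\to\pm\infty$ equals $Y_\pm(z)e^{-it\theta(z)\sigma_3}$ by the symmetries of $Y_\pm$ and $\theta$ listed above, which matches the normalization of $\psi^\pm(z;x)$; uniqueness then yields the first identity.

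For the involution $\psi^\pm(z;x)=\pm z^{-1}\psi^\pm(z^{-1};x)\sigma_1$, the main observation is that the coefficient $L(z)=i\sigma_3(Q-\lambda(z)I)$ is invariant under $z\mapsto z^{-1}$, so $\psi^\pm(z^{-1};x)$ automatically solves the correct ODE; right multiplication by the constant matrix $\sigma_1$ preserves this. For the normalization, $\theta(z^{-1})=-\theta(z)$ gives $e^{-it\theta(z^{-1})\sigma_3}\sigma_1 = \sigma_1 e^{-it\theta(z)\sigma_3}$, and then the identity $Y_\pm(z^{-1})\sigma_1 = \pm z\,Y_\pm(z)$ delivers precisely $Y_\pm(z)e^{-it\theta(z)\sigma_3}$ after multiplying by $\pm z^{-1}$; uniqueness again closes the argument. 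The columnwise formulas in the statement follow immediately by reading off the columns of each matrix identity, using that multiplication by $\sigma_1$ swaps columns.

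The only real obstacle is keeping track of where these manipulations are legitimate: the exponential factor, $Y_\pm$, and the Volterra iteration all degenerate at $z\in\{-1,0,1\}$, which is exactly why those points are excluded from the statement. On $\mathbb{C}\setminus\{-1,0,1\}$ the Jost functions are uniquely determined by their integral equation and, by Lemma \ref{sin1}, are analytic on each half-plane; so once the identities are established for generic $z$ on the real axis they extend by analytic continuation to the relevant domains, which completes the proof.
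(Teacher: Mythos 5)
Your proof is correct and follows the standard route: both sides of each identity solve the same $x$-equation of the Lax pair (using $\sigma_1\overline{Q}\sigma_1=Q$, $\sigma_1\sigma_3\sigma_1=-\sigma_3$, $\lambda(z^{-1})=\lambda(z)$) and share the normalization $Y_\pm(z)e^{-it\theta(z)\sigma_3}$ at $x\to\pm\infty$ (via $\theta(z^{-1})=-\theta(z)$ and $Y_\pm(z^{-1})\sigma_1=\pm z\,Y_\pm(z)$), so uniqueness of the normalized Jost solution gives the result. The paper itself states this lemma without proof, deferring to \cite{CJ}, and your argument is precisely the one used there, including the correct handling of the excluded points $\{-1,0,1\}$ and the column-wise analytic continuation.
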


	\section{A RH problem with  nonzero background }
	   \subsection{Scattering data and reflection coefficient}
	
	  Since $\psi_\pm(z)$ are two matrix solutions of Lax pairs (\ref{laxx})-(\ref{laxt}), there exists a  spectral  matrix
    $$S(z)= \left(\begin{array}{cc}
    s_{11}(z)& 	s_{12}(z)\\
    s_{21}(z)& 	s_{22}(z)
    \end{array} \right),$$
    such that
    \begin{equation}
    \psi_-(z)=\psi_+(z)S(z),
    \end{equation}
    where $s_{ij}(z)(i,j=1,2)$ are scattering data, by which   we define a reflection coefficient
	    \begin{equation}
	    	r(z) := \frac{s_{21}(z)}{s_{11}(z)}.
	    \end{equation}
	    According to the lemma (\ref{sym1}),  the scattering data   have  the following  properties \cite{CJ}
       \begin{lemma}\label{sds11s21}
       	Let $q \in \tanh(x)+L^1(\mathbb{R})$ and $z\in \mathbb{R}  \backslash \{-1,0,1\}$.
       	Then
       	\begin{itemize}
       	    \item The scattering coefficients can be described by the Jost functions as
       	    \begin{equation}
       	    	s_{11}(z)=\frac{\det  \left[\psi_1^-(z;x), \psi_2^+(z;x)\right]  }{1-z^{-2}},\quad        	    	s_{21}(z)=\frac{\det  \left[\psi_1^+(z;x), \psi_1^-(z;x)\right]  }{1-z^{-2}}.
       	    \end{equation}
            \item $|s_{11}(z)|^2=1+|s_{21}(z)|^2 \ge 1$, $z\in \mathbb{R}  \backslash \{-1,0,1 \}$.
            \item $|r(z)|^2=1-|s_{11}(z)|^{-2}<1$, $z\in \mathbb{R}  \backslash \{-1,0,1 \}$.
            \item For $z \in \mathbb{C}^+$,
            \begin{equation}
            	s_{11}(z)=-\overline{s_{11} \left(\bar{z}^{-1}\right)},\; s_{21}(z)=-\overline{s_{21} \left(\bar{z}^{-1}\right)},\; r(z)=\overline{r\left( \bar{z}^{-1}   \right)}.
            \end{equation}
            \item If we add a condition $q' \in W^{1,1}(\mathbb{R})$, then we can find, as $z \in \overline{\mathbb{C}^+}$,
            \begin{align}
            	&\lim_{z \to \infty} \left(s_{11}(z)-1 \right)z= i \int_{\mathbb{R}} \left( |q(x)|^2-1\right) \, \mathrm{d}x,\\
            	&\lim_{z \to 0} \left(s_{11}(z)+1 \right)z^{-1}= i \int_{\mathbb{R}} \left( |q(x)|^2-1\right) \, \mathrm{d}x,
            \end{align}
            and as $z \in \mathbb{R}$
       \begin{equation}
       	|s_{21}(z)|=\begin{cases}
       		\mathcal{O}\left( |z|^{-2}\right),\; |z|\to \infty, \\
       		\mathcal{O}\left( |z|^{2}\right),\; |z|\to 0,
       	\end{cases} \quad r(z) \sim \begin{cases}
       	z^{-2},\; |z|\to \infty, \\
       	0,\; |z|\to 0.
       \end{cases}
       \end{equation}
          \item  $s_{11}(z)$ and $s_{21}(z)$ both have simple poles at points $z=\pm 1$ and their residues at these points are proportional. Then, $z=\pm1$ are the removable poles of $r(z)$.
          \begin{equation}
          	s_{11}(z)=\frac{s_\pm}{z\mp1}+\mathcal{O}(1),\; s_{21}(z)=\mp \frac{s_\pm}{z\mp1}+\mathcal{O}(1),\; \lim_{z \to \pm 1}r(z)=\mp 1,
          \end{equation}
         where $s_\pm$=$\frac{1}{2} \det \left[ \psi_1^-(\pm 1;x), \psi_2^+(\pm 1;x)  \right]$.

  	\end{itemize}
         \end{lemma}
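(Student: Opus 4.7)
The plan is to derive each of the six items from the defining relation $\psi_-(z) = \psi_+(z)S(z)$ together with the symmetries collected in Lemma \ref{sym1} and the analytic/asymptotic information supplied by Lemmas \ref{sin1}--2.3. The starting observation is that the trace of $L$ in \eqref{laxx} vanishes, so $\det\psi_\pm(z;x) = \det Y_\pm = 1-z^{-2}$ is $x$-independent, whence $\det S(z)\equiv 1$. Writing $\psi_-(z)=\psi_+(z)S(z)$ column by column and applying Cramer's rule immediately yields
\[
s_{11}(z)\,(1-z^{-2}) = \det\bigl[\psi_1^-(z;x),\psi_2^+(z;x)\bigr], \qquad s_{21}(z)\,(1-z^{-2}) = \det\bigl[\psi_1^+(z;x),\psi_1^-(z;x)\bigr],
\]
establishing the first bullet and showing incidentally that the right-hand sides are $x$-independent.

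For the modulus identity and the bound on $|r|$: the conjugation symmetry in Lemma \ref{sym1} implies $S(z)=\sigma_1\overline{S(\bar z)}\sigma_1$ on $\mathbb{R}\setminus\{-1,0,1\}$, so $s_{22}=\overline{s_{11}}$ and $s_{12}=\overline{s_{21}}$. Combined with $\det S=1$ this gives $|s_{11}|^2-|s_{21}|^2=1$, from which $|r|^2=1-|s_{11}|^{-2}<1$ follows at once. For the symmetry under $z\mapsto \bar z^{-1}$, I will concatenate the two symmetries of Lemma \ref{sym1}: the inversion $\psi^\pm(z)=\pm z^{-1}\psi^\pm(z^{-1})\sigma_1$ composed with conjugation produces $\psi_\pm(z)=\mp z^{-1}\sigma_1\overline{\psi_\pm(\bar z^{-1})}$ (analytically continued to $\overline{\mathbb{C}^+}$ using that $z\in\mathbb{C}^+$ implies $\bar z^{-1}\in\mathbb{C}^+$), and substituting into the Cramer formulas for $s_{11}$ and $s_{21}$ gives the stated identities $s_{11}(z)=-\overline{s_{11}(\bar z^{-1})}$, $s_{21}(z)=-\overline{s_{21}(\bar z^{-1})}$ and hence $r(z)=\overline{r(\bar z^{-1})}$.

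For the limits as $z\to\infty$ and $z\to 0$, I insert the asymptotic expansions of $m_1^-$ and $m_2^+$ from Lemma 2.3 into the Cramer formula for $s_{11}$. Iterating the Volterra integral equation \eqref{mn1} once and reading off the $z^{-1}$ coefficient of the diagonal entry produces exactly $i\int_{\mathbb{R}}(|q(x)|^2-1)\,dx$; the extra hypothesis $q'\in W^{1,1}(\mathbb{R})$ is what justifies differentiating under the integral and swapping the order of the limit in $z$ and the $x$-integration. The $z\to 0$ expansion then follows by applying the inversion symmetry already established, which also yields the stated $|z|^{\pm 2}$ decay rates of $|s_{21}|$ and $r$.

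The most delicate item is the pole structure at $z=\pm 1$. Since $1-z^{-2}$ has a simple zero at each of these points with $\mathrm{Res}_{z=\pm 1}(1-z^{-2})^{-1}=\pm\tfrac{1}{2}$, and since $\theta(\pm1)=0$ so that $\psi_j^\pm(\pm 1;x)=m_j^\pm(\pm 1;x)$ is bounded by Lemma 2.2, both $s_{11}$ and $s_{21}$ inherit simple poles there. The main obstacle is matching the two residues so as to obtain $\lim_{z\to\pm 1} r(z)=\mp 1$, because a priori $r$ would look like $0/0$. The clean way to do this is to pass to the limit $z\to\pm 1$ in the inversion symmetry $\psi_1^+(z)=\pm z^{-1}\psi_2^+(z^{-1})$, which at the branch point collapses the two Jost columns into the single relation $\psi_1^+(\pm 1;x)=\pm\psi_2^+(\pm 1;x)$. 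Substituting this into the determinant formula for $s_{21}$ gives $\det[\psi_1^+,\psi_1^-]_{z=\pm 1}=\mp\det[\psi_1^-,\psi_2^+]_{z=\pm 1}$, identifying the residue of $s_{21}$ as $\mp s_\pm$ with $s_\pm=\tfrac{1}{2}\det[\psi_1^-(\pm 1;x),\psi_2^+(\pm 1;x)]$, and in particular forcing $r(\pm 1)=\mp 1$ as claimed.
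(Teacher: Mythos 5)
Your proof is correct and follows essentially the same route that the paper relies on (the paper states this lemma without proof, deferring to Cuccagna--Jenkins \cite{CJ}): Wronskian representations obtained from the constancy of $\det\psi_\pm=1-z^{-2}$ plus Cramer's rule, the two symmetries of Lemma \ref{sym1} to get $|s_{11}|^2-|s_{21}|^2=1$ and the $z\mapsto\bar z^{-1}$ identities, one iteration of the Volterra equation \eqref{mn1} for the $\mathcal{O}(z^{-1})$ coefficient together with the inversion symmetry for the $z\to0$ statements, and the collapse $\psi_1^+(\pm1)=\pm\psi_2^+(\pm1)$ at the branch points to match the residues and force $r(\pm1)=\mp1$. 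The only cosmetic slip is the sign in your intermediate relation $\psi_\pm(z)=\mp z^{-1}\sigma_1\overline{\psi_\pm(\bar z^{-1})}$ (composing the two symmetries of Lemma \ref{sym1} gives $\pm$ rather than $\mp$), but since only the relative sign between $\psi_+$ and $\psi_-$ enters $S=\psi_+^{-1}\psi_-$, all of your final identities are unaffected.
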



  We consider the  discrete spectrum relating to the initial value $q_0$, which is formed by a finite number of zeros of
  $s_{11}(z)$.    Suppose that $z_j \in\mathbb{C}^+(j=1,\cdots,N)$ are the zeros of  $s_{11}(z)$. Then,  from the symmetries,
   we know $\bar{z}_j \in \mathbb{C}^-(j=1,\cdots,N) $ are the corresponding zeros of  $s_{22}(z)$.
The correlation $\mathcal{D}=\left\{ r(z) ,\left(z_j,c_j\right)^{N}_{j=1} \right\}$ is called the scattering data associated with the initial value $q_0$.
 With the above lemmas, we can prove
      \begin{lemma}
      	Let $q(x) \in \tanh(x)+L^{1,2}(\mathbb{R})$ and $q'(x)\in W^{1,1}(\mathbb{R})$. Then we have
      	\begin{itemize}
            \item $r(z)\in H^{1}(\mathbb{R})$.
            \item  If $q_0\in \tanh(x)+H^{2,2}(\mathbb{R})$, the reflection coefficient meets
       	 \begin{equation}
       	 	||\log(1-|r|^2)||_{L^p(\mathbb{R})} < \infty, \quad p\ge1.
       	 \end{equation}
      		\item $s_{11}(z)$ has no spectral singularity on the real axis and its zeros are simple, finite and distributed on the unit circle. For convenience, we define the corresponding discrete spectrum set
      		\begin{align*}
      			&\mathcal{Z}^+ = \left\{z_j \in \mathbb{C}^+: s_{11}(z_j)=0, z_j=e^{i w_n}, 0 < w_j<\pi      \right\},\\
      			&\mathcal{Z}^- = \left\{\bar{z}_j \in \mathbb{C}^-: s_{22}(\bar{z}_j)=0, \bar{z}_j=e^{i w_j}, -\pi < w_j<0      \right\}.
      		\end{align*}
      	\end{itemize}
    	
      \end{lemma}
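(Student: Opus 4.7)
The overall strategy is to deduce each item from the Wronskian representations $s_{11}(z)=\det[\psi_1^-,\psi_2^+]/(1-z^{-2})$ and $s_{21}(z)=\det[\psi_1^+,\psi_1^-]/(1-z^{-2})$, together with the decay and singularity estimates of the Jost functions proved in the previous three lemmas and the symmetry $s_{11}(z)=-\overline{s_{11}(\bar z^{-1})}$ recorded in Lemma~\ref{sds11s21}.

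For $r\in H^1(\mathbb{R})$, one first notes $r\in L^\infty$ from $|r|^2=1-|s_{11}|^{-2}<1$, while $r\in L^2$ follows from the pointwise decay $|s_{21}(z)|=\mathcal{O}(|z|^{-2})$ as $|z|\to\infty$ and $\mathcal{O}(|z|^{2})$ as $|z|\to 0$ provided by Lemma~\ref{sds11s21}. For the derivative, I would differentiate $r=s_{21}/s_{11}$ using the Wronskian formulas above; the bounds on $\partial_z m^\pm_j$ under the hypotheses $q-\tanh\in L^{1,2}$ and $q'\in W^{1,1}$ furnished by Lemma~\ref{sin1} and its two successors produce an $L^2$ estimate for the numerator after cancelling the removable poles at $z=\pm 1$, and the matching decay of $s_{21}'(z)$ at $0$ and $\infty$ then yields $r'\in L^2$.

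For the logarithmic $L^p$ bound, set $f(z):=1-|r(z)|^2=|s_{11}(z)|^{-2}$. On any compact subset of $\mathbb{R}\setminus\{-1,0,1\}$, $r$ is continuous with $|r|<1$, so $\log f$ is bounded. As $|z|\to\infty$, $|r|=\mathcal{O}(|z|^{-2})$ gives $f\to 1$ and $\log f=\mathcal{O}(|z|^{-4})$, integrable in every $L^p$. Near $z=\pm 1$, the upgraded hypothesis $q_0-\tanh\in H^{2,2}$ lifts $r$ locally to $H^2$; combined with $r(\pm 1)=\mp 1$, a second-order Taylor expansion produces $1-|r(z)|^2 = c_\pm(z\mp 1)^2 + \mathcal{O}((z\mp 1)^3)$ with $c_\pm>0$, so that $\log f$ has only a logarithmic singularity at $\pm 1$, which belongs to $L^p_{\mathrm{loc}}$ for every finite $p$.

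For the third item, the absence of spectral singularities is immediate from $|s_{11}(z)|^2\ge 1+|s_{21}(z)|^2\ge 1$ on $\mathbb{R}\setminus\{-1,0,1\}$. Since $s_{11}$ is analytic in $\mathbb{C}^+$, continuous on its closure away from $0$, and tends to $1$ at infinity while $|s_{11}|\ge 1$ on $\mathbb{R}$, its zero set is a discrete bounded subset of $\mathbb{C}^+$, hence finite; the standard Wronskian identity expressing $s_{11}'(z_j)$ as an $L^2$-pairing of the bound-state eigenfunctions yields simplicity. Finally, the symmetry pairs each zero $z_j\in\mathbb{C}^+$ with $1/\bar z_j\in\mathbb{C}^+$, and since the defocusing bound-state spectral parameter $\lambda_j=(z_j+z_j^{-1})/2$ must be real, writing $z_j=r_je^{i\omega_j}$ gives $\mathrm{Im}\,\lambda_j=\tfrac12(r_j-r_j^{-1})\sin\omega_j=0$ with $\omega_j\in(0,\pi)$, forcing $r_j=1$. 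The main obstacle will be the sharp Taylor analysis of $1-|r|^2$ at $z=\pm 1$: the quadratic vanishing has to be extracted from a careful tracking of the removable Jost-function singularities under the stronger hypothesis $q_0-\tanh\in H^{2,2}$, and it is precisely here that the upgraded regularity in the second bullet is genuinely needed.
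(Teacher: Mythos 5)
The paper states this lemma without proof, deferring to Cuccagna--Jenkins \cite{CJ}, so there is no in-text argument to compare against; your proposal follows the standard route and its overall architecture is sound. Three remarks. First, your treatment of the singularity of $\log(1-|r|^2)$ at $z=\pm1$ is more roundabout than necessary and contains an unproved claim: you assert $c_\pm>0$ in the expansion $1-|r(z)|^2=c_\pm(z\mp1)^2+\mathcal{O}((z\mp1)^3)$, but nothing in your argument rules out $c_\pm=0$. The clean way is to use $1-|r(z)|^2=|s_{11}(z)|^{-2}$ together with the simple-pole expansion $s_{11}(z)=s_\pm(z\mp1)^{-1}+\mathcal{O}(1)$ already recorded in Lemma \ref{sds11s21}: this gives $1-|r(z)|^2=|s_\pm|^{-2}|z\mp1|^2\left(1+o(1)\right)$ at once, hence $\log(1-|r|^2)=2\log|z\mp1|+\mathcal{O}(1)$ locally, which lies in $L^p_{\mathrm{loc}}$ for every finite $p$ (and note the claim can only hold for $1\le p<\infty$, since the logarithm is unbounded there). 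Second, in the finiteness argument, ``discrete and bounded in $\mathbb{C}^+$'' does not by itself exclude an infinite sequence of zeros accumulating on the real axis; you must invoke, as you only partly do, the continuity of $s_{11}$ up to $\overline{\mathbb{C}^+}\setminus\{0,\pm1\}$ together with $|s_{11}|\ge1$ on $\mathbb{R}\setminus\{0,\pm1\}$, the limits $s_{11}(0)=-1$ and $s_{11}(\infty)=1$, and the blow-up at $\pm1$, so that no boundary point can be an accumulation point. Third, the reality of the bound-state parameter $\lambda_j=(z_j+z_j^{-1})/2$, which underlies your unit-circle computation, rests on the self-adjointness of the defocusing Zakharov--Shabat operator and should be stated explicitly; granted that, your calculation $\mathrm{Im}\,\lambda_j=\tfrac12(r_j-r_j^{-1})\sin\omega_j=0$ with $\omega_j\in(0,\pi)$ correctly forces $|z_j|=1$.
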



	  Moreover, we can get the trace formula
	  \begin{equation}
	  	s_{11}(z)= \prod_{j=1}^{N} \frac{z-z_j}{z-\bar{z}_j} \exp\left(-i \int_{\mathbb{R}} \frac{v(s)}{s-z} \, \mathrm{d}s\right),
	  \end{equation}
	where $z_j \in \mathcal{Z}^+ $ and
	  \begin{equation}
	  	v(s)=-\frac{1}{2 \pi} \log \left( 1-|r(s)|^2\right).
	  \end{equation}

	   \subsection{RH formulism   of the  initial value problem }

Define
	   \begin{equation}
	   	M(z)=M(z;x,t):=\begin{cases}
	   		\left(\frac{m_1^-(z;x,t)}{s_{11}(z)}, m_2^-(z;x,t)  \right), \; z\in \mathbb{C}^+,\\[4pt]
	   		\left(m_1^+(z;x,t), \frac{m_2^-(z;x,t)}{\overline{s_{11}(\bar{z})}}  \right), \; z\in \mathbb{C}^-,
	   	\end{cases}
	   \end{equation}
       then  it is easy to prove that $M(z)$ satisfies the following RH problem.

      \begin{figure}
      	\begin{center}
      		\begin{tikzpicture}[node distance=2cm]
      			\draw[->](-4,0)--(4,0)node[black,right]{Re$z$};
      			\foreach \x [count=\p] in {0,...,11} {
      		\node[shape=circle,fill=red, scale=0.25] (\p) at (-\x*30:2) {};};
      			\foreach \x [count=\p] in {0,...,5} {
      				\draw (-\x*60:2.4) ;
      				\draw (-30-\x*60:2.4) ;};
      			\node[shape=circle,fill=blue, scale=0.15]  at (0:2){0};
      			\node[shape=circle,fill=blue, scale=0.15]  at (-6*30:2){0} ;
      		\node[shape=circle,fill=blue,scale=0.15] at (0,0) {0};
      			\node[below] at (2.1,0) {$1$};
      			\node[below] at (0,0) {$0$};
      			\node[below] at (-2.2,0) {$-1$};
      			\node[right] at (1*30:2) {$z_j$};
      			\node[right] at (-1*30:2) {$\bar{z}_j$};
      			\draw [dashed, gray](1) arc (0:360:2);
      			
      		\end{tikzpicture}
      		\caption{ \small{The  jump contours and  poles for  $M(z)$.  The black solid line stands for the jump line. The red dots are poles $z_j, \ \bar z_j  \ (j=1,\cdots, N)$, and the blue dots are singular points $-1, 0 ,1$.}}
      		\label{mjumpp}
      	\end{center}
      \end{figure}
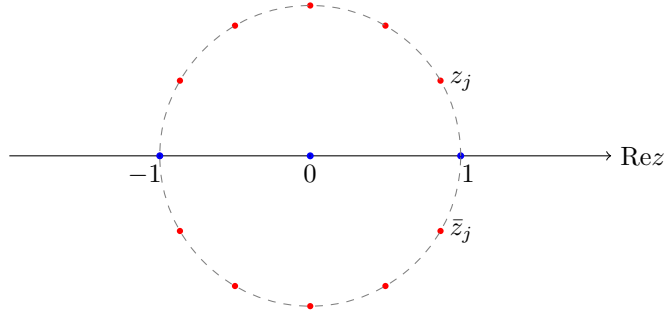

       \noindent\textbf{RHP1}.  Find a matrix-valued function $M(z)$ which satisfies
      \begin{itemize}
      	\item Analyticity: $M(z)$ is meromorphic in $\mathbb{C}\backslash \mathbb{R}$.
      	\item Symmetry: $M(z)=\sigma_1 \overline{M(\bar{z})} \sigma_1=z^{-1} M(z^{-1}) \sigma_1$.
      	\item Jump condition: $M(z)$ satisfies the jump condition
	$$M_+(z)=M_-(z)V(z), \; z \in \mathbb{R},$$
      	where
      	\begin{equation}\label{V0}
      		V(z)=\left(\begin{array}{cc}
      			1-|r(z)|^2 & -e^{-2it \theta(z)}\overline{r(z)}\\
      			e^{2it \theta(z)}r(z) & 1
      		\end{array}\right),
      	\end{equation}
      with  $ \theta(z)=\theta(z;x,t)=\zeta(z) [  {x}/{t}-2 \lambda(z)].$

      	\item Asymptotic behaviors:
      	\begin{align*}
      			&M(z)=I+\mathcal{O}(z^{-1}),	\quad  z \to  \infty,\\
      			&zM(z)=\sigma_1+\mathcal{O}(z), \quad z \to 0.
      	\end{align*}
      	\item Residue conditions: $M(z)$  has simple poles at each points $z_j$ in $\mathcal{Z}^+ \cup \mathcal{Z}^-$ with the following residue conditions:
      	\begin{align}
      		& \res_{z=z_j} M(z)=\lim_{z\to z_j} M(z) \left(\begin{array}{cc}
      			0 & 0\\
     			c_j e^{2it\theta(z_j)} & 0
      		\end{array} \right),\\
        	& \res_{z=\bar{z}_j} M(z)=\lim_{z\to \bar{z}_j} M(z) \left( \begin{array}{cc}
     		0 & \bar{c}_j e^{-2it\theta(\bar{z}_j)}\\
     		0 & 0
      	     \end{array} \right),
      	\end{align}
        where $c_j=\frac{s_{21}(z_j)}{s'_{11}(z_j)}=i z_j |c_j|.$	 The jump contours and poles of $M(z)$ can be seen in Figure \ref{mjumpp}.
	\end{itemize}
       Using the asymptotic property of $M(z)$ at infinity, we can obtain the reconstruction formula for the potential $q(x,t)$
       \begin{equation}\label{reconst}
       	q(x,t)= \lim_{z \to \infty} \left(z M(z;x,t) \right)_{21}.
       \end{equation}

	\section{Normalization of the RH problem}
	 In this section, we make factorizations of the jump matrix $V(z)$ and renormalize the RH problem of $M(z)$ so that it is well-behaved at infinity.
	   \subsection{Jump matrix  factorizations}

            \begin{figure}[htbp]
    	\centering
    	    	\subfigure[$ \xi>1$]{
    		\begin{minipage}[t]{0.32\linewidth}
    			\centering
    			\includegraphics[width=1.5in]{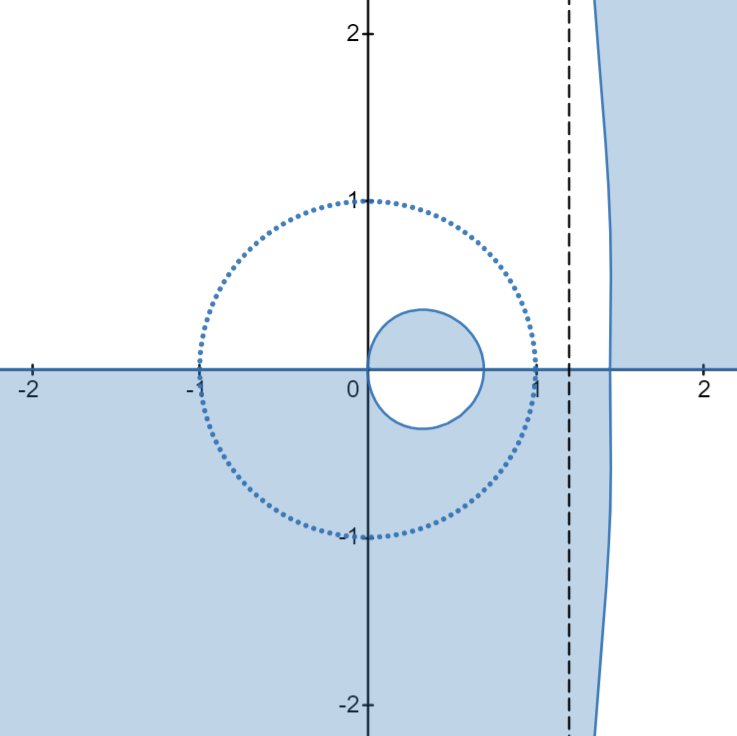}
    		\end{minipage}
    	}%
    	\subfigure[$\xi=1$]{
    		\begin{minipage}[t]{0.32\linewidth}
    			\centering
    			\includegraphics[width=1.5in]{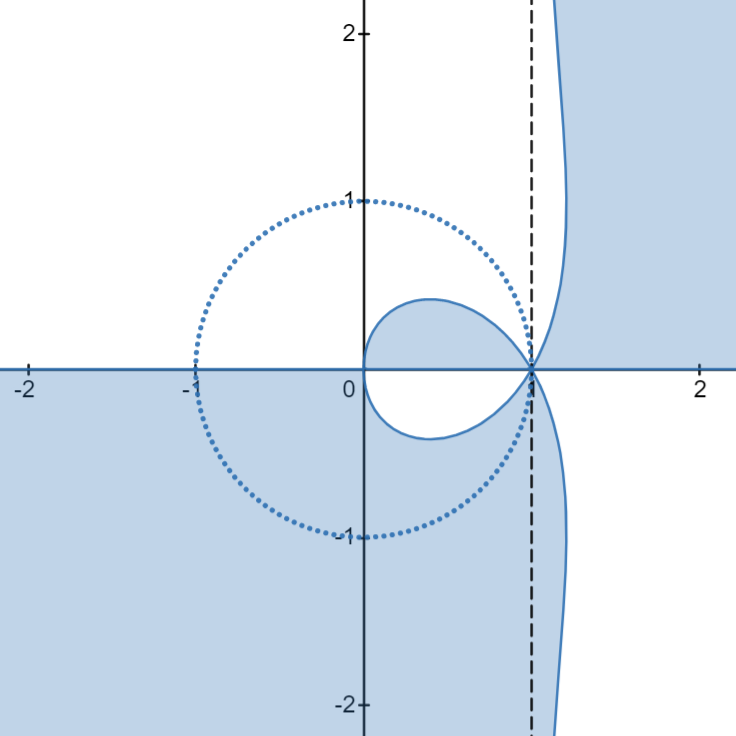}
    		\end{minipage}%
    	}%
    	\subfigure[$0<\xi<1$]{
	\begin{minipage}[t]{0.32\linewidth}
		\centering
		\includegraphics[width=1.5in]{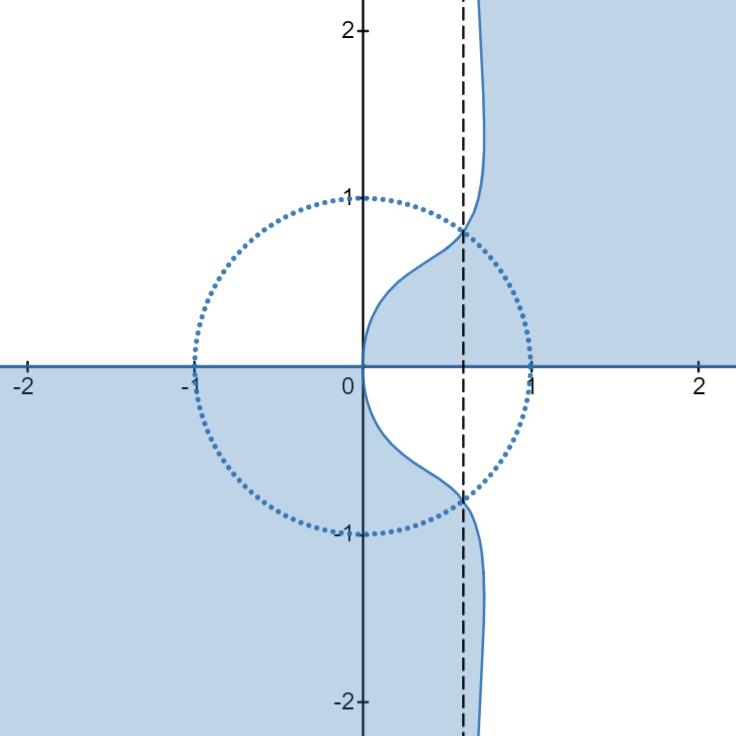}
	\end{minipage}%
}%

    \subfigure[$-1<\xi<0$]{
    	\begin{minipage}[t]{0.32\linewidth}
    		\centering
    		\includegraphics[width=1.5in]{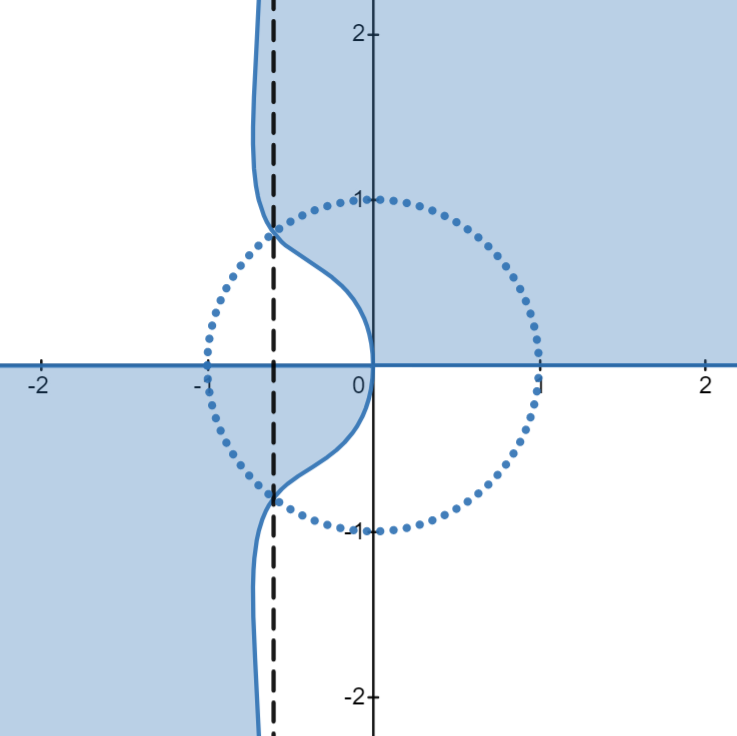}
    	\end{minipage}
    }%
    \subfigure[$\xi=-1$]{
    	\begin{minipage}[t]{0.32\linewidth}
    		\centering
    		\includegraphics[width=1.5in]{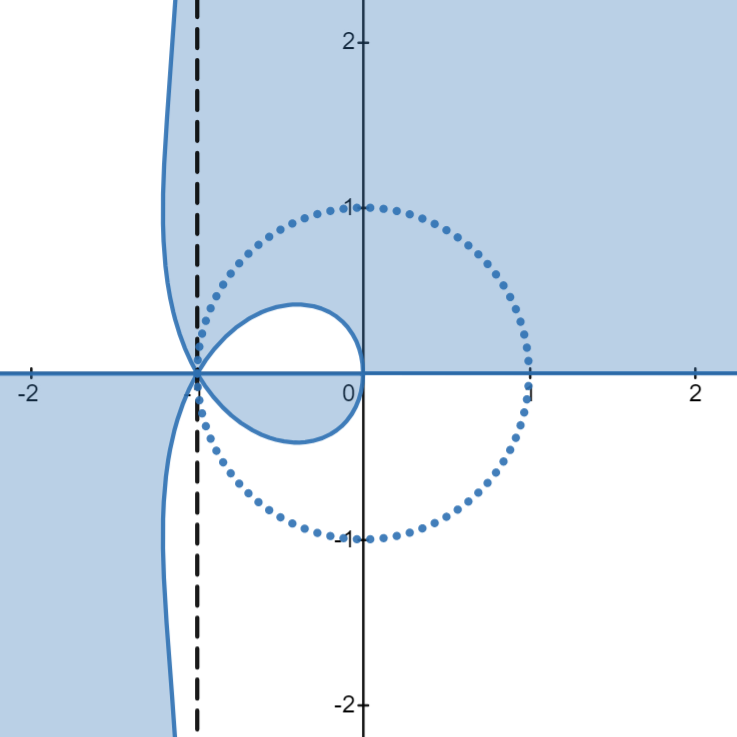}
    	\end{minipage}
    }%
    \subfigure[$\xi<-1$]{
	\begin{minipage}[t]{0.32\linewidth}
		\centering
		\includegraphics[width=1.5in]{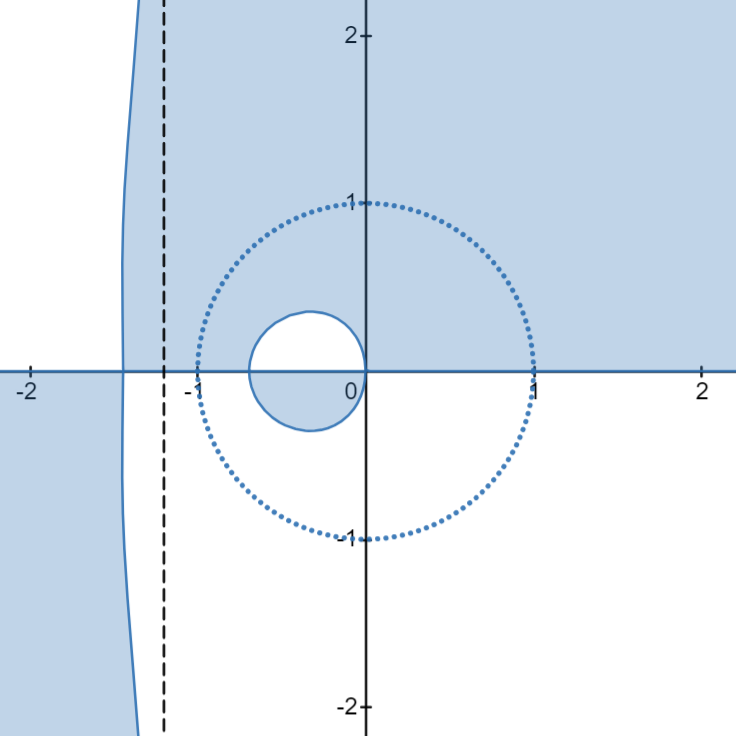}
	\end{minipage}
}%
    	\centering
    	\caption{\small The signature table   of $\re (2i\theta)$.
    In the blue region, we have $\re (2i\theta)>0$, which implies that $e^{-2it\theta} \to 0$ as $t\to \infty$; In the white region,  $\re (2i\theta)<0$,
    which implies that $e^{2it\theta} \to 0$ as $t\to \infty$. }
    	\label{proptheta}
    \end{figure}

    The long-time asymptotic behavior of RH problems is affected by growth and decay  of the  oscillatory term $e^{\pm 2it\theta(z)}$ in the jump matrix $V(z)$.
Direct calculations show that
        \begin{align}
	     \re\left(2it\theta(z)\right) = \xi \im z \left( 1+\frac{1}{\re^2 z+ \im^2 z} \right)- \re z \im z \left( 1 + \frac{1}{\left(\re^2 z+ \im^2 z\right)^2} \right),
        \end{align}
where $\xi:=\frac{x}{2t}$.  It can be found that the sign  of $\re (2i\theta(z))$ changes  with $\xi$.
 The signature table   of $\re( 2 i\theta(z) )$ is shown   in Figure \ref{proptheta}.
\begin{itemize}
\item  For the case $|\xi|<1$, there is no phase point on $\mathbb{R}$ corresponding to the figures (c) and (d),
 which  were discussed by  Cuccagna and Jenkins   \cite{CJ};

\item  For the case  $|\xi|=1$, there is a phase point on $\mathbb{R}$ corresponding to
  the figures (b) and (e), which  are critical cases;

\item For the case $ |\xi|>1$, two phase points appear on $\mathbb{R}$, corresponding to the  figures (a)  and (f),
  which  will be considered in our paper.

\end{itemize}

Next we search for stationary phase points of the  function $\theta(z)$. Direct calculation gives
        \begin{align}
            &2\theta'(z)=\frac{x}{t}\left(1+z^{-2}\right)-(2z+2z^{-3})=-2z^{-1}l(k),\label{2theta}\\
            &2\theta''(z)=2z^{-2}l(s)-2z^{-1}l'(s),\label{theta2}
        \end{align}
        where $l(s)=s^2-\xi s-2,\quad s=z+z^{-1}$.  We find (\ref{2theta}) has two kinds of zeros on $\mathbb{R}$
         \begin{align}
         	&\xi_{k}(\xi)=  \frac{1}{2}\left|  \nu(\xi)+( -1)^k \sqrt{ \nu^2(\xi) -4} \right|,\quad  k=1, 2, \ \ {\rm for} \ \   \xi>1,\label{xi1}\\
         &\xi_{k}(\xi)= -\frac{1}{2}\left|  \nu(\xi) +( -1)^k \sqrt{ \nu^2(\xi) -4} \right|,\quad  k=1, 2, \ \ {\rm for} \ \  \xi<-1,\label{xi2}
          \end{align}
   where $\nu(\xi)=	\frac{1}{2} (|\xi| + \sqrt{\xi^2+8})$.  Based on the symmetry of zeros and the formula (\ref{xi1})-(\ref{xi2}),  we deduce that
        \begin{equation}
           \begin{cases}
            	0<\xi_1(\xi)<1<\xi_2(\xi)\, \quad \xi>1,\\
             	\xi_2(\xi)<-1<\xi_1(\xi)<0\, \quad \xi<-1,    	
          \end{cases}
 \end{equation}
with $\theta''(\xi_1(\xi))>0\, \text{and} \, \theta''(\xi_2(\xi))<0$.


 The jump matrix has the following two kinds of factorizations
	   \begin{equation}
	   	V(z)=        \begin{cases}
	   		\left(\begin{array}{cc} 1&-\bar{r}e^{-2it\theta }\\ 0&1\end{array}  \right)
	   		\left(\begin{array}{cc} 1&0\\   re^{2it\theta }&1\end{array}  \right),\\
	   		\left(\begin{array}{cc} 1 & 0 \\ \frac{r}{1-|r|^2}e^{2it\theta } & 1\end{array}  \right)
	   		\left(\begin{array}{cc} 1-|r|^2&0\\ 0&\frac{1}{1-|r|^2} \end{array}  \right) \left(\begin{array}{cc} 1 &
	   			\frac{-\bar{r}}{1-|r|^2}e^{-2it\theta } \\ 0 & 1\end{array}  \right).
	   	\end{cases}
	   \end{equation}
	   Moreover, according to the signal of $\re (2i\theta(z))$, we use different  factorization  forms of  the jump matrix  for different regions such that the oscillating factor $e^{\pm 2it\theta}$
       is decaying in the corresponding regions respectively.

    \begin{remark}
        Here we only consider the zeros of the function $\theta(z)$ on $\mathbb{R}$, this is because for the zeros which on the complex plane but not on the junp curves $\Sigma$, they have no contribution to the solution of $M(z)$.  If $\xi_k$ is a zero point of $\theta(z)$, then
        \begin{itemize}
            \item   $\xi_k \notin \Sigma$: $M(z)$ is analytic at the point $\xi_k$. $\xi_k$ does not contribute to the solution of $M(z)$ and can therefore be ignored.
            \item   $\xi_k \in \Sigma$: $M(z)$ is not analytic at the point $\xi_k$ and the exponential oscillation terms $e^{\pm 2it\theta}$ in the jump matrix slow down or smooth out around $\xi_k$. The contribution of jumps near $\xi_k$ to the solution of $M(z)$ is dominant.
        \end{itemize}

    \end{remark}

       \subsection{Conjugation}
      We   introduce  a transformation to  deform the jumps on the real axis to a path on which the oscillatory term decays exponentially.
      Denote
       \begin{equation}
       	 I(\xi)= \begin{cases}
       	 	(0,\xi_1) \cup \left(\xi_2,\infty\right), \quad \xi>1,\\
       	 	(0,\infty) \cup \left(\xi_2,\xi_1\right),\quad \xi<-1.
       	 \end{cases}
       \end{equation}
       We define the following function:
       \begin{equation}
        T(z)= \exp \left(  - i \int_{I(\xi)} v(s) \left(\frac{1}{s-z}- \frac{1}{2s} \right) \, \mathrm{d}s \right).\\
        \end{equation}
        \begin{proposition}
            The function $T(z)$ has the following properties:
            \begin{itemize} \label{prop1}
                \item Analyticity: $T(z)$ is analytical  in $\mathbb{C} \backslash I(\xi)$.
                \item Symmetry: $\overline{T(\bar{z})}=T(z)^{-1}=T(z^{-1})$.
                \item Jump condition:\begin{equation}
                    T_+(z)=T_-(z)(1-|r(z)|^2), \quad z\in I(\xi).
                \end{equation}
                \item Asymptotic behavior: Let
                \begin{equation}
                    T(\infty) := \lim_{z \to \infty} T(z)=\exp \left(i \int_{I(\xi)} \frac{v(s)}{2s} \, \mathrm{d}s \right).
                \end{equation}
                Then,  $|T(\infty)|=1$ and the asymptotic expansion at infinity is
                \begin{equation}
                    T(z)=T(\infty)\left( I + \frac{1}{2\pi i z} \int_{I(\xi)} \log (1-|r(s)|^2) \, \mathrm{d}s +\mathcal{O}\left(z^{-2}\right)       \right).
                \end{equation}
            \item Boundedness: The ratio $\frac{s_{11}(z)}{T(z)}$ is holomorphic in $\mathbb{C}^+$ and $\left|\frac{s_{11}(z)}{T(z)} \right|$ is bounded for $z\in \mathbb{C}^+$.
               \item Local properties: For $k=1,2$,
                \begin{equation}\label{Test}
         	|T(z)-T_k(\xi_k)(z-\xi_k)^{ \epsilon_k v(\xi_k) i}|\le c ||r||_{H^1} |z-\xi_k|^{1/2}, \ \epsilon_k=(-1)^{k+1},
                \end{equation}
                 where $z=\xi_k +  \mathbb{R}^+ e^{i\varphi_k}, \, |\varphi_k|<\pi,$ and
                \begin{align}\label{betai}
                	 &T_k(z) =T(\infty)e^{i \beta_k(z;\xi_k)},\; \beta_k(z;\xi_k)=  \epsilon_k  v(\xi_k) \ln (z-\xi_k)+\int_{I(\xi)} \frac{v(s)}{s-z}\, \mathrm{d}s.
               \end{align}


            \end{itemize}
        \end{proposition}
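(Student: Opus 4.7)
The first three properties are essentially formal consequences of the Cauchy-type representation. Analyticity of $T$ in $\mathbb{C}\setminus I(\xi)$ is immediate from the integral formula. The conjugate symmetry $\overline{T(\bar z)}=T(z)^{-1}$ follows from $v(s)\in\mathbb{R}$ by taking the complex conjugate of the exponent. For the reciprocal symmetry $T(z^{-1})=T(z)^{-1}$ I would first note that $\xi_1\xi_2=1$ (because the stationary points come from the $z+z^{-1}=s$ parametrization), so $I(\xi)$ is invariant under $s\mapsto 1/s$, while $v(1/s)=v(s)$ follows from the Schwarz symmetry $r(z)=\overline{r(\bar z^{-1})}$ in Lemma \ref{sds11s21}. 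The substitution $s=1/u$ together with the identity
\begin{equation*}
\frac{1}{1/u-z^{-1}}-\frac{u}{2}=-u^{2}\left(\frac{1}{u-z}-\frac{1}{2u}\right)
\end{equation*}
then reverses the sign of the exponent. The jump condition is the Sokhotski--Plemelj formula combined with $-2\pi v(s)=\log(1-|r(s)|^{2})$, and the expansion at $z=\infty$ is obtained from the geometric series $1/(s-z)=-z^{-1}(1+s/z+\cdots)$ together with the definition of $T(\infty)$.

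For the boundedness of $s_{11}/T$, I would substitute the trace formula for $s_{11}$ and cancel the common Cauchy integral over $I(\xi)$ to arrive at
\begin{equation*}
\frac{s_{11}(z)}{T(z)}=\prod_{j=1}^{N}\frac{z-z_j}{z-\bar z_j}\,\exp\!\left(-i\int_{\mathbb{R}\setminus I(\xi)}\frac{v(s)}{s-z}\,ds-\frac{i}{2}\int_{I(\xi)}\frac{v(s)}{s}\,ds\right).
\end{equation*}
The Blaschke product is bounded by $1$ on $\overline{\mathbb{C}^{+}}$, the integral $\int_{I(\xi)}v(s)/(2s)\,ds$ is a fixed constant, and the Cauchy integral over $\mathbb{R}\setminus I(\xi)$ has real part equal to the Poisson extension of $v\,\chi_{\mathbb{R}\setminus I(\xi)}$ at $z\in\mathbb{C}^{+}$, uniformly bounded because $v\in L^{\infty}$. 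Holomorphicity on $\mathbb{C}^{+}$ is automatic since the zeros $z_j$ of $s_{11}$ and the integration set $I(\xi)\subset\mathbb{R}$ lie off the open upper half plane.

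The genuine content of the proposition is the local estimate \eqref{Test}. I would split
\begin{equation*}
\int_{I(\xi)}\frac{v(s)}{s-z}\,ds=v(\xi_k)\int_{I(\xi)}\frac{ds}{s-z}+\int_{I(\xi)}\frac{v(s)-v(\xi_k)}{s-z}\,ds.
\end{equation*}
The first integral evaluates to a sum of explicit logarithms across the two components of $I(\xi)$; isolating the only divergent piece as $z\to\xi_k$ produces $-\epsilon_{k}v(\xi_k)\log(z-\xi_k)$, with $\epsilon_{k}=(-1)^{k+1}$ read off from whether $\xi_k$ is a right or a left endpoint of its component of $I(\xi)$. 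The remaining boundary-log terms, together with $T(\infty)$, bundle into the constant $T_{k}(\xi_k)$ defined by \eqref{betai}. For the second integral, the Sobolev embedding $H^{1}\hookrightarrow C^{1/2}$ applied to $v$ (using $r\in H^{1}$) gives $|v(s)-v(\xi_k)|\le C\,\|r\|_{H^{1}}\,|s-\xi_k|^{1/2}$; a near/far decomposition about $\xi_k$, combined with a first-order Taylor expansion of $1/(s-z)$ in $z$ about $\xi_k$ on the far piece, delivers
\begin{equation*}
\left|\int_{I(\xi)}\frac{v(s)-v(\xi_k)}{s-z}\,ds-\int_{I(\xi)}\frac{v(s)-v(\xi_k)}{s-\xi_k}\,ds\right|\le C\,\|r\|_{H^{1}}\,|z-\xi_k|^{1/2}.
\end{equation*}
Exponentiating and applying $|e^{w}-1|\le|w|e^{|w|}$ then yields \eqref{Test}. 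The main obstacle is the consistent tracking of the branch of $\log(z-\xi_k)$ across the full sector $|\varphi_k|<\pi$ and the correct identification of the sign $\epsilon_{k}$ from the orientation of $I(\xi)$ at $\xi_k$; once these branch and orientation choices are fixed coherently, the Hölder estimate on the regular remainder is routine.
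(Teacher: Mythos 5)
Your proposal is correct and, for the only part the paper actually proves (the local estimate \eqref{Test}; the authors dismiss the first five properties as obvious), it follows essentially the same route: isolate the factor $(z-\xi_k)^{\epsilon_k v(\xi_k) i}$, show the remaining exponent $\beta_k(z;\xi_k)-\beta_k(\xi_k;\xi_k)$ is $\mathcal{O}(\|r\|_{H^1}|z-\xi_k|^{1/2})$ via Cauchy--Schwarz/H\"older continuity of $v$ coming from $r\in H^1$, and exponentiate. Your extra details on the reciprocal symmetry (using $\xi_1\xi_2=1$ and $v(1/s)=v(s)$) and on the boundedness of $s_{11}/T$ via the trace formula are sound additions that the paper omits.
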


        \begin{proof}
        	The  first five properties of $T(z)$ are obvious, so we only give the proof for the local properties.  We first prove the local property at the neighborhood of $\xi_2$ for $\xi>1$, and other cases can be proved in similar way.
        	 For  $z=\xi_2 + \mathbb{R}^+ e^{i\varphi_2} $ ($|\varphi_2|<\pi$),  $T(z)$ can be  rewritten as
        	 \begin{align}
        	 	T(z)&=T(\infty) \exp \left(-i\int_{\xi_2}^{+\infty} \frac{v(s)}{s-z}\, \mathrm{d}s\right) \exp \left(-i\int_{0}^{\xi_1} \frac{v(s)}{s-z}\, \mathrm{d}s\right)\nonumber \\
        	 	&= T(\infty) \left( z-\xi_2\right)^{iv_2} \exp \left(i \beta_2(z;\xi_2)\right),
        	 \end{align}
        	where $\beta_2(z;\xi_2)$ is defined by (\ref{betai}).
        	Then we estimate the error between $\beta_2(\xi_2;\xi_2)$ and $\beta_2(z;\xi_2)$.
        	\begin{equation}\label{betazxi2}
        		\begin{split}
       	& \left|\beta_2(z;\xi_2)- \beta_2(\xi_2;\xi_2) \right| \le \left|v(\xi_2) \ln (\xi_2+1-z) \right| + \left|\int_{\xi_2 +1}^{+\infty}\frac{v(s)}{s-z}-  \frac{v(s)}{s-\xi_2}\,\mathrm{d}s    \right|	 \\
       	& +\left| \int_{\xi_2}^{\xi_2+1} \frac{v(s)-v(\xi_2)}{s-z} -  \frac{v(s)-v(\xi_2)}{s-\xi_2}        \,\mathrm{d}s         \right| +\left|\int_{0}^{\xi_1}\frac{v(s)}{s-z}-  \frac{v(s)}{s-\xi_2}\,\mathrm{d}s    \right|.	
        		\end{split}
        	\end{equation}
        Through simple calculations, we know
        \begin{equation}\label{est1}
        v(\xi_2) \ln (\xi_2+1-z) \sim -v(\xi_2)(z-\xi_2) + \mathcal{O}\left( (z-\xi_2)^2 \right),
        \end{equation}
         and
         \begin{equation}\label{est234}
         \left|\int_{\xi_2 +1}^{+\infty}\frac{v(s)}{s-z}-  \frac{v(s)}{s-\xi_2}\,\mathrm{d}s    \right| \lesssim ||r||_{H^1} |z-\xi_2|^{-1/2}.
         \end{equation}
    Then, we have
      $$\left| \int_{\xi_2}^{\xi_2+1} \frac{v(s)-v(\xi_2)}{s-z} -  \frac{v(s)-v(\xi_2)}{s-\xi_2}        \,\mathrm{d}s         \right|, \ \left|\int_{0}^{\xi_1}\frac{v(s)}{s-z}-  \frac{v(s)}{s-\xi_2}\,\mathrm{d}s    \right| \lesssim ||r||_{H^1} |z-\xi_2|^{-1/2}.$$
      By combining (\ref{est1}) and (\ref{est234}), the inequality (\ref{Test})  can be obtained.	
        \end{proof}

	   \subsection{Constructing interpolation functions}

 For all  poles $z_j\in \mathcal{Z}^+$ on the unit circle $|z|=1$,   we define
     $$\rho=\frac{1}{2}   \min \left(
        \operatorname*{min}\limits_{z_j\in \mathcal{Z}^+ }|{\rm Im} z_j|, \  \operatorname*{min}\limits_{z_k, z_j\in \mathcal{Z}^+ }|  z_j-z_k|,  \operatorname*{min}\limits_{n=0, \pm 1 } \{ |\xi_1-n|, |\xi_2-n|   \}  \right).$$
If we  make a small circle  with each   $z_k$ as its center point and  $\rho$ as a radius respectively, then
 they are  disjoint each other and real axis.  See Figure \ref{fig5}.
 For convenience, we define a  directed path
       \begin{equation}
              \Sigma^{pole  }= \cup_{j=1}^N  \left\{ z\in \mathbb{C}:|z-z_j|=\rho \;or\; |z-\bar{z}_j|=\rho    \right\}.
       \end{equation}

To  convert  the  residues at   $z_k$ into the corresponding  jumps  on   circles
 such that they  further decay on  the new  jumps,   we introduce an interpolation function
  as follows.\\
   For $\xi>1$,
      \begin{equation}
           G(z) = \begin{cases}
            \left(\begin{array}{cc} 1&0 \\ -\displaystyle {\frac{c_j e^{2it\theta(z_j)}}{z-z_j}}&1\end{array}  \right),\; |z-z_j|<\rho, \\[4pt]
            \left(\begin{array}{cc} 1& - \displaystyle {\frac{\bar{c}_j e^{-2it\theta(\bar{z}_j) }}{z-\bar{z}_j} }\\ 0&1\end{array}  \right),\; |z-\bar{z}_j|<\rho,\\
            \left(\begin{array}{cc} 1&0 \\ 0&1\end{array}  \right),\; \text{as}\; z\; \text{in elsewhere};
           \end{cases}
       \end{equation}
       for $\xi<-1$,
       \begin{equation}
        G(z) = \begin{cases}
         \left(\begin{array}{cc} 1& -\displaystyle {\frac{z-z_j}{c_j e^{2it\theta(z_j)}}} \\ 0&1\end{array}  \right),\; |z-z_j|<\rho, \\[4pt]
         \left(\begin{array}{cc} 1&0\\ -\displaystyle {\frac{z-\bar{z}_j}{\bar{c}_j e^{-2it\theta(\bar{z}_j) }}} &1\end{array}  \right),\; |z-\bar{z}_j|<\rho,\\
         \left(\begin{array}{cc} 1&0 \\ 0&1\end{array}  \right),\; \text{as}\; z\; \text{in elsewhere},
        \end{cases}
    \end{equation}
        where $z_j \in \mathcal{Z}^+$ and the corresponding $\bar{z}_j \in \mathcal{Z}^-$.
       Then we make the following transformation which can renormalize the RH problem $M(z)$
       \begin{equation}\label{mtom1}
        M^{(1)}(z)=T(\infty)^{-\sigma_3} M(z) G(z)T(z)^{\sigma_3},
       \end{equation}
       which satisfies the following RH problem.

       \noindent\textbf{RHP2}.  Find a matrix-valued function $M^{(1)}(z)=M^{(1)}(z;x,t)$  satisfying
       \begin{itemize}
        \item Analyticity: $M^{(1)}(z)$ is analytic  in $\mathbb{C} \backslash \Sigma^{(1)}$  where $\Sigma^{(1)} = \mathbb{R} \cup \Sigma^{pole}$.
        \item Symmetry: $M^{(1)}(z)=\sigma_1 \overline{M^{(1)}(\bar{z})}\sigma_1 =z^{-1}M^{(1)}(z^{-1})\sigma_1$.
        \item Jump condition:  $  M^{(1)}_+(z)=M^{(1)}_-(z)V^{(1)}(z),$
        where for $z \in \mathbb{R}$
        \begin{equation}
            V^{(1)}(z)=        \begin{cases}
                \left(\begin{array}{cc} 1&-\bar{r} T^{-2}e^{-2it\theta }\\ 0&1\end{array}  \right)
                \left(\begin{array}{cc} 1&0\\   r T^2 e^{2it\theta }&1\end{array}  \right), \quad z\in \mathbb{R} \backslash I(\xi),\\
                \left(\begin{array}{cc} 1 & 0 \\ \frac{r}{1-|r|^2} T_-^{2} e^{2it\theta } & 1\end{array}  \right)
                \left(\begin{array}{cc} 1 & \frac{-\bar{r}}{1-|r|^2}T_+^{-2}e^{-2it\theta } \\ 0 & 1\end{array}  \right), \quad  z\in I(\xi),
            \end{cases}
        \end{equation}
       while jump matrices  on  $z\in \Sigma^{pole}$ are  given as follows.\\
       For $   \xi >1,$
        \begin{align}
           & V^{(1)}(z) = \begin{cases}
                \left(\begin{array}{cc} 1&0 \\ -\displaystyle {\frac{c_j T^{2}(z)e^{2it\theta(z_j)}}{z-z_j}}&1\end{array}  \right),\; |z-z_j|<\rho, \\[4pt]
                \left(\begin{array}{cc} 1& \displaystyle {\frac{\bar{c}_j T^{-2}(z)e^{-2it\theta(\bar{z}_j) }}{z-\bar{z}_j}} \\ 0&1\end{array}  \right),\; |z-\bar{z}_j|<\rho.
               \end{cases}
               \end{align}
        For $  \xi <-1,$
           \begin{align}
           & V^{(1)}(z) = \begin{cases}
                \left(\begin{array}{cc} 1& -\displaystyle {\frac{z-z_j}{c_j T^{2}(z)e^{2it\theta(z_j)}}}\\ 0&1\end{array}  \right),\; |z-z_j|<\rho, \\[4pt]
                \left(\begin{array}{cc} 1&0\\ \displaystyle {\frac{z-\bar{z}_j}{\bar{c}_j T^{-2}(z) e^{-2it\theta(\bar{z}_j) }}} &1\end{array}  \right),\; |z-\bar{z}_j|<\rho.
               \end{cases}
        \end{align}
        \item Asymptotic behaviors:
        \begin{align*}
                &M^{(1)}(z)=I+\mathcal{O}(z^{-1}),	\quad  z \to  \infty,\\
                &zM^{(1)}(z)=\sigma_1+\mathcal{O}(z), \quad z \to 0.
        \end{align*}

    \end{itemize}

    \section{Transition to  a hybrid $\bar{\partial}$-RH problem}

	   \subsection{Opening the $\bar{\partial}$-lenses}

       Fix a sufficiently small angle $0<\phi(\xi) <\arctan \left( \frac{|\xi_2(\xi)|}{\min | \im z_j|} \right), j =1,\cdots,N $,   such that all regions which are touched by opening the jump contour $\mathbb{R}$   do not intersect  any  pole point $z_j$.
   See Figure \ref{fig5}.

   \begin{figure}
        \begin{center}
         \subfigure[$ \xi>1$]{
            \begin{tikzpicture}[scale=0.9]
\draw [dashed  ] (3.3,2.6) to [out=-75,in=90] (3.75,0);
\draw [dashed  ] (3.3,-2.6) to [out=75,in=-95] (3.75,0);
                \draw[dashed,->](-4.2,0)--(6,0)node[black,right]{Re$z$};
                \foreach \x [count=\p] in {1,...,5} {
            \node[shape=circle,fill=red, scale=0.15] (\p) at (\x*30:2) {};
            \draw[red,line width=0.5] (\x*30:2) circle (0.2);
            \draw[red, ->] (\x*30:2)+(0.2,0) arc (0:90:0.2);
            \node[shape=circle,fill=blue, scale=0.15] (\p) at (180+\x*30:2) {};
            \draw[blue,line width=0.5] (180+\x*30:2) circle (0.2);
            \draw[blue, ->] (180+\x*30:2)+(-0.2,0) arc (180:90:0.2);
            };


                \node[below] at (-0.1,0) {\footnotesize $0$};
                \node[below] at (-2.3,0) {\footnotesize $-1$};
                \node[right] at (1.2,1.8) {$z_j$};
                \node[right] at (1.2,-1.8) {$\bar{z}_j$};
              \draw [dashed, gray] (0,0) circle (2);
               \draw [dashed] (0.8,0) circle [radius=0.8];
                                 \draw [ blue] (-4, -1.5)--(0.8,0.35);
                                 \draw [ blue] (0.8,0.35 )--(2.7,-0.5);
                                 \draw [-latex,blue] (0.8,0.35 )--(2.2,-0.265);
                           \draw [ blue] (2.7,-0.5)--(5.5, 0.75);
                           \draw [-latex, blue] (2.7,-0.5)--(4.6, 0.35);
   \draw [-latex,blue] (-4, -1.5)--(-1.25,-0.44 );
   \draw[](0.8,-0.35)--(0.8,0.35);
   \draw[-latex](0.8,0)--(0.8,0.16);
   \draw[-latex](0.8,-0.35)--(0.8,-0.1);
   \draw[](2.7,-0.5)--(2.7,0.5);
   \draw[-latex](2.7,0.5)--(2.7,0.2);
   \draw[-latex](2.7,0)--(2.7,-0.2);
                             \draw [ red] (0.8,-0.35)--(2.7, 0.5);
                                \draw [-latex,red] (0.8,-0.35 )--(2.2,0.265);
                                \draw [ red] (2.7,0.5)--(5.5, -0.75);
                                \draw [ red] (-4, 1.5)--(0.8,-0.35 );
                                 \draw [-latex,red] (2.7, 0.5)--(4.6, -0.35);
                                       \draw [-latex,red] (-4,  1.5)--(-1.25, 0.44 );
                                  \node[blue,thick]  at (4.5,1.2) {\bf  $+$};
                                     \node[red,thick]  at (4.5,-1.2) {\bf  $-$};
                                       \node[red,thick]  at (2.6,1.2) {\bf  $-$};
                                     \node[blue,thick]  at (2.6,-1.2) {\bf  $+$};
                                     \node[red,thick]  at (0,1.2) {\small\bf $-$};
                                     \node[blue,thick]  at (0,-1.2) {\small\bf $+$};
                                      \node[blue,thick]  at (0.8,0.5) {\tiny\bf $+$};
                                         \node[red,thick]  at (0.8,-0.5) {\tiny\bf $-$};
                                         \node[blue,thick]  at (-3,-1.3) {\bf  $+$};
                                         \node[red,thick]  at (-3,1.3) {\bf  $-$};

                                                 \node[below] at (3.9,0) {\footnotesize $\xi_2$};
                                         \node[below] at (1.73,0) {\footnotesize $\xi_1$};
                                         \node  at (5.6,1) {\footnotesize $\Sigma_{21}$};
                                         \node  at (5.6,-1) {\footnotesize $\Sigma_{24}$};
                                          \node  at (3.3,0.6) {\footnotesize $\Sigma_{22}$};
                                         \node  at (3.3,-0.6) {\footnotesize $\Sigma_{23}$};
                                          \node  at (2.2,0.6) {\footnotesize $\Sigma_{12}$};
                                         \node  at (2.3,-0.6) {\footnotesize $\Sigma_{13}$};
                                            \node  at (-3.6,1) {\footnotesize $\Sigma_{02}$};
                                         \node  at (-3.6,-1) {\footnotesize $\Sigma_{03}$};
                                           \node  at (1.3,0.3) {\tiny $\Sigma_{11}$};
                                             \node  at (0.35,0.3) {\tiny $\Sigma_{01}$};
                                             \node  at (1.23,-0.4) {\tiny $\Sigma_{14}$};
                                             \node  at (0.35,-0.4) {\tiny $\Sigma_{04}$};
            \end{tikzpicture}
         }%

         \subfigure[$ \xi<-1$]{
            \begin{tikzpicture}[scale=0.9]
\draw [dashed  ] (-3.3,2.6) to [out=75,in=90] (-3.8,0);
\draw [dashed  ] (-3.3,-2.6) to [out=-75,in=-95] (-3.8,0);
                \draw[dashed,->](-6,0)--(4.5,0)node[black,right]{Re$z$};
                \foreach \x [count=\p] in {1,...,5} {
                    \node[shape=circle,fill=blue, scale=0.15] (\p) at (\x*30:2) {};
                    \draw[blue,line width=0.5] (\x*30:2) circle (0.2);
                    \draw[blue, ->] (\x*30:2)+(-0.2,0) arc (180:90:0.2);
                    \node[shape=circle,fill=red, scale=0.15] (\p) at (180+\x*30:2) {};
                    \draw[red,line width=0.5] (180+\x*30:2) circle (0.2);
                    \draw[red, ->] (180+\x*30:2)+(0.2,0) arc (0:90:0.2);
                    };

                \node[below] at (2.1,0.05) {\footnotesize $1$};
                \node[below] at (0.1,0) {\footnotesize $0$};
                \node[right] at (1.2,1.8) {$z_j$};
                \node[right] at (1.2,-1.8) {$\bar{z}_j$};
               \draw [dashed, gray] (0,0) circle (2);
               \draw [dashed] (-0.8,0) circle [radius=0.8];

                                 \draw [ red] (4, -1.5)--(-0.8,0.35);
                                 \draw [ red] (-0.8,0.35 )--(-2.7,-0.5);
                                 \draw [-latex,red] (-2.7,-0.5)--(-2,-0.175);
                           \draw [ red] (-2.7,-0.5)--(-5.5, 0.75);
                           \draw [-latex, red] (-5.5, 0.75)--(-4.6, 0.35);
   \draw [-latex,red] (-0.8,0.35 )--(1.25,-0.44 );
   \draw[](-0.8,-0.35)--(-0.8,0.35);
   \draw[-latex](-0.8,0)--(-0.8,0.16);
   \draw[-latex](-0.8,-0.35)--(0-.8,-0.1);
   \draw[](-2.7,-0.5)--(-2.7,0.5);
   \draw[-latex](-2.7,0.5)--(-2.7,0.2);
   \draw[-latex](-2.7,0)--(-2.7,-0.2);
                             \draw [ blue] (-0.8,-0.35)--(-2.7, 0.5);
                                \draw [-latex,blue] (-2.7, 0.5)--(-2,0.175);
                                \draw [blue] (-2.7,0.5)--(-5.5, -0.75);
                                \draw [blue] (4, 1.5)--(-0.8,-0.35 );
                                 \draw [-latex,blue] (-5.5, -0.75)--(-4.6, -0.35);
                                   \draw [-latex,blue] (-0.8,-0.35 )--(1.25, 0.44 );

                                  \node[red,thick]  at (-4.5,1.2) {\bf  $-$};
                                     \node[blue,thick]  at (-4.5,-1.2) {\bf  $+$};
                                       \node[blue,thick]  at (2.6,1.2) {\bf  $+$};
                                     \node[red,thick]  at (2.6,-1.2) {\bf  $-$};
                                     \node[blue,thick]  at (0,1.2) {\small\bf $+$};
                                     \node[red,thick]  at (0,-1.2) {\small\bf $-$};
                                      \node[red,thick]  at (-0.8,0.5) {\tiny\bf $-$};
                                         \node[blue,thick]  at (-0.8,-0.5) {\tiny\bf $+$};
                                         \node[red,thick]  at (-3,-1.2) {\bf  $-$};
                                         \node[blue,thick]  at (-3,1.2) {\bf  $+$};

                                                 \node[below] at (-3.9,0) {\footnotesize $\xi_2$};
                                         \node[below] at (-1.73,0) {\footnotesize $\xi_1$};
                                         \node  at (-5.6,1) {\footnotesize $\Sigma_{22}$};
                                         \node  at (-5.6,-1) {\footnotesize $\Sigma_{23}$};
                                          \node  at (-3.2,0.6) {\footnotesize $\Sigma_{21}$};
                                         \node  at (-3.2,-0.6) {\footnotesize $\Sigma_{24}$};
                                          \node  at (-2.3,0.6) {\footnotesize $\Sigma_{11}$};
                                         \node  at (-2.3,-0.65) {\footnotesize $\Sigma_{14}$};
                                            \node  at (3.6,1) {\footnotesize $\Sigma_{01}$};
                                         \node  at (3.6,-1) {\footnotesize $\Sigma_{04}$};
                                           \node  at (-1.3,0.3) {\tiny $\Sigma_{12}$};
                                             \node  at (-0.35,0.3) {\tiny $\Sigma_{02}$};
                                             \node  at (-1.23,-0.4) {\tiny $\Sigma_{13}$};
                                             \node  at (-0.35,-0.4) {\tiny $\Sigma_{03}$};
            \end{tikzpicture}
         }%
        \end{center}
        \caption{ \small{ The jump contours and the signal of  $\re \left(2i\theta \right)$ in different regions for the two  cases $ \xi>1$ and $ \xi<-1$:
        Open  the jump contour $\mathbb{R}$ in a sufficient small fixed angle, such that  the regions enclosed between the red and blue straight lines which contains the real lines does not touch any part of the circles of the poles. The signal $+$ means the real part $\re \left(2i\theta\right) >0$ in these regions, and  $-$ stands for $\re \left(2i\theta \right)<0$   in the  corresponding regions. }}
        \label{fig5}
    \end{figure}
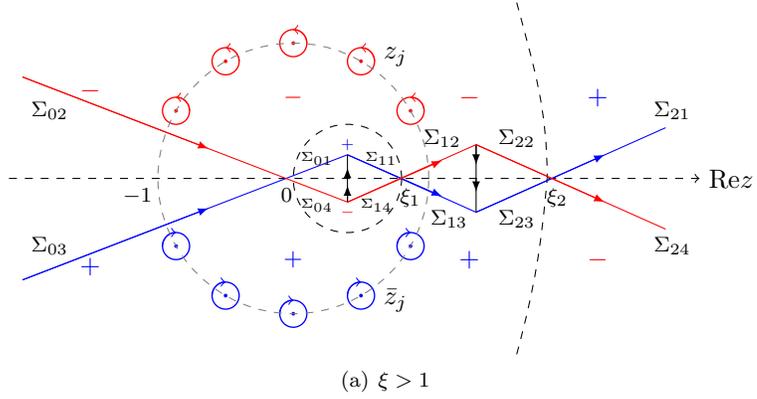
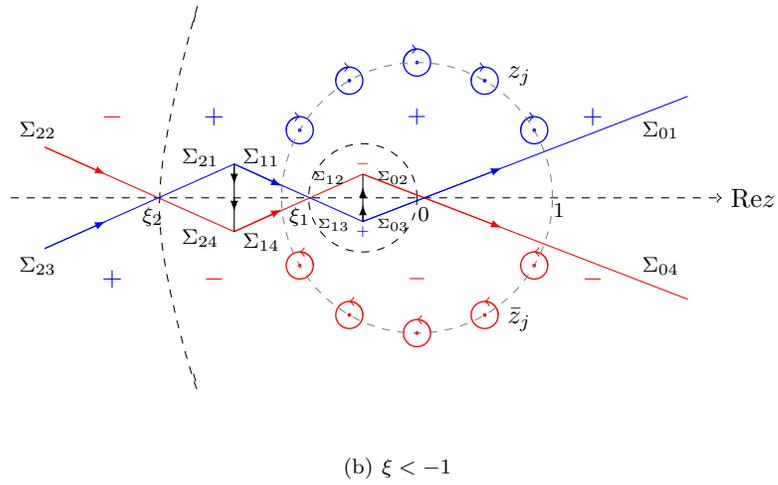

      Denote $\xi_{0}=0$, $\xi_{0,1}=(\xi_{0}+\xi_{1})/2$, $\xi_{1,2}=(\xi_{1}+\xi_{2})/2$, and real intervals
    \begin{align*}
   & l_1\in \left(0,  |\xi_{0,1}|\sec \phi(\xi)\right), \ \  l_2\in \left(0,  |\xi_{1,2}-\xi_1|\sec \phi(\xi)\right), \\
    & \tilde{l}_1\in \left(0,  |\xi_{0,1}|\tan \phi(\xi)\right), \ \  \tilde{l}_2\in \left(0,  |\xi_{1,2}-\xi_1|\tan \phi(\xi)\right).
      \end{align*}
   Then we define the  boundaries $\Sigma_{kj}, \ k=0,1,2, \ j=1,2,3,4$,  produced   after opening the real axis  $\mathbb{R}$ at  $0, \xi_1, \xi_2$:
 \begin{align*}
     & \Sigma_{k1}=\xi_k+e^{i\phi(\xi)} \mathbb{R}^+,\quad \Sigma_{k2}=\xi_k+e^{i\left(\pi - \phi(\xi)\right)} l_2,\quad \Sigma_{k3}=\overline{\Sigma}_{k2},\quad \Sigma_{k4}=\overline{\Sigma}_{k1}, \ k=0, 2,\\
      &\Sigma_{11}=\xi_1+e^{i\left(\pi - \phi(\xi)\right)} l_1,\quad
     \Sigma_{12}=\xi_1+e^{i\phi(\xi)} l_2,\;  \quad
     \Sigma_{13}=\overline{\Sigma}_{12},\quad \, \Sigma_{14}=\overline{\Sigma}_{11},\\
     &\Sigma'=\cup_{j=1}^4\Sigma'_j, \ \  \Sigma'_1=\xi_{0,1}+ e^{i\pi/2} \tilde{l}_1, \quad \Sigma'_2=\overline{\Sigma}'_1,\quad
   	\Sigma'_3=\xi_{1,2}+ e^{i\pi/2} \tilde{l}_2, \quad \Sigma'_4=\overline{\Sigma}'_3.
     \end{align*}
     Denote by $\Omega_{kj}, k=0,1,2, j=1,2,3,4 $, the twelve  sectors enclosed between the boundaries $\Sigma_{kj}$ and the real line as shown in Figure \ref{rj1}. Denote the  cone  $\Omega=\cup_{k=0}^{2}\cup_{j=1}^{4}\Omega_{kj}$, and the intervals
   \begin{align*}	
   	  	& I_1=\begin{cases}
        (-\infty,0), \text{for}\; \xi>1,\\
        (-\infty,\xi_2), \text{for}\; \xi<-1,
     \end{cases}\;    I_2=\begin{cases}
        (0,\xi_1), \text{for}\; \xi>1,\\
        (\xi_2,\xi_1), \text{for}\; \xi<-1,
     \end{cases}\\
    & I_3=\begin{cases}
        (\xi_1,\xi_2), \text{for}\; \xi>1,\\
        (\xi_1,0), \text{for} \;\xi<-1,
     \end{cases}\quad \;
     I_4=\begin{cases}
        (\xi_2, +\infty), \text{for}\; \xi>1,\\
        (0,+\infty), \text{for}\; \xi<-1.
     \end{cases}
    \end{align*}

\begin{figure}
    \begin{center}
     \subfigure[$ \xi>1$]{
         \begin{tikzpicture}

            \draw[Blue!10,fill=LightSteelBlue!] (2,0)--(3,1)--(3,0);
\draw[Blue!10,fill=LightSteelBlue!] (2,0)--(1,-1)--(1,0);
\draw[CadetBlue!20,fill=LightSteelBlue!20] (0,0)--(1,1)--(1,0);	
\draw[CadetBlue!20,fill=LightSteelBlue!20] (0,0)--(-1,-1)--(-1,0);
\draw[Blue!10,fill=LightSteelBlue!] (-2,0)--(-1,1)--(-1,0);	
\draw[Blue!10,fill=LightSteelBlue!] (-2,0)--(-3,-1)--(-3,0);

\draw[CadetBlue!20,fill=LightSteelBlue!20] (2,0)--(1,1)--(1,0);	\draw[CadetBlue!20,fill=LightSteelBlue!20] (2,0)--(3,-1)--(3,0);
\draw[Blue!10,fill=LightSteelBlue!] (0,0)--(1,-1)--(1,0);	\draw[Blue!10,fill=LightSteelBlue!] (0,0)--(-1,1)--(-1,0);
\draw[CadetBlue!20,fill=LightSteelBlue!20] (-2,0)--(-1,-1)--(-1,0);	\draw[CadetBlue!20,fill=LightSteelBlue!20] (-2,0)--(-3,1)--(-3,0);
             \draw[](-3.4,0)--(3.4,0);
             \draw[](0,0)--(0.5,0.5);
             \draw[](0.5,0.5)--(1,1);
             \node[below, scale=0.6] at (0,0) {$\xi_1$};
             \node[below, scale=0.6]   at (-2,0) {$0$};
             \node[below, scale=0.6]  at (2,0) {$\xi_2$};
             \node[below, scale=0.6]  at (-1,0) {$\xi_{0,1}$};
             \node[below, scale=0.6]  at (1,0) {$\xi_{1,2}$};
             \draw[](-2,0)--(-1.5,0.5);
             \draw[](-1.5,0.5)--(-1,1);
             \draw[](-3,1)--(-2.5,0.5);
             \draw[](-2.5,0.5)--(-2,0);
             \draw[](-3,-1)--(-2.5,-0.5);
             \draw[](-2.5,-0.5)--(-2,0);
             \draw[](-2,0)--(-1.5,-0.5);
             \draw[](-1.5,-0.5)--(-1,-1);	
             \draw[](-1,-1)--(-0.5,-0.5);
             \draw[](-0.5,-0.5)--(0,0);
             \draw[](0,0)--(-0.5,0.5);
             \draw[](-1,1)--(-0.5,0.5);
             \draw[](0,0)--(0.5,-0.5);
             \draw[]	(0.5,-0.5)--(1,-1);
             \draw[](1.5,0.5)--(2,0);
             \draw[](1,1)--(1.5,0.5);
             \draw[](2,0)--(1.5,-0.5);
             \draw[](1,-1)--(1.5,-0.5);
             \draw[](2,0)--(2.5,0.5);
             \draw[](3,1)--(2.5,0.5);
             \draw[](2.5,-0.5)--(3,-1);
             \draw[](2,0)--(2.5,-0.5);
             \draw[](1,0)--(1,0.5);
             \draw[](1,1)--(1,0.4);
             \draw[](1,0)--(1,-0.5);
             \draw[](1,0.5)--(1,-1);

             \node[scale=0.6] at (2.8,0.38) {$\Omega_{21}$};
             \node[scale=0.6] at (2.8,-0.38) {$\Omega_{24}$};
             \node[scale=0.6] at (1.3,0.3) {$\Omega_{22}$};
             \node[scale=0.6] at (1.3,-0.32) {$\Omega_{23}$};
             \node[scale=0.6] at (0.72,0.3) {$\Omega_{12}$};
             \node[scale=0.6] at (0.72,-0.32) {$\Omega_{13}$};
             \node[scale=0.6] at (-0.7,0.3) {$\Omega_{11}$};
             \node[scale=0.6] at (-0.68,-0.32) {$\Omega_{14}$};
             \node[scale=0.6] at (-1.3,0.3) {$\Omega_{01}$};
             \node[scale=0.6] at (-1.4,-0.3) {$\Omega_{04}$};
             \node[scale=0.6] at (-2.8,0.3) {$\Omega_{02}$};
             \node[scale=0.6] at (-2.8,-0.25) {$\Omega_{03}$};
             \draw[](-1,0)--(-1,0.5);
             \draw[](-1,0.5)--(-1,1);
             \draw[](-1,-1)--(-1,-0.4);
             \draw[](-1,0)--(-1,-0.5);
              \draw[ ->](-4,0)--(4,0)node[black,right]{Re$z$};
         \end{tikzpicture}
     }%

     \subfigure[$ \xi<-1$]{
         \begin{tikzpicture}
            \draw[Blue!10,fill=LightSteelBlue!] (2,0)--(3,1)--(3,0);
\draw[Blue!10,fill=LightSteelBlue!] (2,0)--(1,-1)--(1,0);
\draw[CadetBlue!20,fill=LightSteelBlue!20] (0,0)--(1,1)--(1,0);	
\draw[CadetBlue!20,fill=LightSteelBlue!20] (0,0)--(-1,-1)--(-1,0);
\draw[Blue!10,fill=LightSteelBlue!] (-2,0)--(-1,1)--(-1,0);	
\draw[Blue!10,fill=LightSteelBlue!] (-2,0)--(-3,-1)--(-3,0);

\draw[CadetBlue!20,fill=LightSteelBlue!20] (2,0)--(1,1)--(1,0);	\draw[CadetBlue!20,fill=LightSteelBlue!20] (2,0)--(3,-1)--(3,0);
\draw[Blue!10,fill=LightSteelBlue!] (0,0)--(1,-1)--(1,0);	\draw[Blue!10,fill=LightSteelBlue!] (0,0)--(-1,1)--(-1,0);
\draw[CadetBlue!20,fill=LightSteelBlue!20] (-2,0)--(-1,-1)--(-1,0);	\draw[CadetBlue!20,fill=LightSteelBlue!20] (-2,0)--(-3,1)--(-3,0);	\draw[CadetBlue!20,fill=LightSteelBlue!20] (-2,0)--(-3,1)--(-3,0);
         \draw[](-3.4,0)--(3.4,0);
         \draw[](0,0)--(0.5,0.5);
         \draw[](0.5,0.5)--(1,1);
         \node[below, scale=0.6] at (0,0) {$\xi_1$};
         \node[below, scale=0.6]   at (-2,0) {$\xi_2$};
         \node[below, scale=0.6]  at (2,0) {$0$};
         \node[below, scale=0.6]  at (-1,0) {$\xi_{0,1}$};
         \node[below, scale=0.6]  at (1,0) {$\xi_{1,2}$};
         \draw[](-2,0)--(-1.5,0.5);
         \draw[](-1.5,0.5)--(-1,1);
         \draw[](-3,1)--(-2.5,0.5);
         \draw[](-2.5,0.5)--(-2,0);
         \draw[](-3,-1)--(-2.5,-0.5);
         \draw[](-2.5,-0.5)--(-2,0);
         \draw[](-2,0)--(-1.5,-0.5);
         \draw[](-1.5,-0.5)--(-1,-1);	
         \draw[](-1,-1)--(-0.5,-0.5);
         \draw[](-0.5,-0.5)--(0,0);
         \draw[](0,0)--(-0.5,0.5);
         \draw[](-1,1)--(-0.5,0.5);
         \draw[](0,0)--(0.5,-0.5);
         \draw[]	(0.5,-0.5)--(1,-1);
         \draw[](1.5,0.5)--(2,0);
         \draw[](1,1)--(1.5,0.5);
         \draw[](2,0)--(1.5,-0.5);
         \draw[](1,-1)--(1.5,-0.5);
         \draw[](2,0)--(2.5,0.5);
         \draw[](3,1)--(2.5,0.5);
         \draw[](2.5,-0.5)--(3,-1);
         \draw[](2,0)--(2.5,-0.5);
         \draw[](1,0)--(1,0.5);
         \draw[](1,1)--(1,0.4);
         \draw[](1,0)--(1,-0.5);
         \draw[](1,0.5)--(1,-1);

         \node[scale=0.6] at (2.8,0.38) {$\Omega_{01}$};
         \node[scale=0.6] at (2.8,-0.38) {$\Omega_{04}$};
         \node[scale=0.6] at (1.3,0.3) {$\Omega_{02}$};
         \node[scale=0.6] at (1.3,-0.32) {$\Omega_{03}$};
         \node[scale=0.6] at (0.72,0.3) {$\Omega_{12}$};
         \node[scale=0.6] at (0.72,-0.32) {$\Omega_{13}$};
         \node[scale=0.6] at (-0.7,0.3) {$\Omega_{11}$};
         \node[scale=0.6] at (-0.68,-0.32) {$\Omega_{14}$};
         \node[scale=0.6] at (-1.3,0.3) {$\Omega_{21}$};
         \node[scale=0.6] at (-1.4,-0.3) {$\Omega_{24}$};
         \node[scale=0.6] at (-2.8,0.3) {$\Omega_{22}$};
         \node[scale=0.6] at (-2.8,-0.25) {$\Omega_{23}$};
         \draw[](-1,0)--(-1,0.5);
         \draw[](-1,0.5)--(-1,1);
         \draw[](-1,-1)--(-1,-0.4);
         \draw[](-1,0)--(-1,-0.5);
          \draw[](-1,0)--(-1,-0.5);
           \draw[ ->](-4,0)--(4,0)node[black,right]{Re$z$};
     \end{tikzpicture}
     }%
    \end{center}
    \caption{ \small The continuous extension regions.   ${\rm Re} (2i\theta)>0$   in the blue regions and ${\rm Re} (2i\theta)<0$  in the gray regions.}
    \label{rj1}
    \end{figure}
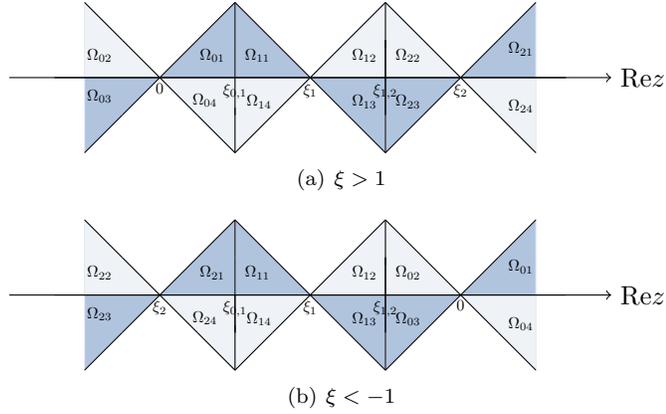

\begin{proposition} \label{prop2}  For $ |\xi|>1, \ \xi=\mathcal{O}(1)$, we have for $z = \xi_k + le^{i w}:=\xi_k + u +i v, k=0,1,2$,
    \begin{align}
        &\re(2i\theta(z)) \ge c(\xi,\xi_k) v >0,\quad z\in \Omega_{k1} \cup \Omega_{k3},\\
       &\re(2i\theta(z)) \le -c(\xi,\xi_k) v<0,\quad z\in \Omega_{k2} \cup \Omega_{k4},
    \end{align}
  where $c(\xi,\xi_k)$ is a constant.
   \end{proposition}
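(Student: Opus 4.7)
The plan is to reduce the claim to a sign and size analysis of the real amplitude obtained after extracting $v = \im z$ from $\re(2i\theta(z))$. A direct rewriting of the formula in the excerpt gives
\begin{equation}
\re(2i\theta(z)) = 2v\,G(z), \qquad G(z) := \re z\,(1+|z|^{-4}) - \xi\,(1+|z|^{-2}),\nonumber
\end{equation}
so everything comes down to controlling $G$ on each sector $\Omega_{kj}$.

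First I would verify that $G$ vanishes on $\mathbb{R}$ exactly at $\xi_0,\xi_1,\xi_2$: the identity $\xi_k(1+\xi_k^{-4})=\xi(1+\xi_k^{-2})$ follows directly from $\theta'(\xi_k)=0$ via \eqref{2theta}. Applying the Cauchy--Riemann equations to $\im\theta = -vG$ together with harmonicity of $\re\theta$ yields $\partial_{\re z}G|_{z=\xi_k}=-\theta''(\xi_k)$, while $\partial_{\im z}G|_{z=\xi_k}=0$ by parity. Hence for $z-\xi_k = u + iv = l e^{iw}$ small,
\begin{equation}
G(z) = -\theta''(\xi_k)\,u + O(l^2), \qquad \re(2i\theta(z)) = -2\theta''(\xi_k)\,uv + O(l^3).\nonumber
\end{equation}

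With this expansion and the known signs $\theta''(\xi_1)>0$, $\theta''(\xi_2)<0$, the sign check is a short case analysis: because the indexing of the sectors at $\xi_1$ is rotated by $\pi/2$ relative to $\xi_0$ and $\xi_2$, the combination $\sgn(\theta''(\xi_k))\,\sgn(uv)$ turns out to be constant on $\Omega_{k1}\cup\Omega_{k3}$ and on $\Omega_{k2}\cup\Omega_{k4}$, producing the asserted signs of $\re(2i\theta)$. For the quantitative bound I would use that inside the open cone $\Omega_{kj}$ the argument $w$ stays bounded away from $0$ and $\pi$ by the opening $\phi(\xi)$, so $|\cos w|\ge \cos\phi$ and $|u|\ge |v|\cot\phi$; feeding these into the leading term $-2\theta''(\xi_k)\,uv$ gives a local lower bound near $\xi_k$. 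To pass to the whole sector, I would note that $G$ has no other zeros inside the closure of $\Omega_{kj}$ (its only real zeros are $\xi_0,\xi_1,\xi_2$, and each sector's closure meets $\mathbb{R}$ in an interval containing exactly one $\xi_k$), so continuity together with compactness on $\{l\ge\delta\}\cap\Omega_{kj}$ yields the claimed constant $c(\xi,\xi_k)$ depending only on $\xi$, $\xi_k$, and $\phi(\xi)$.

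The main technical obstacle is the uniformity as $l\downarrow 0$: the leading Taylor term $-2\theta''(\xi_k)uv$ is quadratic in $l$ while the target bound is linear in $|v|$, so one must exploit the cone inequality $l\ge |v|/\sin\phi$ to absorb one factor of $l$ into the constant (equivalently, the raw estimate reads $|\re(2i\theta)|\gtrsim l\,|v|\gtrsim |v|^2/\sin\phi$, and the stated bound $\ge c(\xi,\xi_k)\,|v|$ is then recovered by using that the second factor of $l$ is bounded below on the truncated sector produced by the cutoffs $l_1,l_2,\tilde l_1,\tilde l_2$). A secondary point is that the same geometric inequalities must be checked to survive near the excised disc of radius $\rho$ around $z=0$, where the powers of $|z|^{-1}$ in $G$ are bounded by construction and hence contribute only to the constant $c(\xi,\xi_k)$.
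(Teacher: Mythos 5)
Your reduction $\re(2i\theta(z))=2\,\im z\cdot G(z)$ with $G(z)=\re z\,(1+|z|^{-4})-\xi(1+|z|^{-2})$ is correct (it in fact repairs the sign and the missing factor $2$ in the paper's displayed formula for $\re(2i\theta)$), and for $k=1,2$ your local expansion $\re(2i\theta)=-2\theta''(\xi_k)uv+\mathcal{O}(l^2|v|)$ is valid. But two of your steps fail. First, the case $k=0$: $G$ does \emph{not} vanish at $\xi_0=0$ --- it has a pole there, since $\theta$ itself is singular at $z=0$ --- so the identity $\xi_0(1+\xi_0^{-4})=\xi(1+\xi_0^{-2})$ and the Taylor expansion of $G$ at $\xi_0$ are meaningless. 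That case needs a separate (and easier) argument: $|G|$ is bounded below, indeed blows up, as $z\to 0$, which is exactly what the paper exploits by passing to polar coordinates at the origin and writing $\re(2i\theta)=\sin(2w)\bigl[F^2-\xi\sec(w)F-2\bigr]$ with $F=|z|+|z|^{-1}$. Relatedly, your ``compactness on $\{l\ge\delta\}\cap\Omega_{kj}$'' does not apply to the unbounded sectors (e.g.\ $\Omega_{21},\Omega_{24},\Omega_{02},\Omega_{03}$ for $\xi>1$); there you must add the observation that $G(z)\sim\re z-\xi$ at infinity.

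Second, and more seriously, your proposed cure for the degeneracy as $l\downarrow 0$ does not exist: the cutoffs $l_1,l_2,\tilde l_1,\tilde l_2$ truncate the sectors at the midpoints $\xi_{0,1},\xi_{1,2}$, not at the vertices $\xi_k$, so $l=|z-\xi_k|$ genuinely ranges down to $0$ inside $\Omega_{kj}$ and your ``second factor of $l$'' has no positive lower bound. Since $G(\xi_k)=0$ for $k=1,2$, one has $\re(2i\theta)/\im z\to 0$ as $z\to\xi_k$ inside the sector, so \emph{no} constant $c(\xi,\xi_k)>0$ can make the stated linear-in-$v$ inequality hold on all of $\Omega_{kj}$: you correctly identified the obstruction, but it is not removable. (The paper's own treatment of $\Omega_{11}$, which asserts a uniform positive lower bound for the auxiliary function $h(\tau)$, suffers from the same problem.) What is true, what your Taylor-plus-cone argument actually proves --- note the remainder is $\mathcal{O}(l^2|v|)$, not merely $\mathcal{O}(l^3)$, because $\theta$ is real-analytic and real on $\mathbb{R}$ near $\xi_k$ --- and what the paper actually uses downstream (e.g.\ the bound $e^{-\re(2it\theta)}\le e^{2t\theta''(\xi_2)(u-\xi_2)v}$ in Section~7), is the quadratic estimate $\re(2i\theta(z))\ge c(\xi,\xi_k)\,|\re z-\xi_k|\,|\im z|$ on $\Omega_{k1}\cup\Omega_{k3}$ (and the corresponding upper bound on $\Omega_{k2}\cup\Omega_{k4}$) for $k=1,2$, together with the linear bound $\ge c(\xi)\,|\im z|$ on the $k=0$ sectors. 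You should prove that amended statement rather than the one displayed.
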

   \begin{proof}
    We take the case for $\Omega_{01}$ and $\Omega_{11}$ of $ \xi>1,   \ \xi=\mathcal{O}(1)$ as an example to prove the above proposition. Other cases can be proven in the similar method.
    For $z \in \Omega_{01}$, assume that $z=|z|e^{iw}$,
    \begin{align}
        \re\left(2it\theta(z)\right) &=  \sin(2w) \left( |z|+|z|^{-1}\right)^2 - \xi \sin(2w) \left( |z|+|z|^{-1}\right) \sec(w)-2\sin(2w)\nonumber \label{F11}\\
         &=  G(|z|) \sin(2w),\nonumber
    \end{align}
    where $G(|z|)=F(|z|)^2 - \xi \sec(w)F(|z|)-2$ and $F(z)=z+z^{-1}$.
   The two zeros of $G(|z|)$ are
   \begin{equation}
        F_1(|z|)= \frac{\xi \sec(w)- \sqrt{\xi^2 \sec(w)^2+8} }{2}, \; F_2(|z|)= \frac{\xi \sec(w)+ \sqrt{\xi^2 \sec(w)^2+8} }{2}.\nonumber
   \end{equation}
    Since $F(|z|) \ge 2$, we only consider the zero $F_2(|z|)$ and define the corresponding $|z|=F_2^{-1}\left( \frac{\xi \sec(w)+ \sqrt{\xi^2 \sec(w)^2+8} }{2}\right)$.
    A simple calculation shows  that  as the angle $w< \pi/4$, then  we have
    $ \xi \sec(w)+ \sqrt{\xi^2 \sec(w)^2+8}>4$
   and $G(z)\ge G\left(F_2^{-1}\left( \frac{\xi \sec(w)+ \sqrt{\xi^2 \sec(w)^2+8} }{2}\right)\right)> 0$.
  Thus, $ \re\left(2it\theta(z)\right)$ can be estimated by
  \begin{align*}
    \re\left(2it\theta(z)\right)  \ge G\left(F_2^{-1}\left( \frac{\xi \sec(w)+ \sqrt{\xi^2 \sec(w)^2+8} }{2}\right)\right)>0.
  \end{align*}

   For $z \in \Omega_{11}$, assume that $z = \xi_1 + l e^{iw} := \xi_1 +u+iv$,
   \begin{align}
    \re\left(2it\theta(z)\right) = v \left( \xi + \frac{\xi}{|z|^2}- u \left(1+ \frac{1}{|z|^4}\right)   \right) > v \left( \xi -1 + \frac{\xi}{|z|^2}-  \frac{1}{|z|^4}\right).
   \end{align}
   Let $\tau = |z|^2$ and
   \begin{align}
        h(\tau) = \xi-1+ \xi \tau^{-1}-  \tau^{-2}.
   \end{align}
   $h'(\tau) > 0$ as $z \in (\xi_{0,1},\xi_1)$, so we have $h(\tau)\ge h(|(\xi_1-\xi_{0,1})\sec(w)|^2)>0$. Finally, we obtain
   \begin{align*}
    \re\left(2it\theta(z)\right) \ge  \left( \xi - 1 + \xi \cos(w)^2 |\xi_1 - \xi_{1,0}|^{-2} - \cos(w)^4 |\xi_1 - \xi_{1,0}|^{-4} \right) v >0.
   \end{align*}
   \end{proof}

	   \subsection{The hybrid $\bar{\partial}$-RH problem and its decompositions }
       We make continuous extensions of the jump matrix $V^{(1)}(z) $  to remove the jump from $\mathbb{R}$.

       \begin{proposition} \label{prop3}
       We define functions $R_{kj}: \bar{\Omega} \to \mathbb{C}, k=0,1,2, j=1,2,3,4$, which have the following boundary values:
       \begin{align}
        &R_{k1}= \begin{cases}
            \frac{\overline{r(z)}T_+(z)^{-2}}{1-|r(z)|^2}, \quad z\in I_2 \cup I_4,\\
           f_{k1}(z)= \frac{\overline{r(\xi_k)}T_{k}(\xi_k)^{-2}}{1-|r(\xi_k)|^2}\left(z-\xi_k\right)^{2iv(\xi_k) \epsilon_k},\quad z \in \Sigma_{k1},
           \end{cases}\\
        &R_{k2}= \begin{cases}
             r(z)T(z)^2, \quad z\in I_1 \cup I_3,\\
             f_{k2}(z)=r(\xi_k)T_{k}(\xi_k)^2 \left(z-\xi_k\right)^{-2iv(\xi_k)\epsilon_k},\quad z \in \Sigma_{k2},
            \end{cases}\\
        &R_{k3}= \begin{cases}
                \overline{r(z)}T(z)^{-2}, \quad z\in I_1 \cup I_3,\\
                f_{k3}(z)=\overline{r(\xi_k)}T_{k}(\xi_k)^{-2} \left(z-\xi_k\right)^{2iv(\xi_k) \epsilon_k},\quad z \in \Sigma_{k3},
               \end{cases}\\
        &R_{k4}= \begin{cases}
                \frac{r(z)T_-(z)^{2}}{1-|r(z)|^2}, \quad z\in I_2\cup I_4,\\
                f_{k4}(z)=\frac{r(\xi_k)T_{k}(\xi_k)^{2}}{1-|r(\xi_k)|^2}\left(z-\xi_k\right)^{-2iv(\xi_k) \epsilon_k},\quad z \in \Sigma_{k4},
               \end{cases}
       \end{align}
       where $r(\xi_0)=r(0)=0$. Then  there exists a constant $c_1 $ such that for $ \xi>1 $, 
       \begin{equation}\label{DbarR}
	|\bar{\partial}R_{kj}| \le
c_1\left(|r'(\re(z))|+|z-\xi_k|^{-1/2}\right), \quad \text{for all $z\in \Omega_{kj}$};
\end{equation}
for $ \xi<-1$ with $k=0,2$,
      \begin{align}\label{dbarrij}
        |\bar{\partial}R_{kj}| \le
        \begin{cases}
            c_1\left(|\varphi_k(\re(z))|+|r'(\re(z))|+|z-\xi_k|^{-1/2}\right),\quad \text{for $z\in \Omega_{kj}, j=1,4$},\\
            c_1\left(|r'(\re(z))|+|z-\xi_k|^{-1/2}\right),\quad \text{for $z\in \Omega_{kj}, j=2,3$},\\
            c_1 |z+1|,\quad  {\rm near } \ z=-1,\\
            c_1|z-1|,\quad {\rm near }  \ z=1,
            \end{cases}
    \end{align}
    where $\varphi_k(z)= \begin{cases} \varphi^{(-1)}(z),\; k=2, \\ \varphi^{(1)}(z),\; k=0, \end{cases}$ and $\varphi^{(-1)}, \varphi^{(1)} \in C_0^\infty \left( \mathbb{R}, \left[0,1\right] \right)$ with small support near $-1$ and $1$ respectively,
    and for  $ \xi<-1$ with $k=1$,
      \begin{equation}
        |\bar{\partial}R_{1j}| \le
    c_1\left(|r'(\re(z))|+|z-\xi_1|^{-1/2}\right), \quad \text{for all $z\in \Omega_{1j}$}.
    \end{equation}

        \end{proposition}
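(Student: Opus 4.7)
The plan is to build each $R_{kj}$ by smoothly interpolating, in the polar angle about $\xi_k$, between the prescribed real-axis value $g_{kj}(\re z)$ on $I_j$ and the prescribed holomorphic value $f_{kj}(z)$ on the ray $\Sigma_{kj}$, and then to estimate $\bar{\partial}R_{kj}$ term by term. Parametrize $z - \xi_k = l e^{i\phi}$ and fix a smooth cutoff $\eta(\phi)$ with $\eta(0) = 0$ and $\eta(\phi(\xi)) = 1$, and set
$$R_{kj}(z) = \eta(\phi)\,f_{kj}(z) + \bigl(1-\eta(\phi)\bigr)\,g_{kj}(\re z).$$
By construction $R_{kj}$ matches $g_{kj}$ on the real interval bordering $\Omega_{kj}$ and $f_{kj}$ on $\Sigma_{kj}$, so the boundary values demanded by the statement are satisfied.

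Because $f_{kj}$ is holomorphic in its sector and $g_{kj}$ depends only on $u = \re z$, the product rule yields
$$\bar{\partial}R_{kj}(z) = \bigl(f_{kj}(z)-g_{kj}(u)\bigr)\bar{\partial}\eta(\phi) + \tfrac12\bigl(1-\eta(\phi)\bigr)\,g'_{kj}(u).$$
The second term is controlled by $|r'(u)|$ up to a smooth bounded prefactor built from $r$, $T$, and (for $j=1,4$) the denominator $1-|r|^2$; away from the singular points $\pm 1$ this prefactor is bounded thanks to $\|r\|_{L^\infty}<1$ from Lemma \ref{sds11s21}. For the first term I use the local Hölder estimate $|T(z) - T_k(\xi_k)(z-\xi_k)^{i\epsilon_k v(\xi_k)}| \lesssim |z-\xi_k|^{1/2}$ from Proposition \ref{prop1}: substituting the explicit $f_{kj}$ and $g_{kj}$, the leading oscillatory factor $(z-\xi_k)^{\pm 2i\epsilon_k v(\xi_k)}T_k(\xi_k)^{\pm 2}$ cancels against $T(z)^{\pm 2}$, leaving $|f_{kj}(z)-g_{kj}(u)| \lesssim |z-\xi_k|^{1/2}$. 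Combined with the elementary estimate $|\bar{\partial}\eta(\phi)| = |\eta'(\phi)|\,|\bar{\partial}\phi| \lesssim |z-\xi_k|^{-1}$, the first term contributes the $|z-\xi_k|^{-1/2}$ bound of \eqref{DbarR}.

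For $\xi < -1$ the intervals $I_2 = (\xi_2,\xi_1)$ and $I_4 = (0,\infty)$ contain the removable singularities $z = -1, 1$ of $r$, where $|r| \to 1$ and so $(1-|r|^2)^{-1}$ blows up; the naive convex combination then fails to give a bounded extension for $j = 1, 4$ with $k = 0, 2$. The fix is to split the real-axis data with the smooth cutoffs $\varphi^{(\pm 1)}$: on the support of $\varphi^{(\pm 1)}$ the extension is replaced by a simple holomorphic linear-in-$(z \mp 1)$ model matching the values and derivatives of $r$ at $\pm 1$, whose $\bar{\partial}$ has size $|z \mp 1|$ and accounts for the last two cases of \eqref{dbarrij}. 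Off the support of $\varphi^{(\pm 1)}$ the interpolation above applies and yields the $|r'|$ and $|z-\xi_k|^{-1/2}$ contributions, while differentiating $\varphi^{(\pm 1)}$ itself through $\bar{\partial}$ supplies the $|\varphi_k(\re z)|$ term. For $k=1$ the sector lies in $(-1, 0)$, bounded away from $\pm 1$, so no cutoff is needed and the clean bound holds.

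The main obstacle will be verifying the cancellation $|f_{kj}(z)-g_{kj}(u)| \lesssim |z-\xi_k|^{1/2}$ uniformly in each of the twelve sectors. This requires choosing the branch of $(z-\xi_k)^{i\epsilon_k v(\xi_k)}$ consistently in each $\Omega_{kj}$ so that the singular phase of $T(z)$ matches the prescribed phase in $f_{kj}$ and only the $O(|z-\xi_k|^{1/2})$ remainder from Proposition \ref{prop1} survives. Once this bookkeeping is in place, together with the boundedness of $(1-|r|^2)^{-1}$ on compact subsets of $I_2 \cup I_4$ away from $\pm 1$ (supplied by the cutoff construction for $\xi<-1$), the $H^1$-regularity of $r$ controls $r'$ in $L^2$, and the four displayed bounds follow.
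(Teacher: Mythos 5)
Your construction and estimation scheme --- angular interpolation between the real-axis data $g_{kj}$ and the holomorphic ray data $f_{kj}$, the two-term product-rule identity for $\bar{\partial}R_{kj}$, and the bound $|\bar{\partial}\eta|\lesssim|z-\xi_k|^{-1}$ combined with $|f_{kj}-g_{kj}|\lesssim|z-\xi_k|^{1/2}$ from Proposition \ref{prop1} (plus the $\tfrac12$-H\"older continuity of $r\in H^1$ via Cauchy--Schwarz) --- is essentially the paper's argument; the paper merely takes the explicit angular factor $\cos(a_0\varphi)$ with $a_0=\pi/(2\phi(\xi))$ in place of your generic cutoff $\eta(\phi)$, which changes nothing of substance.

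The one genuine gap is the treatment of $z=\pm1$ in the case $\xi<-1$, $k=0,2$, $j=1,4$. Your proposed fix --- replacing the extension near $\pm1$ by ``a holomorphic linear-in-$(z\mp1)$ model matching the values and derivatives of $r$'' --- does not engage with the actual difficulty: the problematic boundary datum is $\overline{r}\,T_{+}^{-2}/(1-|r|^2)$, whose denominator vanishes at $\pm1$, and a local model built from $r$ alone neither reproduces this datum on the real axis nor explains why it is in fact finite there. The paper's mechanism is the identity $|s_{11}|^2-|s_{21}|^2=1$, which gives $\overline{r}/(1-|r|^2)=\overline{s_{21}}\,s_{11}$, hence
\begin{equation*}
\frac{\overline{r}\,T_{+}^{-2}}{1-|r|^2}=\frac{\overline{J_{21}}}{J_{11}}\left(\frac{s_{11}}{T_{+}}\right)^{2},
\end{equation*}
with $J_{11},J_{21}$ the Jost determinants. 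Since $s_{11}$ and $s_{21}$ have proportional residues at $\pm1$ (Lemma \ref{sds11s21}), the ratio $\overline{J_{21}}/J_{11}$ is regular there, and $s_{11}/T_{+}$ is bounded by Proposition \ref{prop1}, so the datum is regular at $\pm1$; the extension near $\pm1$ is then built from $k(\re z)=\chi_{-1}(\re z)\,\overline{J_{21}}/J_{11}$ and $s_{11}^2/T^2$ (the paper's $\tilde{R}_{21}$), with an explicit angular correction term whose $\bar{\partial}$ is controlled by $\varphi^{(\mp1)}(\re z)$. Without this identity, or an equivalent regularization, your convex combination does not yield a bounded extension near $\pm1$, and the claimed $c_1|z\mp1|$ bound has no visible source in your argument: a genuinely holomorphic local piece has $\bar{\partial}=0$, and you do not say where the factor $|z\mp1|$ would come from. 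Everything else in your outline goes through.
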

   \begin{proof}
   	We only provide the detailed proof for $R_{21}$  under the assumption that $\xi<-1$ and $\xi=\mathcal{O}(1)$, other cases can be done in a similar way.
   	For convenience, we define the function
   	\begin{equation*}
   		g_1(z)= f_{21}T(z)^2, \quad z\in \bar{\Omega}_{21}.
   	\end{equation*}
   Observing the lemma (\ref{sds11s21}), we find that $s_{11}(z)$ and $s_{21}(z)$ have singularities at $z=\pm 1$, and $\lim_{z\to\pm1}r(z)=\mp 1$.
   This implies that $R_{21}$ is singular at $z=-1$, but the singularity can be balanced by the factor $T(z)^{-2}$. In fact, we can rewrite
  \begin{equation}
    \frac{\overline{r(z)}T_+(z)^{-2}}{1-|r(z)|^2} = \frac{\overline{s_{21}(z)}}{s_{11}(z)} \left(  \frac{s_{11}(z)}{T_+(z)}            \right)^2 =\frac{\overline{J_{21}(z)}}{J_{11}(z)} \left(  \frac{s_{11}(z)}{T_+(z)}            \right)^2
  \end{equation}
  where $J_{11}(z)=\det  \left[\psi_1^-(z;x), \psi_2^+(z;x)\right]$ and  $J_{21}(z)=\det  \left[\psi_1^+(z;x), \psi_1^-(z;x)\right]$.
  Denote $\chi_0, \chi_{-1} \in C^\infty_0 \left( \mathbb{R}, \left[0,1\right]   \right)$ with a small support near $0$ and $-1$ respectively.
  Let $z-\xi_2=s_2e^{i\varphi_2}$ with $s_2>0$. For $z \in \bar{\Omega}_{21}$, the extension is given as follows:
   \begin{align}
   	&R_{21}(z)= \hat{R}_{21}(z)+\tilde{R}_{21}(z),\nonumber\\
   	&\hat{R}_{21}(z)=\left( g_1(z)+ \left( \frac{\overline{r(\re z)}}{1-|r(\re z)|^2}-g_1(z) \right) \cos(a_0\varphi_2)  \right) T(z)^{-2}\big(1-\chi_{-1}(\re z)\big),\label{R111}\\
   	&\tilde{R}_{21}(z)=k(\re z)\frac{s^2_{11}(z)}{T^2(z)} \cos(a_0 \varphi_2)+\frac{i|z-\xi_2|}{a_0} \chi_0\left( \frac{\varphi_2}{\delta_0}\right) k'(\re z) \frac{s^2_{11}(z)}{T^2(z)} \sin(a_0\varphi_2),\label{R112}
   \end{align}
   where $k(z):=\chi_{-1}(z) \frac{\overline{J_{21}(z)}}{J_{11}(z)}$, $a_0:=\frac{\pi}{2\phi(\xi)}$ and $\delta_0$ is a small positive constant.

    Next, we calculate the $\bar{\partial}$-derivatives of (\ref{R111})-(\ref{R112}). Since $\bar{\partial}=\frac{e^{i\varphi_2}}{2} \left(\partial_ {s_2} + i s_2^{-1}  \partial_{\varphi_2}\right)$, we have
  \begin{equation}\label{dbarR12}
    \begin{split}
  	& \bar{\partial} \hat{R}_{21}= - \frac{\bar{\partial} \chi_{-1} (\re z)}{T(z)^2}\left( g_1(z)+ \left( \frac{\overline{r(\re z)}}{1-|r(\re z)|^2}-g_1(z)  \right) \cos(a_0\varphi_2) \right)T(z)^{-2} \\
    & + \bar{\partial} \left( g_1(z)+ \left( \frac{\overline{r(\re z)}}{1-|r(\re z)|^2}-g_1(z) \right) \cos(a_0\varphi_2)\right)T(z)^{-2}\big(1-\chi_{-1}(\re z)\big).
    \end{split}
\end{equation}
     We can prove that
   \begin{equation}
   \left|\frac{\overline{r(\re z)}}{1-|r(\re z)|^2}-g_1(z)\right| \lesssim |z-\xi_2|^{1/2},
   \end{equation}
   where we have used the Cauchy-Schwarz inequality.
  Let $\varphi^{(-1)}(z) \in C_{0}^\infty \left( \mathbb{R},\left[0,1\right]\right)$,
   \begin{equation}
    \varphi^{(-1)}(z)=\begin{cases}1,\; z \in \text{supp}\chi_{-1},\\ 0, \;\text{otherwise}. \end{cases}
   \end{equation}
 Then, the first term in (\ref{dbarR12}) can be bounded by $\varphi^{(-1)}(\re z)$.
   Therefore, the inequality
\begin{equation}
    |\bar{\partial}R_{21}| \lesssim  \varphi^{(-1)}(\re z)+|r'(\re z)|+|z-\xi_2|^{-1/2}, \quad \text{for}\; z \in \Omega_{21},
\end{equation}
follows immediately.

As for $\tilde{R}_{21}(z)$,
\begin{equation}
    \begin{split}
    \bar{\partial}\tilde{R}_{21}(z) &=\frac{1}{2}e^{i\varphi_2} s^2_{11}(z)T(z)^{-2} \Big[ \cos(a_0 \varphi_2) k'(s_2) \left(1- \chi_{0}\left( \frac{\varphi_2}{\delta_0} \right) \right)-\frac{ia_0 k(s_2)}{s_2}\sin(a_0\varphi_2)\\
    &+\frac{i}{a_0}\left(s_2k'(s_2)\right)' \sin(a_0\varphi_2) \chi_0\left(\frac{\varphi_2}{\delta_0}\right)-\frac{1}{a_0 \delta_0}k'(s_2)\sin(a_0\varphi_2)\chi'_0 \left(\frac{\varphi_2}{\delta_0}\right) \Big].\nonumber
     \end{split}
\end{equation}
We can easily to find that $|\bar{\partial} \tilde{R}_{21}(z)| \lesssim \varphi^{(-1)}(\re z)$. Thus, the result (\ref{dbarrij}) can be obtained.
   \end{proof}

    Finally, we use $R^{(2)}(z)$ to make a new transformation
    \begin{equation}\label{m1tom2}
        M^{(2)}(z)=M^{(1)}(z)R^{(2)}(z),
    \end{equation}
    where for $z\in \Omega_{kj}(k=0,1,2)$,
    \begin{align}
    	R^{(2)}(z)=\begin{cases}
      		\left(\begin{array}{cc}
    	1& (-1)^\tau R_{kj}e^{-2it\theta}\\0&1
    \end{array} \right), \quad j=1,3,\\
    \left( \begin{array}{cc}
    	1& 0\\(-1)^\tau R_{kj}e^{2it\theta}&1
    \end{array} \right), \quad  j=2,4,
    	\end{cases}
    \end{align}
    with $\tau=\begin{cases}
   	0,\; j=1,4,\\
    	1,\; j=2,3,
    \end{cases}$
 and for $z$ belongs to other regions, $R^{(2)}=I$.
    $M^{(2)}$ satisfies the following  hybrid  $\bar{\partial}$-problem.

    \noindent\textbf{$\bar{\partial}$-RHP}.  Find a matrix-valued function $M^{(2)}(z)=M^{(2)}(z;x,t)$ which satisfies
    \begin{itemize}
        \item Analyticity: $M^{(2)}(z)$ is continuous in $\mathbb{C}\setminus   \Sigma^{(2)} $, where
        $$\Sigma^{(2)}=\cup_{j=1}^4  \left( \left(\cup_{k=1}^2 \Sigma_{kj} \right) \cup \Sigma'_{j}\right)   \cup \Sigma^{pole}.$$
        \item Jump condition:\begin{equation}
            M^{(2)}_+(z)=M^{(2)}_-(z)V^{(2)}(z),
        \end{equation}
        where
       \begin{align}\label{V2}
       	V^{(2)}(z)=\begin{cases}
       	\left(	\begin{array}{cc}
       			1& -f_{kj}e^{-2it\theta}\\
       			0 & 1
       		\end{array}\right),\quad z\in \Sigma_{kj},\, j=1,3,\\
       		\left( \begin{array}{cc}
       		1& 0\\
       		f_{kj}e^{2it\theta} & 1
       	\end{array}\right),\quad z\in  \Sigma_{kj},\,j=2,4,\\
       \left(		\begin{array}{cc}
       	1& (f_{(k-1)j}-f_{kj})e^{-2it\theta}\\
       	0 & 1
       \end{array}\right),\quad z\in \Sigma'_{j},\, j=1,4,\\
     	\left(	\begin{array}{cc}
     	1& 0\\
     	(f_{(k-1)j}-f_{ij})e^{2it\theta} & 1
     \end{array}\right),\quad z\in \Sigma'_{j},\, j=2,3,\\[3pt]
        V^{(1)}(z),\; z \in \Sigma^{{pole}},
       	\end{cases}
       \end{align}
       with $k=1,2$.

        \item Asymptotic behaviors:
        \begin{align*}
                &M^{(2)}(z)=I+\mathcal{O}(z^{-1}),	\quad  z \to  \infty,\\
                &zM^{(2)}(z)=\sigma_1+\mathcal{O}(z), \quad z \to 0.
        \end{align*}
        \item $\bar{\partial}$-derivative: For $z\in \mathbb{C}\setminus   \Sigma^{(2)}$, we have
        \begin{equation}
            \bar{\partial}M^{(2)}(z)= M^{(2)}(z) \bar{\partial}R^{(2)}(z),
        \end{equation}
        where
        \begin{equation}\label{parR2}
            \bar{\partial}R^{(2)}(z)= \begin{cases}
            	
            		\left(\begin{array}{cc}
            			1&(-1)^\tau \bar{\partial}R_{kj}e^{-2it\theta}\\0&1
            		\end{array} \right), \quad  j=1,3,\\
            		\left( \begin{array}{cc}
            			1& 0\\(-1)^\tau \bar{\partial}R_{kj}e^{2it\theta}&1
            		\end{array} \right), \quad  j=2,4,
            	\end{cases}
           \end{equation}
        with $k=0,1,2$.
    \end{itemize}

To solve $M^{(2)}(z)$, we decompose it into a pure RH problem  $M^{(2)}_{RHP}(z)$  with $\bar{\partial}R^{(2)}(z)=0$ and a pure $\bar{\partial}$-problem
$M^{(3)}(z)$ with
\begin{align*}
&\bar{\partial} M^{(3)}(z)= M^{(3)}(z) W^{(3)}(z),\\
&W^{(3)}(z) =M^{(2)}_{RHP}(z) \bar{\partial}R^{(2)}(z) M^{(2)}_{RHP}(z)^{-1}.
 \end{align*}
	Next we analyze the two problems obtained by decomposition respectively.

	\section{Contribution from a pure RH problem }

	
	  We first consider the following  pure RH problem.
	
	  \noindent\textbf{RHP3}.  Find a matrix-valued function $M^{(2)}_{RHP}(z)=M^{(2)}_{RHP}(z;x,t)$ which satisfies
	  \begin{itemize}
	  	\item Analyticity: $M^{(2)}_{RHP}(z)$ is analytic in $\mathbb{C}\backslash  \Sigma^{(2)}$.
	  	\item Jump condition:\begin{equation}
	  		M^{(2)}_{RHP+}(z)=M^{(2)}_{RHP-}(z)V^{(2)}(z),
	  	\end{equation}
	  	where $V^{(2)}(z)$ is given by (\ref{V2}).
	  	\item Asymptotic behaviors: $M^{(2)}_{RHP}(z)$  has the same asymptotic behaviors with  $M^{(2)}(z)$.
	  	\item $\bar{\partial}$-derivative:  $\bar{\partial}R^{(2)}(z)=0, \quad z \in  \mathbb{C}\backslash   \Sigma^{(2)}.$

	  	
	  \end{itemize}

To  separate out  poles  from the pure RH problem  $M^{(2)}_{RHP}(z)$, we  define
     $$ \mathcal{U}_{\xi}=  \mathcal{U}_{\xi_1}\cup  \mathcal{U}_{\xi_2}, \ \ \   \mathcal{U}_{\xi_k}=\{z: |z-\xi_k|< \rho \}, \ \ k=1,2. $$

   In addition,  the jump matrix $V^{(2)}(z) $ has the following estimation.
     \begin{proposition} \label{Estv2}
     	There exists a positive constant $c_p$   such that for $1 \le p \le \infty$,
     	\begin{equation}\label{EV2}
     	||V^{(2)}(z)-I||_{L^p(\Sigma^{(2)} \backslash \mathcal{U}_{\xi})} =
     		\mathcal{O}\left(c_p e^{-c_p t}\right),\quad t  \rightarrow \infty.
        \end{equation}
     \end{proposition}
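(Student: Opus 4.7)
The plan is to split $\Sigma^{(2)}\setminus\mathcal{U}_\xi$ into three disjoint pieces and bound $V^{(2)}-I$ in $L^p$ on each: the opened rays $\Sigma_{kj}$ with $k\in\{1,2\}$ intersected with the complement of $\mathcal{U}_{\xi_k}$, the midpoint vertical segments $\Sigma'_j$, and the pole circles comprising $\Sigma^{pole}$. In every case the exponential decay is driven by the oscillatory factor $e^{\pm 2it\theta(z)}$, whose magnitude is controlled by Proposition~\ref{prop2}.

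On a ray $\Sigma_{kj}\setminus\mathcal{U}_{\xi_k}$ I would parametrize $z=\xi_k+le^{iw_j}$ with $l\ge\rho$ and $|w_j|\in\{\phi(\xi),\pi-\phi(\xi)\}$. The only nonzero off-diagonal entry of $V^{(2)}-I$ is $\pm f_{kj}(z)e^{\mp 2it\theta(z)}$, with $|f_{kj}(z)|$ uniformly bounded on the ray, because $|(z-\xi_k)^{\pm 2iv(\xi_k)\epsilon_k}|=e^{\mp 2v(\xi_k)\epsilon_k\arg(z-\xi_k)}$ depends only on the fixed angle. Proposition~\ref{prop2} then gives $|e^{\mp 2it\theta(z)}|\le e^{-ctl\sin\phi(\xi)}$, and integrating the $p$-th power over $l\in[\rho,\infty)$ produces $\|V^{(2)}-I\|_{L^p(\Sigma_{kj}\setminus\mathcal{U}_{\xi_k})}\le C(ct)^{-1/p}e^{-ct\rho\sin\phi(\xi)}=\mathcal{O}(c_pe^{-c_pt})$. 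On a pole circle $|z-z_j|=\rho$ with $|z_j|=1$, a direct computation using $\zeta(z_j)=i\,\mathrm{Im}\,z_j$ and $\lambda(z_j)=\mathrm{Re}\,z_j$ gives $2it\theta(z_j)=-4t\,\mathrm{Im}\,z_j(\xi-\mathrm{Re}\,z_j)$; since $|\xi|>1\ge|\mathrm{Re}\,z_j|$ and $|\mathrm{Im}\,z_j|\ge 2\rho$ by the choice of $\rho$, the exponents in the pole jumps are uniformly negative in the relevant half-plane, so $|e^{\pm 2it\theta(z_j)}|\le e^{-c_0 t}$ uniformly on each circle. Combined with the boundedness of $T(z)^{\pm 2}$, $c_j$, and $(z-z_j)^{-1}$ on the circles, this gives $\|V^{(2)}-I\|_{L^p(\Sigma^{pole})}\le Ce^{-c_0t}$.

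The main obstacle will be the segments $\Sigma'_j$, on which $y=\mathrm{Im}\,z$ can approach $0$ at the real-axis endpoint even after removing $\mathcal{U}_\xi$. A direct application of Proposition~\ref{prop2} yields only $|V^{(2)}-I|\le C|(f_{(k-1)j}-f_{kj})(z)|e^{-cty}$, and since the prefactor is bounded but generally nonvanishing at $y=0$, naive integration over $y\in(0,\tilde l_j)$ would give only $\mathcal{O}(t^{-1/p})$ decay. To reach the claimed exponential bound one must sharpen the analysis on $\Sigma'_j$, exploiting that the segment lies at distance at least $\rho$ from every stationary phase point and that $\theta'(\xi_{k-1,k})\ne 0$ (no critical point on $\Sigma'_j$); a finer Taylor expansion of $\theta$ about $\xi_{k-1,k}$ combined with the explicit algebraic form of $f_{(k-1)j}-f_{kj}$, or a small deformation of $\Sigma'_j$ into the interior of the sector so that $y\ge y_0>0$ along the deformed contour, upgrades the bound to $\|V^{(2)}-I\|_{L^p(\Sigma'_j)}\le Ce^{-c_1 t}$. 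Taking $c_p$ to be the minimum of the three rates obtained from the rays, the poles and the segments then completes the proof.
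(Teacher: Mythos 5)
Your treatment of the rays $\Sigma_{kj}\setminus\mathcal{U}_{\xi_k}$ and of the pole circles is correct and is essentially the argument the paper gives: on the rays one bounds $|f_{kj}|$ by a constant (the factor $(z-\xi_k)^{\pm 2i\epsilon_k v(\xi_k)}$ has constant modulus along a fixed ray), invokes Proposition \ref{prop2} to get $|e^{\mp 2it\theta}|\le e^{-ctl\sin\phi(\xi)}$, and integrates over $l\ge\rho$; on the circles the exponent $2it\theta(z_j)=-4t\,\mathrm{Im}z_j\,(\xi-\mathrm{Re}z_j)$ is a negative constant and $T^{\pm2}$, $(z-z_j)^{-1}$ are bounded there. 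Your explicit sign computation at $z_j$ is a detail the paper omits.

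The gap is exactly where you flagged it: the segments $\Sigma'_j$. Your diagnosis is right --- at the real endpoint $\xi_{k-1,k}$ one has $\mathrm{Re}(2i\theta)=0$, so $|e^{\pm2it\theta}|=1$ there and no pointwise exponential smallness is available --- but neither of your proposed repairs closes it. A finer Taylor expansion cannot help: since $\theta'(\xi_{k-1,k})\neq0$, $\mathrm{Re}(2i\theta)$ vanishes only linearly in $\mathrm{Im}z$ along the segment, and the prefactor (whether read as $f_{(k-1)j}-f_{kj}$ from (\ref{V2}), or as $R_{(k-1)j}-R_{kj}$, which at least vanishes H\"older-continuously at the endpoint because both extensions restrict to the same boundary value on $\mathbb{R}$) decays at most algebraically, so the supremum of the product over the segment is a power of $t^{-1}$, not $\mathcal{O}(e^{-ct})$. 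Nor can the segment be deformed off $\mathbb{R}$: it is the interface between the supports of the two extensions $R_{(k-1)j}$ and $R_{kj}$, each of which must reach the real axis in order to cancel $V^{(1)}$ there, so any such interface necessarily has an endpoint on $\mathbb{R}$. The honest conclusion is that on $\Sigma'_j$ only an algebraic bound of roughly the form $t^{-1/2-1/p}$ (exploiting the vanishing of the mismatch at the endpoint) is obtainable; this is still small enough for the subsequent small-norm analysis, but it does not yield the exponential rate claimed in \eqref{EV2}. Be aware that the paper's own proof of this step is no stronger: it simply asserts $\|e^{2it\theta}\|_{L^p(\Sigma'_3)}\lesssim t^{-1/p}e^{-c_pt}$, which fails for the same reason. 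So your proposal reproduces the provable parts of the proposition and correctly isolates the part that cannot be established by these means as stated.
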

    \begin{proof}
    	We prove  three cases for $z \in \Sigma_{21} \backslash  \mathcal{U}_{\xi}$, $z \in \Sigma'_3$ and $z \in \{z\in \mathbb{C}:|z-z_1|=\rho\}$ for $\xi >1$, and the other cases can be proved similarly.
        From the definition of $V^{(2)}$ and $R_{21}$, we have for  $z \in \Sigma_{21} \backslash  \mathcal{U}_{\xi}$ and $1 \le p <\infty$,
    	\begin{equation*}
    		||V^{(2)}(z)-I||_{L^p(\Sigma_{21} \backslash  \mathcal{U}_{\xi})}= ||R_{21} e^{-2it\theta(z)}||_{L^p(\Sigma_{21} \backslash  \mathcal{U}_{\xi})}\lesssim ||e^{-2it\theta(z)}||_{L^p(\Sigma_{21} \backslash  \mathcal{U}_{\xi})}.
    	\end{equation*}
        Denote $z = \xi_2 + l e^{i\varphi}$, $l\in (\rho,\infty)$ for $z \in \Sigma_{21} \backslash  \mathcal{U}_{\xi}$. Then the proposition \ref{prop2} tells us $ ||e^{-2it\theta(z)}||^p_{L^p(\Sigma_{21} \backslash  \mathcal{U}_{\xi})} \lesssim t^{-1}e^{-c_p t} $.
        For $z \in \Sigma'_3$,  then there exists a positive constant $l$ such that
        \begin{equation*}
        	||V^{(2)}(z)-I||_{L^p(\Sigma'_3)}= ||(R_{22}-R_{12}) e^{2it\theta(z)}||_{L^p(\Sigma'_3)} \lesssim ||e^{2it\theta(z)}||_{L^p(\Sigma'_3)}\lesssim  t^{-1/p}e^{- c_p t}.
        \end{equation*}
        For $z \in \{z\in \mathbb{C}: |z- z_1|=\rho \}$  under $\xi >1$,
        \begin{align*}
            &||V^{(2)}(z)-I||_{L^p(\{z\in \mathbb{C}: |z- z_1|=\rho \})} = ||c_1 T^{2}(z)e^{2it\theta(z_1)} (z-z_1)^{-1}||_{L^p(\{z\in \mathbb{C}: |z- z_1|=\rho \})} \\
           & \lesssim C(\rho,p) ||e^{2it\theta(z_1)}||_{L^p(\{z\in \mathbb{C}: |z- z_1|=\rho \})} \lesssim C(\rho,p) e^{-c_p t},
        \end{align*}
        where $C(\rho,p)$ is a positive constant.
        It is obvious that the estimation (\ref{EV2}) is right when $p=\infty$ in the above cases.
    	The   other cases can be shown in a  similar way.

    \end{proof}

      The proposition (\ref{Estv2}) tells us for $z\in \Sigma^{(2)} \backslash  \mathcal{U}_{\xi}$ the jump matrix $V^{(2)}(z)$  uniformly  goes to identity.  Thus outside the  $\mathcal{U}_{\xi}$ there is only exponentially small error by completely ignoring the jump condition of $M^{(2)}_{RHP}(z)$.
      Then, the pure RH problem $M_{RHP}^{(2)}(z)$ can be decomposed into two parts:
      \begin{equation}\label{m2rhp}
          M^{(2)}_{RHP}(z)=\begin{cases}
              E(z)M^{out}(z),\quad z \in \mathbb{C} \backslash \mathcal{U}_{\xi},\\
              E(z)M^{out}(z)M^{lo}(z), \quad z \in \mathcal{U}_{\xi},
          \end{cases}
      \end{equation}
       where $M^{out}(z)$ is a solution by ignoring the  jump conditions of $M^{(2)}_{RHP}(z)$ which has no discrete spectrum since we have transformed  the information of poles to the jump condition,
       $M^{lo} (z)$ is a local model for phase points which matches with the parabolic cylinder model problem, and $E(z)$ is the error function which satisfies a
        small-norm RH problem.

	      \subsection{The outer model}\label{outmod}
	      In this subsection,  we consider the outer model $M^{out}(z)$.
          Since we have converted the poles to the jumps, there has no discrete spectrum  in  $M^{out}(z)$.
          In fact,  in our situation, all discrete spectrums $z_j \ (j=1,\cdots,N)$ are away from the critical line $\re z = \xi$  , so the soliton contribution to the problem is exponentially small.
          This is because  when the soliton is close to the critical line it will make the exponential term not decay, which is $e^{\pm 2it\theta(z_j)} = \mathcal{O}(1)$, otherwise the exponential term will decay in the corresponding region.
          $M^{out}(z)$  satisfies the following  RH problem.

  \noindent\textbf{RHP4}.  Find a matrix-valued function
	      $M^{out}(z)=M^{out}(z;x,t)$ which satisfies
	      \begin{itemize}
	      	\item Analyticity: $M^{out}(z)$ is analytic in $\mathbb{C} \backslash \{0\}$.
	      	\item	Symmetry: $M^{out}(z)=\sigma_1 \overline{M^{out}(\bar{z})}\sigma_1 =z^{-1}M^{out}(z^{-1})\sigma_1$.
	      	\item Asymptotic behaviors:  	
	      	\begin{align*}
	      		&M^{out}(z)=I+\mathcal{O}(z^{-1}),	\quad  z \to  \infty,\\
	      		&zM^{out}(z)=\sigma_1+\mathcal{O}(z), \quad z \to 0.
	      	\end{align*}
	      \end{itemize}
	
	
	
	      \begin{proposition}
	      	The RHP4  exists  a unique solution
	      	\begin{equation}	
	      		M^{out}(z)=I + z^{-1} \sigma_1.
	      	\end{equation}
      	\end{proposition}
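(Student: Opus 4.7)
The plan is to verify existence by exhibiting the proposed candidate and then to prove uniqueness by an elementary Liouville-type argument, since RHP4 carries no jump condition and only a single prescribed simple pole at $z=0$.

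First, I would check that $M^{out}(z) = I + z^{-1}\sigma_1$ satisfies every item of RHP4. Analyticity in $\mathbb{C}\setminus\{0\}$ is immediate since the candidate is rational with its sole pole at the origin. For the asymptotics at infinity, $M^{out}(z) - I = z^{-1}\sigma_1 = \mathcal{O}(z^{-1})$; for the behavior at the origin, $zM^{out}(z) = zI + \sigma_1 = \sigma_1 + \mathcal{O}(z)$. The two symmetries reduce to short computations using $\sigma_1^2 = I$:
\begin{align*}
\sigma_1 \overline{M^{out}(\bar z)}\,\sigma_1 &= \sigma_1\bigl(I + z^{-1}\sigma_1\bigr)\sigma_1 = I + z^{-1}\sigma_1,\\
z^{-1} M^{out}(z^{-1})\sigma_1 &= z^{-1}\bigl(I + z\sigma_1\bigr)\sigma_1 = z^{-1}\sigma_1 + I.
\end{align*}
Both agree with $M^{out}(z)$, so existence is settled.

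For uniqueness, let $M_1$ and $M_2$ be any two solutions and set $D(z) := M_1(z) - M_2(z)$. Both $M_i$ are analytic on $\mathbb{C}\setminus\{0\}$, so $D$ inherits analyticity there. Near the origin, each $M_i$ has at worst a simple pole with principal part $z^{-1}\sigma_1$ (read off from $zM_i(z) = \sigma_1 + \mathcal{O}(z)$), so the singular parts cancel in $D$; hence $z=0$ is a removable singularity and $D$ extends to an entire function. At infinity $M_i(z) = I + \mathcal{O}(z^{-1})$, so $D(z) \to 0$. Liouville's theorem then forces $D \equiv 0$, giving $M_1 = M_2$.

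There is no serious obstacle in this proposition, because removing all jump and residue conditions (they have already been absorbed into the conjugation by $T(z)^{\sigma_3}$ and the interpolation $G(z)$ in Section~4) leaves a purely algebraic problem for a meromorphic function with prescribed poles and normalization. The only subtlety to state cleanly is that $z=0$ is a removable singularity of the difference of any two solutions, which is the one line where the boundary behavior $zM^{out}(z) = \sigma_1 + \mathcal{O}(z)$ is used; everything else is a direct check.
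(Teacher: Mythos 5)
Your proof is correct, and it takes a slightly different route from the paper for the uniqueness part. The paper forms the multiplicative quotient $\tilde{M}(z)=M^{out}(z)\left(I+z^{-1}\sigma_1\right)^{-1}$, checks that $\tilde{M}\to I$ both as $z\to 0$ and as $z\to\infty$, and invokes Liouville to conclude $\tilde{M}\equiv I$; you instead take the additive difference $D=M_1-M_2$ of two putative solutions, show the $z^{-1}\sigma_1$ principal parts cancel so that $D$ extends to an entire function vanishing at infinity, and apply Liouville to $D$. Both are Liouville-type arguments, but your additive version is available precisely because RHP4 has no jump (for a problem with a genuine jump one must take the ratio so the jumps cancel), and it quietly sidesteps a point the paper's quotient leaves implicit: $\det\left(I+z^{-1}\sigma_1\right)=1-z^{-2}$ vanishes at $z=\pm 1$, so $\left(I+z^{-1}\sigma_1\right)^{-1}$ has poles there and one should really also argue that $\tilde{M}$ is analytic at $z=\pm 1$ before applying Liouville. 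Your explicit verification of the two symmetries and of the behavior $zM^{out}(z)=\sigma_1+\mathcal{O}(z)$ for the candidate also supplies the existence half more completely than the paper, which essentially only derives the formula. The one step worth stating a touch more carefully is the removability at $z=0$: from $zM_i(z)=\sigma_1+\mathcal{O}(z)$ you get $M_i(z)-z^{-1}\sigma_1=\mathcal{O}(1)$ on a punctured neighborhood, and Riemann's removable singularity theorem then gives the analytic extension of $D$ --- which is exactly what you say, so no gap.
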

      \begin{proof}
    	  $M^{out}(z)$ is analytic  in $\mathbb{C}$ except for   $z = 0$.  Making a  transformation
        \begin{equation}
             \tilde{M}(z) =  M^{out}(z) \left( I + z^{-1} \sigma_1\right)^{-1}.
        \end{equation}
        Notice that
        \begin{equation}
            \left( I +  z^{-1} \sigma_1 \right)^{-1} = \left(1-z^{-2}\right)^{-1} \sigma_2  \left( I + z^{-1} \sigma_1\right)^{T} \sigma_2,
        \end{equation}
      then we find that
         \begin{align*}
             \lim_{z \to 0}  \tilde{M}(z) & = \lim_{z \to 0} M^{out}(z) \left( I +z^{-1} \sigma_1\right)^{-1} = I, \\
            \lim_{z\to\infty}  \tilde{M}(z)& = \lim_{z\to\infty}  M^{out}(z) \left( I + z^{-1} \sigma_1\right)^{-1} = I.
       \end{align*}
        $\tilde{M}(z)$ is bounded and analytic in the complex plane, so it is a constant matrix, which is $\tilde{M}(z)=I$.
        Finally, we obtain $M^{out}(z) = I +z^{-1}\sigma_1$.

      	\end{proof}

        \begin{remark}
            The result here is a special case of the $m^{sol}_{j_0}$ in the paper \cite{CJ} with $j_0 = -1$ where  the critical line $\re z = \xi$ is not passing through any neighborhood of the  discrete spectra $z_j\in \mathcal{Z}$.
        \end{remark}

      Outside the region $\mathcal{U}_{\xi}$, the error between $M^{(2)}_{RHP}(z)$ and $M^{out}(z)$ mainly comes from the contribution of neglecting the  jump line and  we will study the error in more detail in subsection  \ref{emod}. Before we consider the error function, we construct local models for the neighborhood of the phase points $\xi_k$ ($k=1,2$).
	
          \subsection{The local model}\label{locmod}

          Denote  local jump  contours
          $$\Sigma^{lo}:=\Sigma^{(2)}\cap  \mathcal{U}_{\xi }    =  \Sigma_{1}\cup \Sigma_{2}, $$
          where
           $\Sigma_{k}:= \cup_{j=1}^4 \Sigma_{kj} \cap \mathcal{U}_{\xi_k}, \, k=1,2$. See Figure \ref{local}.
          Consider the  following  local RH problem:

    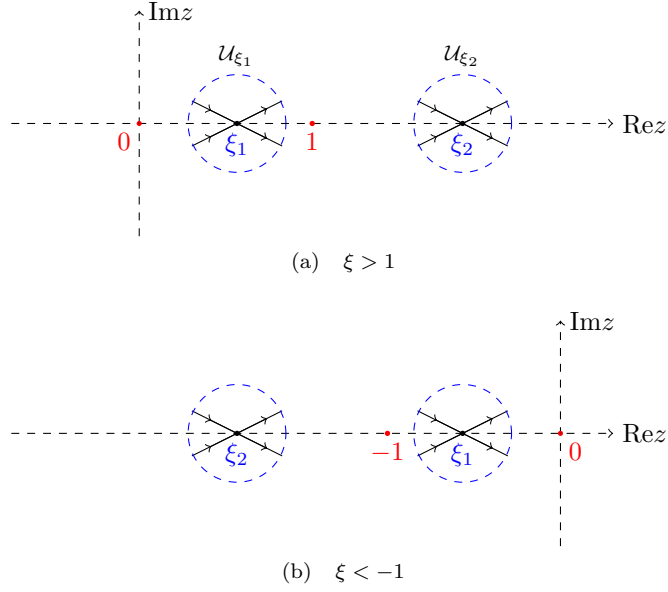
\begin{figure}[h]
	\begin{center}
		\subfigure[\footnotesize\ \ $ \xi>1$]{
			\begin{tikzpicture}
			\draw(-1,0)--(-0.4,0.3);
			\draw[->](-1,0)--(-0.6,0.2);
			\draw(-1,0)--(-1.6,0.3);
			\draw[-<](-1,0)--(-1.4,-0.2);
			\draw(-1,0)--(-0.4,-0.3);
			\draw[-<](-1,0)--(-1.4,0.2);
			\draw(-1,0)--(-1.6,-0.3);
			\draw[->](-1,0)--(-0.6,-0.2);
			\draw[dashed,blue](-1,0) circle (0.65);
			\node at    ( 2,0.9) {\footnotesize $ \mathcal{U}_{\xi_2}$};
			\node at    (-1,0.9) {\footnotesize $ \mathcal{U}_{\xi_1}$};
			\draw[->,dashed](-4,0)--(4,0)node[right]{ \textcolor{black}{Re$z$}};
			\draw[->,dashed](-2.3,-1.5)--(-2.3,1.5)node[right]{\textcolor{black}{Im$z$}};
			\draw(2,0)--(1.4,0.3);
			\draw[-<](2,0)--(1.6,0.2);
			\draw(2,0)--(1.4,-0.3);
			\draw[->](2,0)--(2.4,-0.2);
			\draw(2,0)--(2.6,0.3);
			\draw[->](2,0)--(2.4,0.2);
			\draw(2,0)--(2.6,-0.3);
			\draw[-<](2,0)--(1.6,-0.2);
			\draw[dashed,blue](2,0) circle (0.65);
			\coordinate (I) at (0,0);
			\fill[red] (I) circle (1pt) node[below] {$1$};
			\coordinate (b) at (-1,0);
			\fill (b) circle (1pt) node[blue,below] {$\xi_1$};
			\coordinate (f) at (2,0);
			\fill (f) circle (1pt) node[blue,below] {$\xi_2$};
			\node[shape=circle,fill=red,scale=0.11] at (-2.3,0) {0};
			\node[below,red] at (-2.5,0) {$0$};
			\end{tikzpicture}
			\label{si1}}\\
		\subfigure[\footnotesize \ \ $ \xi<-1$]{
			\begin{tikzpicture}
			\draw(-1,0)--(-0.4,0.3);
			\draw[->](-1,0)--(-0.6,0.2);
			\draw(-1,0)--(-1.6,0.3);
			\draw[-<](-1,0)--(-1.4,-0.2);
			\draw(-1,0)--(-0.4,-0.3);
			\draw[-<](-1,0)--(-1.4,0.2);
			\draw(-1,0)--(-1.6,-0.3);
			\draw[->](-1,0)--(-.6,-0.2);
			
			\draw[->,dashed](-4,0)--(4,0)node[right]{ \textcolor{black}{Re$z$}};
			\draw[->,dashed,black](3.3,-1.5)--(3.3,1.5)node[right]{\textcolor{black}{Im$z$}};
			
			\draw(2,0)--(1.4,0.3);
			\draw[-<](2,0)--(1.6,0.2);
			\draw(2,0)--(2.6,-0.3);
			\draw[->](2,0)--(2.4,-0.2);
			\draw(2,0)--(2.6,0.3);
			\draw[->](2,0)--(2.4,0.2);
			\draw(2,0)--(1.4,-0.3);
			\draw[-<](2,0)--(1.6,-0.2);
			\coordinate (I) at (1,0);
			\fill[red] (I) circle (1pt) node[below] {$-1$};
			\coordinate (b) at (-1,0);
			\fill (b) circle (1pt) node[blue,below] {$\xi_2$};
			\coordinate (f) at (2,0);
			\fill (f) circle (1pt) node[blue,below] {$\xi_1$};
			\node[shape=circle,fill=red,scale=0.11] at (3.3,0) {0};
			\node[below,red] at (3.5,0) {$0$};
			\draw[dashed,blue](2,0) circle (0.65);
			\draw[dashed,blue](-1,0) circle (0.65);
			\end{tikzpicture}
			\label{si2}}
		\caption{ \footnotesize Jump contours $\Sigma^{lo}$ for  $ M^{lo}(z)$ corresponding to the   cases    $ \xi>1$  and   $ \xi<-1$ respectively.}
		\label{local}
	\end{center}
\end{figure}

            \noindent\textbf{RHP5}.  Find a matrix-valued function  $M^{lo}(z)=M^{lo}(z;x,t)$ such that
              \begin{itemize}
          	\item Analyticity: $M^{lo}(z)$ is analytic in $\mathbb{C}\setminus \Sigma^{lo}$.
            \item Symmetry: $M^{lo}(z)=\sigma_1 \overline{M^{lo}(\bar{z})} \sigma_1 $.
          	\item Jump condition: $M^{lo}_+(z)=M^{lo}_-(z)V^{(2)}(z), \quad z\in \Sigma^{lo}.$
          	\item Asymptotic behavior:  $	M^{lo}(z)=I+\mathcal{O}(z^{-1}),	\quad  z \to  \infty.$
          \end{itemize}
          This  local  RH problem, which consists of two  local models on $\Sigma_1$ and $\Sigma_2$,   has the jump condition and no poles.
First, we show as $t \to \infty$, the interaction between $\Sigma_1$ and $\Sigma_2$  reduces to  0 to higher order and the contribution to the solution of $M^{lo}(z)$ is simply the sum of the separate contributions from $\Sigma_1$  and $\Sigma_2$.

     We consider the trivial decomposition of the jump matrix
           \begin{align*}
           &V^{(2)}(z)=b_-^{-1} b_+, \ \  \; b_-=I,  \ \ \; b_+=V^{(2)}(z),\\
          	&w_-=0,\ \   w_+= V^{(2)}(z)-I,\ \  w=V^{(2)}(z)-I,\\
           &w=w_1+w_2, \; w_1=0, z\in \Sigma_2,\;w_2=0, z\in \Sigma_1,
          \end{align*}
          then
          \begin{align*}
           &C_wf=C_-(fw_+)+C_+(fw_-)=C_-(f(V^{(2)}-I)),
          \end{align*}
          where $C_w=C_{w_1}+C_{w_2}$ and $C_\pm$ is the Cauchy operator on $\Sigma^{(2)}$ defined by
       \begin{equation}
          	C_\pm f(z)=\lim_{z'\to z \in \Sigma^{(2)}} \frac{1}{2\pi i} \int_{\Sigma^{(2)}} \frac{f(s)}{s-z'} \, \mathrm{d}s.
          \end{equation}
          Then we have the following lemma.
          \begin{proposition}\label{sum1}
          	\begin{align}
          			&||C_{w_1} C_{w_2}||_{L^2(\Sigma^{lo})}=||C_{w_2} C_{w_1}||_{L^2(\Sigma^{lo})} \lesssim t^{-1},\label{cw1cw21}\\
          			&||C_{w_1}C_{w_2}||_{L^\infty(\Sigma^{lo}) \to L^2(\Sigma^{lo}) }, 	||C_{w_2}C_{w_1}||_{L^\infty(\Sigma^{lo}) \to L^2(\Sigma^{lo}) } \lesssim t^{-1}\label{cw1cw22}.
          	\end{align}
          \end{proposition}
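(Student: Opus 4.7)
The proposition says that although each of $C_{w_1}$ and $C_{w_2}$ is only an $O(1)$ operator on $L^2$, their composition picks up a factor $t^{-1}$. The mechanism is twofold: the supports $\Sigma_1$ and $\Sigma_2$ are disjoint pieces of the jump contour sitting at the two well-separated stationary points, and each $w_k$ carries an oscillatory factor that is Gaussian-small near its own $\xi_k$. Because $\rho$ was chosen so that $\mathcal{U}_{\xi_1}\cap\mathcal{U}_{\xi_2}=\emptyset$ and the opening angle $\phi(\xi)$ is small, there is a strictly positive separation $d:=\mathrm{dist}(\Sigma_1,\Sigma_2)>0$. Hence for $z$ on one contour and $s$ on the other, the Cauchy kernel $(s-z)^{-1}$ is smooth and uniformly bounded by $1/d$, and no boundary value is required when evaluating $C_-(w_2 f)$ at points of $\Sigma_1$ or $C_-(w_1 f)$ at points of $\Sigma_2$; the corresponding integrals are absolutely convergent.

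The quantitative input is a family of oscillatory $L^p$-estimates on the weights. On each lens ray $\Sigma_{kj}$ parametrized as $s=\xi_k+u e^{i\phi}$, $u\in(0,\rho)$, the sign structure of $\re(2i\theta)$ from Proposition~\ref{prop2}, together with $\theta'(\xi_k)=0$ and $\theta''(\xi_k)\neq 0$, gives the Gaussian bound $|e^{\pm 2it\theta(s)}|\lesssim e^{-ctu^2}$. Combined with the uniform boundedness of the extensions $f_{kj}$ from Proposition~\ref{prop1}, this yields
\begin{equation*}
\|w_k\|_{L^p(\Sigma_k)}\lesssim t^{-1/(2p)},\qquad 1\le p<\infty.
\end{equation*}

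With these two ingredients, I would write $C_{w_1}C_{w_2}f=C_-(w_1\,G)$ with $G(s_1):=\tfrac{1}{2\pi i}\int_{\Sigma_2}\frac{w_2(s)f(s)}{s-s_1}\,ds$. For $s_1\in\Sigma_1$ the bound $|s-s_1|\ge d$ and Cauchy--Schwarz in the inner integral give the pointwise estimate $|G(s_1)|\le (2\pi d)^{-1}\|w_2\|_{L^2(\Sigma_2)}\|f\|_{L^2}$. Inserting this uniform bound on $\Sigma_1$ into $\|w_1\,G\|_{L^2(\Sigma_1)}$ and applying the $L^2\to L^2$ boundedness of $C_-$ on $\Sigma^{(2)}$, the multiplicative combination of the $L^p$-decay estimates produces~\eqref{cw1cw21}. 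The symmetric bound for $C_{w_2}C_{w_1}$ follows by swapping the roles of $\Sigma_1$ and $\Sigma_2$. For the $L^\infty\to L^2$ estimate~\eqref{cw1cw22} I would use instead the complementary pointwise bound $|G(s_1)|\le d^{-1}\|w_2\|_{L^1}\|f\|_{L^\infty}\lesssim t^{-1/2}\|f\|_{L^\infty}$, paired with the $L^2$-bound on $w_1$.

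\textbf{Main obstacle.} The principal technical step is extracting the sharp oscillatory decay $\|w_k\|_{L^p(\Sigma_k)}\lesssim t^{-1/(2p)}$ near the stationary points, which hinges on the quadratic lower bound $\re(\pm 2it\theta(s))\gtrsim ct|s-\xi_k|^2$ on each lens ray. This is the place where the non-degeneracy $\theta''(\xi_k)\neq 0$ enters crucially, and where one must be careful that Proposition~\ref{prop2}, which is written as a linear-in-$v$ lower bound, is upgraded on the rays $\Sigma_{kj}$ to the true quadratic behavior dictated by the Taylor expansion of $\theta$. Once this Gaussian decay is in hand, the composition bound reduces to a routine combination of Cauchy--Schwarz with the kernel separation $d$.
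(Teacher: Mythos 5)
Your overall strategy is the one the paper uses: since $w_-=0$, the composition $C_{w_1}C_{w_2}f$ reduces to $C_-\bigl(w_1\,G\bigr)$ with $G(\kappa)=\frac{1}{2\pi i}\int_{\Sigma_2}\frac{f(\eta)w_2(\eta)}{\eta-\kappa}\,\mathrm{d}\eta$, the separation $d=\mathrm{dist}(\Sigma_1,\Sigma_2)>0$ makes the kernel bounded by $d^{-1}$, and one finishes with Cauchy--Schwarz, the $L^2$-boundedness of $C_-$, and $L^p$ decay of the weights. Up to that point your write-up matches the paper's proof step for step.

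The gap is in the last line. Your own (correct) estimate $\|w_k\|_{L^p(\Sigma_k)}\lesssim t^{-1/(2p)}$ --- which follows from the Gaussian profile $|w_k|\lesssim e^{-ct|z-\xi_k|^2}$ forced by $\theta'(\xi_k)=0$, $\theta''(\xi_k)\neq0$ --- gives $\|w_k\|_{L^2}\lesssim t^{-1/4}$ and $\|w_k\|_{L^1}\lesssim t^{-1/2}$. Feeding these into your chain yields $\|C_{w_1}C_{w_2}\|_{L^2\to L^2}\lesssim d^{-1}\|w_1\|_{L^2}\|w_2\|_{L^2}\lesssim t^{-1/2}$ and $\|C_{w_1}C_{w_2}\|_{L^\infty\to L^2}\lesssim d^{-1}\|w_1\|_{L^2}\|w_2\|_{L^1}\lesssim t^{-3/4}$, not the asserted $t^{-1}$; the sentence ``the multiplicative combination of the $L^p$-decay estimates produces \eqref{cw1cw21}'' is exactly where the argument breaks. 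To reach $t^{-1}$ by this route one would need $\|w_k\|_{L^2}\lesssim t^{-1/2}$, which is what the paper asserts without justification --- but that bound is incompatible with the quadratic vanishing of $\re(2i\theta)$ on the lens rays at a genuine stationary point: since $|w_k|\sim|r(\xi_k)|\neq 0$ at $z=\xi_k$ and the decay scale is $|z-\xi_k|\sim t^{-1/2}$, one in fact has $\|w_k\|_{L^2}\gtrsim t^{-1/4}$. So the issue you flag as the ``main obstacle'' (upgrading the linear-in-$v$ bound of Proposition \ref{prop2} to the true quadratic one) is handled correctly by you, but it then caps the attainable exponents at the classical Deift--Zhou rates $t^{-1/2}$ and $t^{-3/4}$. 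Those weaker rates still suffice for the purpose the proposition serves downstream (decoupling the two local models up to errors that are $o(t^{-1/2})$ in Proposition \ref{sum} and Theorem \ref{lta}), but they do not establish \eqref{cw1cw21}--\eqref{cw1cw22} as literally stated, and neither does the paper's own argument.
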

          \begin{proof}
          	From the definition of the operators, we have for any $f \in L^{\infty}\cap L^{2} \left(\Sigma^{lo}\right)$,
          	\begin{equation}
          		C_{w_1}C_{w_2} f =C_+ \left( C_- (f w_{2+}) w_{1-}\right) + C_- \left( C_+ (f w_{2-}) w_{1+}\right).
          	\end{equation}
          	Then,
          	\begin{align}
          		||C_- \left(C_+ (f w_{2-}) w_{1+}\right)(\cdot)||_{L^2\left(\Sigma^{lo}\right)}&=\Big|\Big|\int_{\Sigma_1} \left( \int_{\Sigma_2} \frac{f(\eta)w_{2-}(\eta)}{(\eta-\kappa)_+} \, \mathrm{d}\eta  \right)\frac{ w_{1+}(\kappa)}{(\kappa-\cdot)_-} \, \mathrm{d}\kappa \Big|\Big|_{L^2\left(\Sigma^{lo}\right)} \nonumber\\
          		& \lesssim ||w_{1+}||_{L^2(\Sigma_1)} \sup_{\kappa \in \Sigma_1} \Big|\int_{\Sigma_2} \frac{f(\eta)w_{2-}(\eta)}{\eta-\kappa}\, \mathrm{d}\eta \Big|.\nonumber
          	\end{align}
            Furthermore, we have $||w||_{L^2\left(\Sigma^{lo}\right)} \lesssim t^{-1/2}$. Combining the above two equations we can obtain the estimations (\ref{cw1cw21})-(\ref{cw1cw22})  after direct calculations.
          \end{proof}

 Following the idea and  step in  \cite{sdmRHp},  we can derive the proposition:
         \begin{proposition}\label{sum}
            \begin{equation}
            \int_{\Sigma^{lo}} \frac{ (I-C_w)^{-1}Iw}{s-z}\, \mathrm{d}s=\sum_{j=1}^{2} \int_{\Sigma_{j}} \frac{ (I-C_{w_j})^{-1}Iw_j}{s-z}\, \mathrm{d}s + \mathcal{O}(t^{-3/2}).
            \end{equation}
        \end{proposition}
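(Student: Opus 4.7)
The strategy is to write the global resolvent action $(I - C_w)^{-1}I$ as the sum of the two local resolvent actions plus a remainder whose smallness is controlled by Proposition \ref{sum1}. Set $\mu = (I - C_w)^{-1}I$ and $\mu_j = (I - C_{w_j})^{-1}I$, and introduce the remainder
\begin{equation*}
\nu := \mu - \mu_1 - \mu_2 + I.
\end{equation*}
Using the resolvent identities $\mu = I + C_w\mu$ and $\mu_j = I + C_{w_j}\mu_j$ together with $C_w = C_{w_1}+C_{w_2}$, a short algebraic manipulation gives
\begin{equation*}
(I - C_w)\nu = C_{w_1}C_{w_2}\mu_2 + C_{w_2}C_{w_1}\mu_1,
\end{equation*}
which will be the source of all small quantities below.

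Next, I would estimate $\nu$ in $L^2(\Sigma^{lo})$. Splitting $\mu_j = I + (\mu_j - I)$ and noting that $\|\mu_j - I\|_{L^2(\Sigma_j)} \lesssim \|w_j\|_{L^2(\Sigma_j)} \lesssim t^{-1/2}$, the bounds \eqref{cw1cw21}--\eqref{cw1cw22} from Proposition \ref{sum1} yield
\begin{equation*}
\|C_{w_1}C_{w_2}\mu_2\|_{L^2} \le \|C_{w_1}C_{w_2}\|_{L^2 \to L^2}\|\mu_2 - I\|_{L^2} + \|C_{w_1}C_{w_2}\|_{L^\infty \to L^2} \lesssim t^{-1},
\end{equation*}
and analogously for the other summand. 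Since $\|C_w\|_{L^2 \to L^2}$ is small for large $t$, $(I-C_w)^{-1}$ is uniformly bounded on $L^2$, producing $\|\nu\|_{L^2(\Sigma^{lo})} \lesssim t^{-1}$.

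Then I would split the target integral using the disjoint supports of $w_1,w_2$:
\begin{equation*}
\int_{\Sigma^{lo}}\frac{\mu w}{s-z}\,\mathrm{d}s = \sum_{j=1}^{2}\int_{\Sigma_j}\frac{\mu\, w_j}{s-z}\,\mathrm{d}s,
\end{equation*}
and on each $\Sigma_j$ decompose $\mu = \mu_j + (\mu_{\hat\jmath} - I) + \nu$, where $\hat\jmath$ denotes the complementary index. The contribution of $\nu$ is handled by Cauchy--Schwarz, yielding
\begin{equation*}
\Bigl|\int_{\Sigma^{lo}}\frac{\nu\, w}{s-z}\,\mathrm{d}s\Bigr| \lesssim \|\nu\|_{L^2}\, \|w\|_{L^2} = \mathcal{O}(t^{-1}) \cdot \mathcal{O}(t^{-1/2}) = \mathcal{O}(t^{-3/2}).
\end{equation*}
For the cross-term $\int_{\Sigma_j}\frac{(\mu_{\hat\jmath}-I)\, w_j}{s-z}\,\mathrm{d}s$, I would use that $\Sigma_j$ and $\Sigma_{\hat\jmath}$ are well-separated, so $(\mu_{\hat\jmath}-I)(s) = \tfrac{1}{2\pi i}\int_{\Sigma_{\hat\jmath}}\frac{\mu_{\hat\jmath}(\eta)w_{\hat\jmath}(\eta)}{\eta-s}\,\mathrm{d}\eta$ is a regular (non-singular) Cauchy integral in $s$; Fubini converts the cross-term into a double integral in which the inner stationary-phase integral over $\Sigma_{\hat\jmath}$ and the outer stationary-phase integral over $\Sigma_j$ each contribute a factor $t^{-1/2}$, after which a further iteration of the resolvent identity $(\mu_{\hat\jmath}-I) = C_{w_{\hat\jmath}}I + C_{w_{\hat\jmath}}(\mu_{\hat\jmath}-I)$ combined with Proposition \ref{sum1} upgrades the estimate to $\mathcal{O}(t^{-3/2})$ uniformly in $z$.

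\textbf{Main obstacle.} The delicate step is the cross-term analysis in the previous paragraph. A naive Cauchy--Schwarz bound on $\int_{\Sigma_j}\frac{(\mu_{\hat\jmath}-I)w_j}{s-z}\,\mathrm{d}s$ only produces $\mathcal{O}(t^{-1})$, since both $\|(\mu_{\hat\jmath}-I)\|_{L^\infty(\Sigma_j)}$ and $\|w_j\|_{L^2(\Sigma_j)}$ are of order $t^{-1/2}$. Obtaining the sharp $t^{-3/2}$ bound requires simultaneously exploiting (i) the stationary-phase decay of the inner Cauchy integral against $\Sigma_{\hat\jmath}$, (ii) the further smallness $\|C_{w_{\hat\jmath}}(\mu_{\hat\jmath}-I)\|_{L^\infty(\Sigma_j)} \lesssim t^{-1}$ coming from iterating the resolvent once more and invoking Proposition \ref{sum1}, and (iii) the fact that the product of the two operators $C_{w_1}$ and $C_{w_2}$ acts between Cauchy integrals supported on disjoint contours, so the quantitative norm bounds in \eqref{cw1cw21}--\eqref{cw1cw22} are the governing estimates. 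Once this is carried through along the lines of \cite{sdmRHp}, collecting all contributions produces the claimed identity.
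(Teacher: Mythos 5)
Your strategy is the same one the paper intends: Proposition \ref{sum} is justified in the text only by the remark ``following the idea and step in \cite{sdmRHp}'', and what you reconstruct is precisely Deift--Zhou's separation lemma. The algebraic core of your proposal is correct: with $\nu=\mu-\mu_1-\mu_2+I$ one does get $(I-C_w)\nu=C_{w_1}C_{w_2}\mu_2+C_{w_2}C_{w_1}\mu_1$, and combining \eqref{cw1cw21}--\eqref{cw1cw22} with $\|\mu_j-I\|_{L^2}\lesssim\|w_j\|_{L^2}$ and the uniform boundedness of $(I-C_w)^{-1}$ gives $\|\nu\|_{L^2(\Sigma^{lo})}\lesssim t^{-1}$; the $\nu$-contribution to the integral is then handled exactly as you say (granting the bound $\|w\|_{L^2(\Sigma^{lo})}\lesssim t^{-1/2}$ asserted in the proof of Proposition \ref{sum1}).

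The genuine gap is the cross term $\int_{\Sigma_j}(s-z)^{-1}(\mu_{\hat\jmath}-I)\,w_j\,\mathrm{d}s$, which you correctly single out as the main obstacle but do not close, and the upgrade you sketch cannot work as described. For $s\in\Sigma_j$ the quantity $(\mu_{\hat\jmath}-I)(s)=\frac{1}{2\pi i}\int_{\Sigma_{\hat\jmath}}\frac{\mu_{\hat\jmath}(\eta)w_{\hat\jmath}(\eta)}{\eta-s}\,\mathrm{d}\eta$ is, to leading order, $(\xi_{\hat\jmath}-s)^{-1}$ times the local parabolic-cylinder coefficient, a genuinely nonzero quantity of exact order $t^{-1/2}$; pairing it with $w_j/(s-z)$ over $\Sigma_j$ produces a second factor of exact order $t^{-1/2}$, so the cross term is generically of size $t^{-1}$ and not smaller. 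Iterating the resolvent identity once more, or re-invoking Proposition \ref{sum1}, only improves the \emph{subleading} part of $\mu_{\hat\jmath}-I$; it does not remove its leading $O(t^{-1/2})$ part, so no additional half power of $t$ appears. Hence your argument, carried out honestly, proves the decomposition with remainder $O(t^{-1})$ rather than $O(t^{-3/2})$. To reach $t^{-3/2}$ one would need extra structure beyond norm estimates --- for instance, that the leading local coefficients $M^{pc,k}_1$ in \eqref{pc1} are off-diagonal, so the leading $O(t^{-1})$ cross contribution is a diagonal matrix and drops out of the $(2,1)$-entry used in the reconstruction formula --- but that is a statement about the final asymptotics, not about the matrix-valued identity as stated, and neither your proposal nor the paper supplies it. Note that an $O(t^{-1})$ remainder here is in any case sufficient for the $O(t^{-3/4})$ error claimed in Theorem \ref{lta}.
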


        We consider the Taylor expansion in the neighborhood of $\xi_k, \   k=1,2$,
        \begin{equation}
            \theta(z)=\theta(\xi_k)+\frac{\theta''(\xi_k)}{2} (z-\xi_k)^2 + G_k(z;\xi_k),
        \end{equation}
        where $G_k(z;\xi_k)=\mathcal{O}\left((z-\xi_k)^3\right)$.
 Then we obtain the following proposition.
          \begin{proposition}\label{sum2}
           Let $\xi=\mathcal{O}(1)$, define  operators $N_k$ ($k=1,2$)
        \begin{equation}
            N_k: g(z) \to (N_kg)(z)=g\left(\frac{s}{\sqrt{ 2t \epsilon_k\theta''(\xi_k)\epsilon_k}}+\xi_k\right),
        \end{equation}
        where $s=u\xi_k e^{\pm i\varphi_k}, \ |u|<  \rho, \ k=1,2 $.
    Then  we have
              \begin{equation}\label{G}
                      \Big|\exp \left\{-it G_k\left(\frac{s}{\sqrt{ 2t\epsilon_k\theta''(\xi_k)  }}+\xi_k;\xi_k \right)\right\}\Big|\to 1, \quad \text{as}\; t\to \infty.
              \end{equation}

          \end{proposition}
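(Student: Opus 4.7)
The plan is to use the Taylor expansion hypothesis $G_k(z;\xi_k) = \mathcal{O}((z-\xi_k)^3)$ together with the fact that the rescaling in the operator $N_k$ shrinks the effective neighborhood of $\xi_k$ at rate $t^{-1/2}$, so that the cubic remainder $G_k$ contributes only $\mathcal{O}(t^{-3/2})$, which gives $tG_k = \mathcal{O}(t^{-1/2}) \to 0$. The estimate on $|e^{-itG_k}|$ then reduces to an elementary application of $|e^w| = e^{\operatorname{Re}(w)}$.

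First I would observe that because $\xi = \mathcal{O}(1)$, both phase points $\xi_k$ are bounded away from $0, \pm 1$ by Remark \ref{whyweneedM}, so $\theta(z)$ is analytic in a fixed neighborhood $\mathcal{U}_{\xi_k}$ of $\xi_k$. By analyticity and the defining property $G_k(z;\xi_k) = \theta(z) - \theta(\xi_k) - \tfrac{\theta''(\xi_k)}{2}(z-\xi_k)^2$, there is a constant $C_k > 0$ (independent of $t$) such that
\begin{equation}
|G_k(z;\xi_k)| \le C_k |z-\xi_k|^3, \qquad z \in \mathcal{U}_{\xi_k}.\nonumber
\end{equation}

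Second, I would substitute the rescaling $z = \xi_k + s/\sqrt{2t\,\epsilon_k \theta''(\xi_k)}$ with $s = u\xi_k e^{\pm i\varphi_k}$, $|u| < \rho$. Since $|\xi_k|$ is bounded and $|u|<\rho$, the rescaled parameter $s$ lies in a bounded set independent of $t$. Consequently
\begin{equation}
\Bigl|z-\xi_k\Bigr| \;=\; \frac{|s|}{\sqrt{2t|\theta''(\xi_k)|}} \;=\; \mathcal{O}(t^{-1/2}),\nonumber
\end{equation}
so for $t$ large enough, $z$ stays inside $\mathcal{U}_{\xi_k}$ where the cubic estimate above applies, giving
\begin{equation}
\Bigl|G_k\bigl(\tfrac{s}{\sqrt{2t\epsilon_k\theta''(\xi_k)}}+\xi_k;\xi_k\bigr)\Bigr| \;\le\; C_k \frac{|s|^3}{(2t|\theta''(\xi_k)|)^{3/2}} \;=\; \mathcal{O}(t^{-3/2}).\nonumber
\end{equation}

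Finally, using $|e^{-itG_k}| = \exp\bigl(t\,\operatorname{Im} G_k\bigr)$, we see
\begin{equation}
\bigl|\,t\,\operatorname{Im} G_k\bigr| \;\le\; t \cdot C_k \frac{|s|^3}{(2t|\theta''(\xi_k)|)^{3/2}} \;=\; \mathcal{O}(t^{-1/2}) \;\longrightarrow\; 0,\nonumber
\end{equation}
which yields $|e^{-itG_k}| \to e^0 = 1$ as $t \to \infty$, proving \eqref{G}.

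I do not expect any serious obstacle here: the argument is essentially a Taylor-remainder bound combined with the scaling inherent in the parabolic-cylinder model. The only thing to double-check is that $\xi_k$ is bounded uniformly as $\xi$ ranges over $1 < |\xi| < \infty$ with $\xi = \mathcal{O}(1)$, which is guaranteed by the formulas \eqref{xi1}--\eqref{xi2} and the assumption $\xi = \mathcal{O}(1)$. Uniformity of the constant $C_k$ and of $|\theta''(\xi_k)|$ on compact subsets of $\{|\xi|>1\}$ then ensures the convergence is locally uniform in $\xi$ as well.
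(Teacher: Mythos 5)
Your proposal is correct and follows essentially the same route as the paper: the paper writes out $N_1\theta(z)=\theta(\xi_1)+\tfrac{s^2}{4t}+G_1(\cdot)$ under the rescaling and then asserts \eqref{G} follows for $s=u\xi_k e^{\pm i\varphi_k}$, $|u|<\rho$, by ``direct calculations'', which are exactly the Taylor-remainder bound $|G_k|\le C_k|z-\xi_k|^3=\mathcal{O}(t^{-3/2})$ and the identity $|e^{-itG_k}|=e^{t\operatorname{Im}G_k}$ that you supply. Your observations that $\xi_k$ stays bounded away from $0,\pm1$ (so $\theta$ is analytic with uniform constants) and that $|s|$ is bounded under the stated restriction $|u|<\rho$ are precisely the points needed to make the paper's assertion rigorous.
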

         \begin{proof}
            We only give the proof for $\xi_1$ and the proof for $\xi_2$ can be given similarly.
              The operator $N_1$ acting on $T^{-1}(z)e^{-it\theta(z)}$  in the neighborhood of $\xi_1$ gives
             \begin{equation}
                 N_1\left(T^{-1}e^{-it\theta(z)} \right) = T^{-1} \left( \frac{s}{\sqrt{2t\theta''(\xi_1)}}+\xi_1 \right)  \exp\left( -it \theta \left(\frac{s}{\sqrt{2t\theta''(\xi_1)}}+\xi_1 \right)\right).\nonumber
             \end{equation}
             Direct calculations show that
             \begin{align}
                 &N_1\theta(z)=\theta(\xi_1)+\frac{s^2}{4t}+G_1\left(\frac{s}{\sqrt{2t\theta''(\xi_1)}}+\xi_1 ;\xi_1\right),\nonumber\\
                 &N_1T^{-1}(z)= \left(2t\theta''(\xi_1) \right)^{-\frac{v(\xi_1)}{2}}e^{w \left(   \frac{s}{\sqrt{2t\theta''(\xi_1)}}+\xi_1  \right)} s^{iv(\xi_1)}  \left(\frac{s}{\sqrt{2t\theta''(\xi_1)}}+\xi_1 -\xi_2\right)^{-iv(\xi_2)}, \nonumber
             \end{align}
             where
             \begin{align}
                 &w(z)=-\frac{1}{2\pi i} \int_{I(\xi)} \ln|s-z|\, \mathrm{d}\ln(1-|r|^2).
             \end{align}
             For $s=u\xi_1e^{\pm i\varphi_1}$, we can obtain the result (\ref{G})  using  direct calculations.
         \end{proof}

          Second, following the lemma  \ref{sum}, we introduce two local models $M^{lo,k}$  on the  jump contour  $\Sigma_{k}$  $(k=1, 2)$
           whose solutions can be given explicitly in terms of parabolic cylinder functions on the two contours respectively.
          We consider the following RH problems:

           \noindent\textbf{RHP6}.  Find a matrix-valued function  $M^{lo,k}(z)=M^{lo,k}(z;\xi_k,x,t)$ such that
           \begin{itemize}
          	\item Analyticity: $M^{lo,k}(z)$ is analytic in $\mathbb{C}\backslash \Sigma_{k}$.
             \item Symmetry: $M^{lo,k}(z)=\sigma_1 \overline{M^{lo,k}(\bar{z})} \sigma_1 $.
          	\item Jump condition:
          	\begin{align*}
          		M^{lo,k}_+(z)=M^{lo,k}_-(z)V^{lo,k}(z), \quad z\in \Sigma_k,
          	\end{align*}
          where
       \begin{align}
      	V^{lo,k}(z)= \begin{cases}
      		\left(\begin{array}{cc}
      			1 & -f_{kj}e^{-2it\theta}  \\
      			0 &1
      		\end{array} \right),\, z\in \Sigma_{kj}, j=1,3,\\
      		\left(\begin{array}{cc}
      			1 & 0 \\
      			 f_{kj}e^{2it\theta}&1
      		\end{array} \right),\, z\in \Sigma_{kj},j=2,4.
      	\end{cases}
      \end{align}
\item Asymptotic behavior:  	$M^{lo,k}(z)=I+\mathcal{O}(z^{-1}),	\quad  z \to  \infty.$

          \end{itemize}

          Here we take the construction of the model $M^{lo,1} (z)$ as an example, and the other cases can be considered similarly.
          In order to motivate this model, we define the rescaled local variable
          \begin{align}
          s =s(z;\xi_1)=\sqrt{2t\theta''(\xi_1)} (z-\xi_1),\label{scle}
          \end{align}
          and we choose the branches of the logarithm with $-\pi < \arg s <\pi$.
           We set
           \begin{align}
           	&r_{\xi_1}=-r(\xi_1)T_1(\xi_1)^2 \exp \big( 2it \theta(\xi_1)  +  iv(\xi_1)  \log\left(2t\theta''(\xi_1) \right) \big),\label{rdd}\\
            &\tilde{\rho}=\tilde{\rho}(\xi_1)=\sqrt{2t\theta''(\xi_1) }\rho,\quad \tilde{\mathcal{U}}_{\xi_1}= \{s\in \mathbb{C}: |s|=\tilde{\rho}\},
        \end{align}
          where $|r_{\xi_1}|=|r(\xi_1)|$.
         Making scaling transformation
         $$ M^{pc,1} (s;r_{\xi_1}) =  N_1  M^{lo,1}(z), $$
where $s$ and $	r_{\xi_1}$  satisfy the relations  (\ref{scle}) and (\ref{rdd}).
            After changing the variable we obtain the parabolic cylinder model problems.

             \noindent\textbf{RHP7}.  Find a matrix-valued function  $M^{pc,1}(s;r_{\xi_1})=M^{pc,1}(s;r_{\xi_1}, x,t)$ such that
           \begin{itemize}
           	\item Analyticity: $M^{pc,1}(s;r_{\xi_1})$ is analytic in $\mathbb{C}\backslash \Sigma^{pc,1}$ where $\Sigma^{pc,1}= \cup_{j=1}^4  \Sigma^{pc,1}_j$ with
            \begin{equation*}
                \Sigma^{pc,1}_1= \mathbb{R}^+ e^{(\pi-\varphi)i} \cap \tilde{\mathcal{U}}_{\xi_1},\;\Sigma^{pc,1}_2= \mathbb{R}^+ e^{\varphi i} \cap \tilde{\mathcal{U}}_{\xi_1},\;\Sigma^{pc,1}_3=\overline{\Sigma}^{pc,1}_2, \;\Sigma^{pc,1}_4=\overline{\Sigma}^{pc,1}_1.
            \end{equation*}
           	\item Jump condition:
           	\begin{align*}
           		M^{pc,1}_+(s;r_{\xi_1})=M^{pc,1}_-(s;r_{\xi_1})V^{pc,1}(s;r_{\xi_1}), \quad s\in \Sigma^{pc,1},
           	\end{align*}
           	where
               \begin{align}
           		V^{pc,1}(s;r_{\xi_1})= \begin{cases}
                    \left(\begin{array}{cc}
                        1 & 		\frac{-\bar{r}_{\xi_1}}{1-|r_{\xi_1}|^2}s^{2iv(\xi_1)}e^{- is^2/2} \\
                        0&1
                    \end{array} \right),\, s\in \Sigma^{pc,1}_1,\\
           			\left(\begin{array}{cc}
           				1 & 0 \\
           				r_{\xi_1} s^{- 2iv(\xi_1) }e^{is^2/2}&1
           			\end{array} \right),\, s\in \Sigma^{pc,1}_2,\\
           			\left(\begin{array}{cc}
           				1 & -\overline{r}_{\xi_1}s^{2iv(\xi_1)}e^{-is^2/2}  \\
           				0 &1
           			\end{array} \right),\, s\in \Sigma^{pc,1}_3,\\
           		    \left(\begin{array}{cc}
           			1 & 0  \\
           			\frac{r_{\xi_1}}{1-|r_{\xi_1}|^2}s^{-2iv(\xi_1) }e^{is^2/2} &1
           		\end{array} \right),\, s\in \Sigma^{pc,1}_4.
           		\end{cases}
           	\end{align}

      	\item Asymptotic behavior:  	
           	\begin{align*}
           		M^{pc,1}(s;r_{\xi_1})=I+\frac{M^{pc,1}_1(x,t)}{s}+
                \mathcal{O}\left( s^{-2}\right),	\quad  s \to  \infty,
           	\end{align*}
             where $M^{pc,1}_1(x,t)$ is the coefficient of the term $s^{-1}$ in $M^{pc,1}(s;r_{\xi_1})$.
           \end{itemize}

        Further, $M^{pc,1}(s;r_{\xi_1})$ can be constructed by  the following transformation:
           \begin{equation}
           	M^{pc,1}(s;r_{\xi_1})= \Psi(s;r_{\xi_1}) P(s;r_{\xi_1})  e^{is^2 \sigma_3 /4}s^{-iv(\xi_1)\sigma_3}, \label{925}
           \end{equation}
          where
          \begin{equation}\nonumber
          	P(s;r_{\xi_1}) =\begin{cases}
          		\left(\begin{array}{cc}
          			1&0\\
          			 - r_{\xi_1} & 1
          		\end{array} \right),\quad \arg s \in (0,\varphi),\\
          		\left(\begin{array}{cc}
          		1& - \bar{r}_{\xi_1}\\
          		0 & 1
          	\end{array} \right),\quad \arg s \in (0,-\varphi),\\
             	\left(\begin{array}{cc}
             	1& \frac{\bar{r}_{\xi_1}}{1-|r_{\xi_1}|^2}\\
             	0 & 1
             \end{array} \right),\quad \arg s \in (\pi - \varphi,\pi),\\
         \left(\begin{array}{cc}
         	1& 0\\
         	\frac{r_{\xi_1}}{1-|r_{\xi_1}|^2} & 1
         \end{array} \right),\quad \arg s \in (-\pi ,-\pi+\varphi),
	\end{cases}
          \end{equation}
  and $\Psi(s,r_{\xi_1})$ satisfies the following RH problem:

           \noindent\textbf{RHP8}.  Find a matrix-valued function  $\Psi(s;r_{\xi_1})$ which has the following properties:
          \begin{itemize}
          	\item Analyticity: $\Psi(s;r_{\xi_1})$ is analytic in $\mathbb{C}\backslash  \mathbb{R}$.
          	\item Jump condition:
           \begin{align*}
              \Psi_{+}(s;r_{\xi_1})=\Psi_{-}(s;r_{\xi_1})V^{\Psi}(0), \quad s\in \mathbb{R},
           \end{align*}
          	where \begin{align}
          		V^{\Psi}(0)=
          			\left(\begin{array}{cc}
          				1-|r_{\xi_1}|^2 &  \bar{r}_{\xi_1} \\
          				-r_{\xi_1} &1
          			\end{array} \right).
          	\end{align}

          	\item Asymptotic behavior:  	
          	\begin{align}
          		\Psi(s;r_{\xi_1})=\left(I+\frac{M^{pc,1}_1}{s}
          		+\mathcal{O}(s^{-2}) \right)  s^{iv(\xi_1)\sigma_3}e^{-is^2\sigma_3/4},	\quad  s \to  \infty. \nonumber
          	\end{align}

          \end{itemize}
  In a similar way \cite{fNLS,LJQ},    this RH problem can be changed into a Weber equation to  obtain its solution  $\Psi(s;r_{\xi_1})=(\Psi_{jl})_{j,l=1}^2$
in terms of   parabolic cylinder functions.
Then using (\ref{925}), we get the asymptotic solution of the RHP7. Further,  we can obtain the  asymptotic solution  of $M^{pc,2}(s_;r_{\xi_2})$ similarly.  The asymptotic results are shown in the following formula
         	\begin{align*}
           		M^{pc,k}(s_;r_{\xi_k})=I+\frac{M^{pc,k}_1}{s}+
           		\mathcal{O}(s^{-2}),	\quad  s \to  \infty,
           	\end{align*}
where
    \begin{align}
        M^{pc,k}_1=
            \left( \begin{array}{cc}
                0 &  -i\epsilon_k \beta_{12}^{ \xi_k}\\
              i \epsilon_k\beta_{21}^{ \xi_k}&  0
            \end{array}\right),\label{pc1}
    \end{align}
with
          \begin{align}
            &\beta_{12}^{ \xi_k} =\frac{(2\pi)^{
                \frac{1}{2}
            } e^{\frac{(2k-1) \pi i }{4}}   e^{\frac{ -\pi \epsilon_k v(\xi_k) }{2}}}{-r_{\xi_k} \Gamma(-i\epsilon_k v(\xi_k))}, \  \; \beta_{12}^{ \xi_k}\beta_{21}^{ \xi_k}=  v(\xi_k),\; \  \epsilon_k = (-1)^{k+1},\nonumber\\
            & \arg \beta_{12}^{ \xi_k}= \frac{(2k-1)\pi}{4}  -\arg r_{\xi_k} + \arg \Gamma(i\epsilon_k v(\xi_k)).\nonumber
          \end{align}
\begin{remark}

From calculations above, we see that this asymptotic result (\ref{pc1})  is independent of the opening fixed  angle $\phi(\xi)$ satisfying $|\phi(\xi)|<\pi/4$.

\end{remark}
\begin{remark} Noting that for the original RHP1,  $M(z)$   admits the circular symmetry
\begin{align}
 M(z)=z^{-1}M(z^{-1})\sigma_1. \label{ere}
\end{align}
However for the local model RHP5, the matrix function $M^{lo}(z)$ no longer admits  such  circular symmetry as  (\ref{ere}).
So we cannot obtain the local model $M^{pc,1}(z)$  at  the phase  point $\xi_1$ through the local model $M^{pc,2}(z)$ at the  symmetric
 phase  point $\xi_2$
by means of circular symmetry. Therefore, we  have to solve   two local  models separately  since
  the local model cannot keep  the circular symmetry   like  the  symmetry with respect to the imaginary axis $i\mathbb{R}$.

\end{remark}

          \subsection{The small-norm RH problem}\label{emod}

           Define  the error function $E(z)$,  which satisfies the following RH problem.
\begin{equation}
      E(z)= \begin{cases}
        M^{(2)}_{RHP}(z) (M^{out})^{-1}(z), \quad z\in \mathbb{C} \backslash  \mathcal{U}_{\xi },\\[3pt]
        M^{(2)}_{RHP}(z) (M^{lo} (z))^{-1}  (M^{out})^{-1}(z), \quad z \in \mathcal{U}_{\xi}.
       \end{cases}
\end{equation}

\begin{figure}
    \begin{center}
     \subfigure[The \ region $ \xi>1$]{
        \begin{tikzpicture}[scale=0.9]

            \draw[dashed,->](-4.2,0)--(6,0)node[black,right]{Re$z$};
            \foreach \x [count=\p] in {1,...,5} {
        \node[shape=circle,fill=black, scale=0.15] (\p) at (\x*30:2) {};
        \draw[line width=0.5] (\x*30:2) circle (0.2);
        \draw[ ->] (\x*30:2)+(0.2,0) arc (0:90:0.2);
        \node[shape=circle,fill=black, scale=0.15] (\p) at (180+\x*30:2) {};
        \draw[line width=0.5] (180+\x*30:2) circle (0.2);
        \draw[ ->] (180+\x*30:2)+(-0.2,0) arc (180:90:0.2);
        };

            \node[below] at (-0.1,0) {\footnotesize $0$};
            \node[below] at (-2.3,0) {\footnotesize $-1$};
            \node[right] at (1.2,1.8) {$z_j$};
            \node[right] at (1.2,-1.8) {$\bar{z}_j$};

            \draw [dashed, gray] (0,0) circle (2);
            \draw [red,line width=0.5] (1.65,0) circle (0.35);
            \node[shape=circle,fill=black, scale=0.2]  at (1.7,0) {};
            \node[below] at (1.73,0) {\tiny $\xi_1$};

            \draw [red,line width=0.5] (3.8,0) circle (0.35);
            \node[shape=circle,fill=black, scale=0.2]  at (3.8,0) {};
            \node[below] at (3.8,0) {\tiny $\xi_2$};

                     \draw  (-4, -1.5)--(0.8,0.35);
                     \draw  (2,-0.14 )--(2.7,-0.5);
                     \draw  (0.8,0.35 )--(1.3,0.12);
                     \draw [-latex] (2,-0.14 )--(2.2,-0.265);
                     \draw  (2.7,-0.5)--(3.5, -0.09);
                     \draw (2.7,0.5)--(3.52, 0.12);
                    \draw  (4.1,0.09)--(5.5, 0.75);
                    \draw [-latex]  (4.1,0.09)--(4.7, 0.373);

                    \draw (0.8,-0.35)--(1.3, -0.12);
                    \draw [-latex] (2,0.14 )--(2.2,0.265);
                    \draw  (2,0.14 )--(2.7,0.5);
                    \draw (4.1,-0.09)--(5.5, -0.818);
                    \draw [-latex] (4.1, -0.09)--(4.6, -0.35);

\draw [-latex] (-4, -1.5)--(-1.25,-0.44 );
\draw[](0.8,-0.35)--(0.8,0.35);
\draw[-latex](0.8,0)--(0.8,0.16);
\draw[-latex](0.8,-0.35)--(0.8,-0.1);
\draw[](2.7,-0.5)--(2.7,0.5);
\draw[-latex](2.7,0.5)--(2.7,0.2);
\draw[-latex](2.7,0)--(2.7,-0.2);

\draw (-4, 1.5)--(0.8,-0.35 );
\draw [-latex] (-4,  1.5)--(-1.25, 0.44 );

                                     \node  at (5.6,1) {\footnotesize $\Sigma_{21}$};
                                     \node  at (5.6,-1) {\footnotesize $\Sigma_{24}$};
                                      \node  at (3.3,0.6) {\footnotesize $\Sigma_{22}$};
                                     \node  at (3.3,-0.6) {\footnotesize $\Sigma_{23}$};
                                      \node  at (2.2,0.6) {\footnotesize $\Sigma_{12}$};
                                     \node  at (2.3,-0.6) {\footnotesize $\Sigma_{13}$};
                                        \node  at (-3.6,1) {\footnotesize $\Sigma_{02}$};
                                     \node  at (-3.6,-1) {\footnotesize $\Sigma_{03}$};

                                       \node  at (1.2,0.4) {\tiny $\Sigma_{11}$};
                                         \node  at (0.35,0.4) {\tiny $\Sigma_{01}$};
                                         \node  at (1.23,-0.4) {\tiny $\Sigma_{14}$};
                                         \node  at (0.35,-0.4) {\tiny $\Sigma_{04}$};
        \end{tikzpicture}
     }%

     \subfigure[The \ region  $ \xi<-1$]{
        \begin{tikzpicture}[scale=0.9]

            \draw[dashed,->](-6,0)--(4.5,0)node[black,right]{Re$z$};
            \foreach \x [count=\p] in {1,...,5} {
                 \node[shape=circle,fill=black, scale=0.15] (\p) at (\x*30:2) {};
                 \draw[line width=0.5] (\x*30:2) circle (0.2);
                 \draw[ ->] (\x*30:2)+(-0.2,0) arc (180:90:0.2);
                 \node[shape=circle,fill=black, scale=0.15] (\p) at (180+\x*30:2) {};
                 \draw[line width=0.5] (180+\x*30:2) circle (0.2);
                 \draw[ ->] (180+\x*30:2)+(0.2,0) arc (0:90:0.2);
                 };

            \node[below] at (2.1,0.05) {\footnotesize $1$};
            \node[below] at (0.1,0) {\footnotesize $0$};
            \node[right] at (1.2,1.8) {$z_j$};
            \node[right] at (1.2,-1.8) {$\bar{z}_j$};
           \draw [dashed, gray] (0,0) circle (2);


\draw  (4, -1.5)--(-0.8,0.35);
\draw[-latex] (-0.8,0.35)--(1.25,-0.44 );

\draw (4, 1.5)--(-0.8,-0.35 );
\draw [-latex] (-0.8,-0.35 )--(1.25, 0.44 );

\draw  (-4.1,0.09)--(-5.5, 0.75);
\draw [-latex]  (-5.5, 0.75)--(-4.7, 0.373);

\draw (-4.1,-0.09)--(-5.5, -0.818);
\draw [-latex] (-5.5, -0.818)--(-4.6, -0.35);

\draw  (-2,-0.14 )--(-2.7,-0.5);
\draw [-latex] (-2.7,-0.5)--(-2.2,-0.265);
\draw  (-0.8,0.35 )--(-1.3,0.12);

\draw  (-2.7,-0.5)--(-3.5, -0.09);
\draw (-2.7,0.5)--(-3.52, 0.12);
\draw (-0.8,-0.35)--(-1.3, -0.12);
\draw [-latex] (-2.7,0.5)--(-2.2,0.265);

\draw  (-2,0.14 )--(-2.7,0.5);

\draw[](-0.8,-0.35)--(-0.8,0.35);
\draw[-latex](-0.8,0)--(-0.8,0.16);
\draw[-latex](-0.8,-0.35)--(-0.8,-0.1);
\draw[](-2.7,-0.5)--(-2.7,0.5);
\draw[-latex](-2.7,0.5)--(-2.7,0.2);
\draw[-latex](-2.7,0)--(-2.7,-0.2);


                          \draw [red,line width=0.5] (-1.65,0) circle (0.35);
                          \node[shape=circle,fill=black, scale=0.2]  at (-1.7,0) {};
                          \node[below] at (-1.68,0) {\tiny $\xi_1$};

                          \draw [red,line width=0.5] (-3.8,0) circle (0.35);
                          \node[shape=circle,fill=black, scale=0.2]  at (-3.8,0) {};
                          \node[below] at (-3.8,0) {\tiny $\xi_2$};

                                     \node  at (-5.6,1) {\footnotesize $\Sigma_{22}$};
                                     \node  at (-5.6,-1) {\footnotesize $\Sigma_{23}$};
                                      \node  at (-3.2,0.6) {\footnotesize $\Sigma_{21}$};
                                     \node  at (-3.2,-0.6) {\footnotesize $\Sigma_{24}$};
                                      \node  at (-2.3,0.6) {\footnotesize $\Sigma_{11}$};
                                     \node  at (-2.3,-0.65) {\footnotesize $\Sigma_{14}$};
                                        \node  at (3.6,1) {\footnotesize $\Sigma_{01}$};
                                     \node  at (3.6,-1) {\footnotesize $\Sigma_{04}$};
                                       \node  at (-1.2,0.4) {\tiny $\Sigma_{12}$};
                                         \node  at (-0.35,0.4) {\tiny $\Sigma_{02}$};
                                         \node  at (-1.2,-0.4) {\tiny $\Sigma_{13}$};
                                         \node  at (-0.35,-0.4) {\tiny $\Sigma_{03}$};
        \end{tikzpicture}
     }%
    \end{center}
    \caption{\small{The jump contour $\Sigma^{E}$ for the $E(z;\xi)$. The blue circles are $\mathcal{U}_{\xi_k}(k=1,2)$.} }
    \label{figE}
\end{figure}
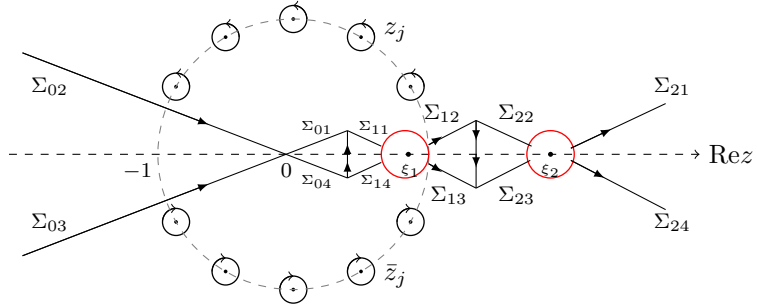
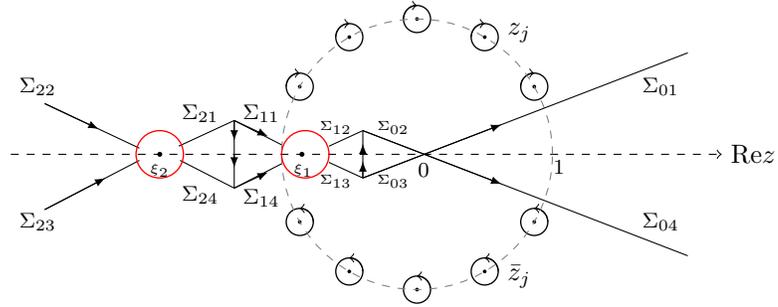

           \noindent\textbf{RHP9}.
            Find a matrix-valued function  $E(z)$ with the properties as follows:
          \begin{itemize}
          	\item Analyticity: $E(z)$ is analytic in $\mathbb{C}\backslash  \Sigma^{E}$, \  where  $\Sigma^{E}= \partial \mathcal{U}_{\xi} \cup \left( \Sigma^{(2)} \setminus \mathcal{U}_{\xi}\right).$
          	\item Jump condition:
          	\begin{align*}
          		E_{+}(z)=E_-(z)V^{E}(z), \quad z\in \Sigma^{E},
          	\end{align*}
          	where the jump matrix is given by
   \begin{align}\label{VE}
                V^{E}(z)= \begin{cases}
                    M^{out}(z)V^{(2)}(z)(M^{out})^{-1}(z),\quad z \in \Sigma^{(2)} \backslash \mathcal{U}_{\xi },\\[3pt]
                    M^{out}(z) M^{lo}(z) (M^{out})^{-1}(z), \quad z \in \partial \mathcal{U}_{\xi}.
                \end{cases}
            \end{align}
        See Figure \ref{figE}.
          	\item Asymptotic behavior:  $ E(z)=I+\mathcal{O}(z^{-1}),	\quad  z\to  \infty.$

          \end{itemize}

        Then we estimate the jump matrix of $E(z)$.
        \begin{proposition}
            \begin{equation}
                |V^{E}(z)-I|=\begin{cases}
                    \mathcal{O}(e^{-c_\rho t}),\quad z\in \Sigma^{(2)} \backslash   \mathcal{U}_{\xi},   \\
                    \mathcal{O}(t^{-1/2}), \quad z\in \partial \mathcal{U}_{\xi }.
                \end{cases}
            \end{equation}
        \end{proposition}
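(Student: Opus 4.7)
The plan is to split $\Sigma^E=(\Sigma^{(2)}\setminus\mathcal{U}_\xi)\cup\partial\mathcal{U}_\xi$ and treat each piece by direct conjugation, exploiting the fact that $M^{out}(z)=I+z^{-1}\sigma_1$ is completely explicit and both $M^{out}$ and $(M^{out})^{-1}=(1-z^{-2})^{-1}(I-z^{-1}\sigma_1)$ are uniformly bounded on $\Sigma^E$. The boundedness follows from the geometry: the choice of $\rho$ forces the local circles $\partial\mathcal{U}_{\xi_k}$ to stay away from the singularities $z=0,\pm 1$ of $M^{out}$ and $(M^{out})^{-1}$, while on the arcs $\Sigma_{0j}$ emanating from the origin one has $r(0)=0$, hence $f_{0j}\equiv 0$ and $V^{(2)}\equiv I$, so those arcs make no contribution to $V^E-I$ at all.

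For $z\in\Sigma^{(2)}\setminus\mathcal{U}_\xi$, the first line of (\ref{VE}) gives
\[
V^E(z)-I=M^{out}(z)\bigl(V^{(2)}(z)-I\bigr)(M^{out})^{-1}(z),
\]
and combining this identity with the uniform exponential bound $\|V^{(2)}-I\|_{L^\infty(\Sigma^{(2)}\setminus\mathcal{U}_\xi)}=\mathcal{O}(e^{-c_\rho t})$ supplied by Proposition~\ref{Estv2} yields the first estimate, possibly with a slightly reduced constant $c_\rho$.

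For $z\in\partial\mathcal{U}_\xi$, the second line of (\ref{VE}) gives
\[
V^E(z)-I=M^{out}(z)\bigl(M^{lo}(z)-I\bigr)(M^{out})^{-1}(z),
\]
and on each arc $\partial\mathcal{U}_{\xi_k}$ the local model satisfies $M^{lo}(z)=M^{pc,k}(s(z);r_{\xi_k})$ under the rescaling $s=\sqrt{2t\theta''(\xi_k)}(z-\xi_k)$, so $|s|=\sqrt{2t\theta''(\xi_k)}\,\rho=\mathcal{O}(t^{1/2})$. Inserting this into the large-$s$ expansion $M^{pc,k}(s)=I+M^{pc,k}_1/s+\mathcal{O}(s^{-2})$ obtained at the end of Section~\ref{locmod} produces $M^{lo}(z)-I=\mathcal{O}(t^{-1/2})$, and conjugation by the bounded $M^{out}$ and $(M^{out})^{-1}$ preserves this order.

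The only nontrivial point, and the place where one must be careful, is verifying that the parabolic-cylinder asymptotic expansion is uniform as $s$ traverses the full circle $|s|=\sqrt{2t\theta''(\xi_k)}\,\rho$, rather than merely along a single ray escaping to infinity. This is guaranteed because the classical expansion of the Weber solutions is uniform on each closed sector separated by the jump rays $\Sigma^{pc,k}_j$, and the rescaled circle intersects $\Sigma^{pc,k}$ transversally in finitely many points; the global $\mathcal{O}(t^{-1/2})$ bound thus assembles from finitely many sectoral contributions, each of which is controlled by the same expansion already used in Section~\ref{locmod}.
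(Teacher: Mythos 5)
Your argument is correct and follows essentially the same route as the paper's (very terse) proof: conjugation by the explicit, uniformly bounded $M^{out}$ and $(M^{out})^{-1}$, the exponential bound from Proposition \ref{Estv2} off $\mathcal{U}_{\xi}$, and the bound $|M^{lo}(z)-I|\lesssim t^{-1/2}$ on $\partial\mathcal{U}_{\xi}$ coming from the large-$s$ expansion of the parabolic cylinder model at $|s|\sim t^{1/2}$. Your added remarks on the uniformity of that expansion over the whole rescaled circle simply fill in details the paper leaves implicit.
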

        \begin{proof}
            Since $M^{out}$ is bounded, we have for $z \in  \Sigma^{(2)} \backslash  \mathcal{U}_{\xi }   $,
          \begin{equation}
              |V^{E}(z)-I| \lesssim  | V^{(2)}(z)-I| \lesssim e^{-c_\rho t},
          \end{equation}
          and for $z\in \partial \mathcal{U}_{\xi_k}$,
          \begin{equation}
            |V^{E}(z)-I| \lesssim |M^{lo,k}(z)-I| \lesssim t^{-1/2}.\nonumber
        \end{equation}

        \end{proof}
        According to   Beal-Cofiman theory, we decompose $V^{E}(z)=(b_-)^{-1}b_+, $ with
        \begin{align}
            &   b_-=I,\;  b_+=V^{E}, \ \
         w_-=0, \; w_+ =V^{E}-I.\nonumber
        \end{align}
       Let $C_{w }$ be an integral operator: $L^{2}(\Sigma^{E}) \to L^{2}(\Sigma^{E})$
        \begin{align}
           C_{w }f=C_-\left( f \left( V^{E}-I \right) \right),
        \end{align}
         where $C_-$ is the Cauchy projection operator on $\Sigma^{E}$ defined by
         \begin{equation}
            C_-f(z)=\lim_{z'\to z  \in \Sigma^{E}} \frac{1}{2\pi i} \int_{\Sigma^{E}} \frac{f(s)}{s-z'}\, \mathrm{d}s.
         \end{equation}
  Then     $||C_{w }||_{L^2(\Sigma^{E})}=\mathcal{O}(t^{-1/2})$   and  the {RHP9} exists a unique solution
         \begin{equation}
            E(z)=I +\frac{1}{2\pi i} \int_{\Sigma^{E}} \frac{\mu (s) \left( V^{E}-I\right)}{s-z}\, \mathrm{d}s, \nonumber
         \end{equation}
        where $\mu  \in L^2\left( \Sigma^{E}\right)$  satisfies $\left(I-C_{w }\right)\mu =I$.
        Moreover, we can derive
       \begin{equation}
          ||\mu -I||_{L^2(\Sigma^{E})}=\mathcal{O}(t^{-1/2}), \quad ||V^{E}-I||_{L^2(\Sigma^{E})}=\mathcal{O}(t^{-1/2}).
       \end{equation}

         In order to reconstruct the solution $q(x,t)$, we need the large-$z$ behavior of $E(z)$. We consider
        \begin{equation}
            E(z)=I +\frac{E_1}{z} +\mathcal{O}\left(z^{-1}\right),
        \end{equation}
     where
     \begin{equation}\label{E1}
        E_1=-\frac{1}{2\pi i} \int_{\Sigma^{E}} \mu (s)\left( V^{E}(s)-I \right)\, \mathrm{d}s.
     \end{equation}
        Specifically, $E_1$ can be expressed as the following formula after combining (\ref{VE}) to (\ref{E1}).
        \begin{proposition} \label{sum5}
            \begin{equation}
            E_1(x,t)= \sum_{k=1}^2 \frac{M^{out}M^{pc,k}_1(s;r_{\xi_k})(M^{out})^{-1}}{\sqrt{  2t\theta''(\xi_k) \epsilon_k}} + \mathcal{O}(t^{-1}).
        \end{equation}
        \end{proposition}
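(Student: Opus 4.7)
The plan is to start from the Cauchy integral representation \eqref{E1} and split the contour $\Sigma^{E}= (\Sigma^{(2)}\setminus \mathcal{U}_{\xi}) \cup \partial\mathcal{U}_{\xi_1}\cup \partial\mathcal{U}_{\xi_2}$, arguing that only the two circles around $\xi_1,\xi_2$ contribute to the order $t^{-1/2}$ term, and then evaluating each circular integral by residues via the local parabolic-cylinder expansion of $M^{lo,k}$.

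First I would show that the piece of $\Sigma^{E}$ outside the disks is negligible. On $\Sigma^{(2)}\setminus\mathcal{U}_{\xi}$ the previous proposition gives $|V^{E}-I|=\mathcal{O}(e^{-c_\rho t})$, and since $\|\mu\|_{L^2(\Sigma^{E})}\le 1+\|\mu-I\|_{L^2}=\mathcal{O}(1)$, the corresponding contribution to $E_1$ is exponentially small. On the two circles one has $\|V^{E}-I\|_{L^2(\partial\mathcal{U}_{\xi_k})}=\mathcal{O}(t^{-1/2})$ and $\|\mu-I\|_{L^2(\partial\mathcal{U}_{\xi_k})}=\mathcal{O}(t^{-1/2})$, so replacing $\mu$ by $I$ in \eqref{E1} produces an error
\[
\Big|\frac{1}{2\pi i}\int_{\partial\mathcal{U}_{\xi_k}}(\mu(s)-I)(V^{E}(s)-I)\,\mathrm{d}s\Big|
\lesssim \|\mu-I\|_{L^2}\|V^{E}-I\|_{L^2}=\mathcal{O}(t^{-1}),
\]
which is absorbed in the remainder.

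Next I would insert the local expansion of $M^{lo,k}$. The rescaling $s=\sqrt{2t\theta''(\xi_k)\epsilon_k}(z-\xi_k)$ and the large-$s$ asymptotics of $M^{pc,k}$ (RHP7 and \eqref{pc1}) yield, uniformly for $z\in\partial\mathcal{U}_{\xi_k}$,
\[
M^{lo,k}(z)-I=\frac{M^{pc,k}_1}{\sqrt{2t\theta''(\xi_k)\epsilon_k}\,(z-\xi_k)}+\mathcal{O}(t^{-1}).
\]
Using the expression \eqref{VE} of $V^{E}$ on $\partial\mathcal{U}_{\xi_k}$, the leading circular integral becomes (with the convention $\partial\mathcal{U}_{\xi_k}$ oriented so that Cauchy's theorem applies with the right sign)
\[
-\frac{1}{2\pi i}\oint_{\partial\mathcal{U}_{\xi_k}}\frac{M^{out}(z)\,M^{pc,k}_1\,M^{out}(z)^{-1}}{\sqrt{2t\theta''(\xi_k)\epsilon_k}\,(z-\xi_k)}\,\mathrm{d}z
=\frac{M^{out}(\xi_k)\,M^{pc,k}_1\,M^{out}(\xi_k)^{-1}}{\sqrt{2t\theta''(\xi_k)\epsilon_k}},
\]
since $M^{out}(z)=I+z^{-1}\sigma_1$ is analytic inside $\mathcal{U}_{\xi_k}$ (recall $0\notin\mathcal{U}_{\xi_k}$ by choice of $\rho$). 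Summing the residues at $\xi_1$ and $\xi_2$ and collecting the error terms from the three reductions above gives the claimed identity.

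The main technical obstacle is verifying that the subleading terms in the $M^{pc,k}$ expansion produce a genuine $\mathcal{O}(t^{-1})$ contribution when integrated over $\partial\mathcal{U}_{\xi_k}$: this requires uniform control of the remainder $M^{lo,k}(z)-I-(z-\xi_k)^{-1}M^{pc,k}_1/\sqrt{2t\theta''(\xi_k)\epsilon_k}$ on a circle of fixed radius $\rho$, which in turn relies on Proposition \ref{sum} and the separation of the two local models. The rest of the argument is routine small-norm RH manipulation and residue computation.
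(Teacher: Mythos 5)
Your argument is correct and is exactly the standard computation the paper leaves implicit (the paper states Proposition \ref{sum5} without proof, saying only that it follows by ``combining (\ref{VE}) to (\ref{E1})''): discard the unbounded part of $\Sigma^{E}$ as exponentially small, replace $\mu$ by $I$ on the two circles at cost $\mathcal{O}(t^{-1})$ via Cauchy--Schwarz, expand $M^{lo,k}$ through the rescaled parabolic-cylinder model so that $M^{lo,k}(z)-I=\frac{M^{pc,k}_1}{\sqrt{2t\theta''(\xi_k)\epsilon_k}(z-\xi_k)}+\mathcal{O}(t^{-1})$ on $|z-\xi_k|=\rho$, and evaluate the residue at $\xi_k$ using analyticity of $M^{out}$ and $(M^{out})^{-1}$ inside the disks. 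One small repair: since $\Sigma^{(2)}\setminus\mathcal{U}_{\xi}$ contains unbounded rays, $I\notin L^2(\Sigma^{E})$ and the bound $\|\mu\|_{L^2}\le 1+\|\mu-I\|_{L^2}$ is not literally meaningful; estimate $\int(\mu-I)(V^{E}-I)$ and $\int(V^{E}-I)$ separately using the exponential smallness of $V^{E}-I$ in $L^1\cap L^2$ off the disks, and note that the choice of $\rho$ also keeps $\pm1$ (not just $0$) outside $\mathcal{U}_{\xi_k}$, so that $(M^{out})^{-1}=(1-z^{-2})^{-1}\left(I-z^{-1}\sigma_1\right)$ is indeed analytic there.
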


        \section{ Contribution from  a  pure $\bar{\partial}$-problem }
             Now we turn to consider the pure $\bar{\partial}$-problem $M^{(3)}$ and its large-$z$ behavior.
             From the analysis at the beginning of Section 6, we define the function
            \begin{equation}\label{m3m2m2rhp}
                M^{(3)}(z)=M^{(2)}(z)\left(M^{(2)}_{RHP}(z)\right)^{-1},
            \end{equation}
            which  satisfies the following  RH problem.

            \noindent\textbf{$\bar{\partial}$ Problem}.  Find a matrix-valued function  $M^{(3)}(z)$ which satisfies
            \begin{itemize}
                \item Analyticity: $M^{(3)}(z)$ is continuous and has sectionally continuous first partial derivatives in $\mathbb{C}  \backslash \left(\mathbb{R} \cup \Sigma^{(2)}\right)$.
                \item Asymptotic behavior:  $M^{(3)}(z)=I+\mathcal{O}(z^{-1})$,   $z\to  \infty$; 	
                \item $ \bar{\partial}$-derivative:   $\bar{\partial} M^{(3)}(z) = M^{(3)}(z) W^{(3)}(z), \ \ z\in \mathbb{C},$
                where
                $$W^{(3)}(z):=M^{(2)}_{RHP}(z)  \bar{\partial} R^{(2)}(z) \left(M^{(2)}_{RHP}(z)\right)^{-1}$$
                 and $ \bar{\partial}  R^{(2)}(z)$ has been given in (\ref{parR2}).
            \end{itemize}
            \begin{proof}
                The  asymptotic behavior and $ \bar{\partial}$-derivative of $M^{(3)}$ can be obtained by the properties of $M^{(2)}$ and $M^{(2)}_{RHP}(z)$. Since $M^{(2)}$ and $M^{(2)}_{RHP}(z)$ have the same jump, we find $M^{(3)}$ has  no jump.
                With reference to the proof of $ \bar{\partial}$ Problem $6.1$ in the paper \cite{CJ}, we can show $M^{(3)}$ has no isolated singularities.

            \end{proof}
            Then the solution of $M^{(3)}$ can be given by the following integral equation
            \begin{equation} \label{Im3}
                M^{(3)}(z)=I-\frac{1}{\pi}  \iint_\mathbb{C} \frac{ M^{(3)}(s) W^{(3)}(s)}{s-z} \, \mathrm{d}A(s),
            \end{equation}
             where $\mathrm{d} A(s)$ is Lebesgue measure on the $\mathbb{C}$. Let $S$ denotes the solid Cauchy operator,
            \begin{equation}\label{Sm3}
                Sf(z)=\frac{1}{\pi} \iint \frac{f(s)W^{(3)}(s)}{s-z} \, \mathrm{d}A(s).
            \end{equation}
            So the equation (\ref{Im3}) can be written in the operator form
           \begin{equation}\label{Me}
            (I-S) M^{(3)}(z)=I.
           \end{equation}
           To show the existence and uniqueness of the solution of $M^{(3)}(z)$, we derive a proposition.
           \begin{proposition}\label{ests}
             For sufficiently large $t$, the operator (\ref{Sm3}) satisfies the estimation for $\left|\xi \right|>1$ and $\xi=\mathcal{O}(1)$,
            \begin{equation}
               ||S||_{L^\infty \to L^\infty} \lesssim t^{-1/4}. \label{22d}
            \end{equation}
            Hence, the resolvent operators $(I-S)^{-1}$ exists and then the operator equation (\ref{Me}) has  a unique solution.

           \end{proposition}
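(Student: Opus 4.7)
The plan is to reduce the operator-norm bound to a uniform-in-$z$ estimate of an area integral and then bound each contribution separately. From (\ref{Sm3}),
$$|Sf(z)| \le \frac{\|f\|_{L^\infty}}{\pi} \iint_{\mathbb{C}} \frac{|W^{(3)}(s)|}{|s-z|}\, dA(s),$$
so it suffices to prove $\sup_{z\in\mathbb{C}} \iint |W^{(3)}(s)|/|s-z|\, dA(s) \lesssim t^{-1/4}$. From Subsections \ref{outmod}--\ref{emod}, the factors $M^{out}(z) = I + z^{-1}\sigma_1$, $M^{lo,k}(z) = I + O(t^{-1/2})$, and $E(z) = I + O(t^{-1/2})$ are bounded on the relevant regions; although $(M^{out})^{-1}$ is singular at $z = \pm 1$ through the factor $(1-z^{-2})^{-1}$, the sharpened estimate $|\bar{\partial}R^{(2)}(z)| \lesssim |z \mp 1|$ near $\pm 1$ given in Proposition \ref{prop3} exactly cancels this singularity. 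Hence $|W^{(3)}(s)| \lesssim |\bar{\partial}R^{(2)}(s)|$ on all of $\bigcup_{k,j}\Omega_{kj}$, and the estimate of $|\bar{\partial}R^{(2)}|$ is split according to Proposition \ref{prop3} into the types $|r'(\re s)|$, $|s-\xi_k|^{-1/2}$, $|\varphi_k(\re s)|$ (only for $\xi<-1$), and $|s\mp 1|$ near $\pm 1$.

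For the $|r'|$ piece, fix a representative sector $\Omega_{21}$ (case $\xi>1$), parametrize $s = \xi_2 + u + iv$ with $v\ge 0$, and use $|e^{-2it\theta}|\le e^{-ctv}$ from Proposition \ref{prop2}. Cauchy--Schwarz in $u$ gives
$$\int_{\mathbb{R}} \frac{|r'(u)|}{|s-z|}\, du \le \|r'\|_{L^2(\mathbb{R})} \left(\int_{\mathbb{R}} \frac{du}{|s-z|^2}\right)^{1/2} = \|r'\|_{L^2}\sqrt{\pi/|v-v_z|},$$
and the remaining $v$-integral $\int_0^\infty e^{-ctv}/\sqrt{|v-v_z|}\, dv$ is $O(t^{-1/2})$ uniformly in $v_z$ after splitting $(0,\infty)$ into $(0,v_z/2)$, $(v_z/2,2v_z)$, $(2v_z,\infty)$ and elementary estimates on each subinterval. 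The $|s-\xi_k|^{-1/2}$ piece is treated analogously by Hölder's inequality in $u$ with exponents $(p,q) = (4,4/3)$: one gets $\|(a^2+v^2)^{-1/4}\|_{L^4(da)}\sim v^{-1/4}$ and $\|1/|s-z|\|_{L^{4/3}(da)}\sim |v-v_z|^{-1/4}$, leading to $\int_0^\infty e^{-ctv}v^{-1/4}|v-v_z|^{-1/4}\, dv = O(t^{-1/2})$ by the same splitting trick.

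The contribution dictating the $t^{-1/4}$ rate is the compactly supported cutoff piece occurring when $\xi<-1$. Setting $f(s) = |\varphi_k(\re s)|e^{-ctv}$, which is supported on a bounded set $K$ near $\pm 1$, Hölder's inequality with $1/p+1/p'=1$ and $p<2$ gives
$$\iint_K \frac{f(s)}{|s-z|}\, dA(s) \le \bigl\||s-z|^{-1}\bigr\|_{L^p(K)} \|f\|_{L^{p'}(K)},$$
where the first factor is bounded uniformly in $z$ because the 2D Cauchy kernel is locally $L^p$ for $p<2$. Since $\|f\|_{L^{p'}(K)}^{p'} = \iint_K |\varphi_k|^{p'} e^{-p'ctv}\, dA \lesssim t^{-1}$, one has $\|f\|_{L^{p'}}\lesssim t^{-1/p'}$. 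Choosing $p' = 4$ (so $p=4/3$) gives an $O(t^{-1/4})$ bound. The $|s\mp 1|$ piece is handled identically, with the extra vanishing factor only improving the estimate.

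Summing the four types of contributions, $\|S\|_{L^\infty\to L^\infty}\lesssim t^{-1/4}$, which for $t$ sufficiently large is strictly less than one; the Neumann series $(I-S)^{-1} = \sum_{n\ge 0} S^n$ then converges in operator norm, establishing unique solvability of (\ref{Me}). The hard parts are the precise cancellation at $z = \pm 1$ between the singularity of $(M^{out})^{-1}$ and the vanishing of $\bar{\partial}R^{(2)}$ (which is needed to make $W^{(3)}$ bounded there to begin with), and the simultaneous handling of the two singularities---the Cauchy kernel $1/|s-z|$ and the weight $|s-\xi_k|^{-1/2}$---uniformly in $z\in\mathbb{C}$ via the Hölder split above. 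The bottleneck for the overall rate is the cutoff piece near $\pm 1$ in the $\xi<-1$ regime: the Riesz-potential estimate cannot be pushed below $p' = 2$ without losing uniform control in $z$, which is precisely why one recovers $t^{-1/4}$ rather than the $t^{-1/2}$ seen in the other pieces.
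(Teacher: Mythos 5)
Your overall architecture (reduce to a uniform-in-$z$ bound on $\iint|W^{(3)}(s)|\,|s-z|^{-1}\,\mathrm{d}A(s)$, control $M^{(2)}_{RHP}$ and its inverse, split $|\bar{\partial}R^{(2)}|$ according to Proposition \ref{prop3}, then apply Cauchy--Schwarz or H\"older in the horizontal variable followed by the $v$-integration) matches the paper's, and your explicit treatment of the cancellation at $z=\pm1$ and of the cutoff piece is if anything more careful than the paper's. However, there is a genuine error in the decay estimate you feed into the sectors adjacent to the stationary points, and it propagates into a wrong identification of where the $t^{-1/4}$ rate comes from. The bound $|e^{-2it\theta}|\le e^{-ctv}$ with a constant $c>0$ uniform over the sector is not valid on $\Omega_{k1}$ (and its companions) near the vertex: there $\re(2it\theta)\approx -2t|\theta''(\xi_k)|(u-\xi_k)v$ with $u-\xi_k$ comparable to $v$ inside a sector of opening angle $<\pi/4$, so the coefficient of $v$ in Proposition \ref{prop2} degenerates at $\xi_k$ and the true uniform decay there is only Gaussian, $e^{-ctv^2}$. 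This is exactly what the paper's own proof uses, replacing $e^{-\re(2it\theta)}$ by $e^{2t\theta''(\xi_2)(u-\xi_2)v}\le e^{2t\theta''(\xi_2)v^2}$ rather than by $e^{-ctv}$. With the correct Gaussian weight your $v$-integrals become $\int_0^\infty e^{-ctv^2}|v-v_z|^{-1/2}\,\mathrm{d}v$ and $\int_0^\infty e^{-ctv^2}v^{-1/4}|v-v_z|^{-1/4}\,\mathrm{d}v$, both of which are $O(t^{-1/4})$, not $O(t^{-1/2})$; these are the pieces that saturate the rate.

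Consequently your diagnosis of the bottleneck is also wrong. The compactly supported cutoff piece near $z=\pm1$ lives away from the stationary points, where the linear decay $e^{-ctv}$ \emph{is} legitimate, and your own H\"older argument with any fixed $p'>2$ gives $O(t^{-1/p'})$, hence $O(t^{-1/2+\epsilon})$ upon taking $p'$ close to $2$; the choice $p'=4$ is merely suboptimal, not a structural obstruction, so this piece does not dictate the exponent. The final inequality $\|S\|_{L^\infty\to L^\infty}\lesssim t^{-1/4}$ does survive once the stationary-point sectors are re-estimated with the Gaussian decay, and the Neumann-series conclusion is fine, but as written your proof asserts false intermediate rates ($t^{-1/2}$ for the $|r'|$ and $|s-\xi_k|^{-1/2}$ contributions) and misattributes the source of the exponent $1/4$, which is the essential analytic content of this proposition.
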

           \begin{proof}
               We just give a proof for the case $z\in \Omega_{21}$ and other cases can be proved by similar methods.
             Let $f\in L^{\infty}(\Omega_{21})$, $s=u+iv$ with $u>\xi_2$ and $u>v$  and $z=\alpha+i\eta$. Then we obtain
            \begin{align}\label{Sf}
                   |S(f)|
                   \lesssim ||f||_{L^\infty} ||M^{(2)}_{RHP}||_{L^\infty}^2 \iint_{\Omega_{21}} \frac{|\bar{\partial} R_{22}|e^{-\re(2it\theta)}}{|s-z|}\, \mathrm{d} A(s) \lesssim I_1+I_2,
            \end{align}
           where
           \begin{align}
             I_1= \iint_{\Omega_{21}} \frac{|r'(u)|e^{-\re(2it\theta)}}{|s-z|}\, \mathrm{d} A(s), \quad I_2= \iint_{\Omega_{21}} \frac{|s-\xi_2|^{-1/2}e^{-\re(2it\theta)}}{|s-z|}\, \mathrm{d} A(s).\nonumber
           \end{align}
           According to the asymptotic behavior (\ref{G}), we find
          \begin{equation}
            e^{-\re(2it\theta)}=e^{2t\theta''(\xi_2)(u-\xi_2)v},\quad t \to \infty.
          \end{equation}
          Then, we know
          \begin{align*}
            I_1 &\le \int_{0}^{\infty} e^{2t\theta''(\xi_2)v^2} |r'(u)| |(s-z)^{-1}|  \, \mathrm{d}v \\
            &\le \int_{0}^{\infty} e^{2t\theta''(\xi_2)v^2} ||r'||_{L^2(v+\xi_2,\infty)}||(s-z)^{-1}||_{L^2(v+\xi_2,\infty)} \, \mathrm{d}v,
         \end{align*}
           where
           \begin{align*}
            ||(s-z)^{-1}||_{L^2(v+\xi_2,\infty)} & \le \int_{v+\xi_2}^\infty \frac{1}{|s-z|^2}\mathrm{d}u \le \int_{-\infty}^\infty \frac{1}{(u-\alpha)^2 + (v-\eta)^2} \mathrm{d} u \\
            &= \frac{1}{|v-\eta|} \int_{-\infty}^{\infty} \frac{1}{1+y^2} \mathrm{d} y = \frac{\pi}{|v-\eta|}
           \end{align*}
          and $y = \frac{u-\alpha}{v-\eta}$.
          Thus,  $I_1$ can be estimated by
          \begin{align}
             I_1  \lesssim t^{-1/4}. \label{wee}
          \end{align}

          Next, we consider the estimation of $I_2$
          \begin{align*}
            I_2 &\le  \int_{0}^{\infty} e^{2t\theta''(\xi_2)v^2}   |s-\xi_2|^{-1/2} |s-z|^{-1} \mathrm{d} v\\
                &\le  \int_{0}^{\infty} e^{2t\theta''(\xi_2)v^2}    ||(|s-\xi_2|)^{-1/2}||_{L^p(v+\xi_2,\infty)}    ||(s-z)^{-1}||_{L^q(v+\xi_2,\infty)}  \mathrm{d} v
          \end{align*}
         where  $p>2$ and $p^{-1}+q^{-1}=1$. It is easy to derive
         \begin{align}\label{ests2}
            ||(|s-\xi_2|)^{-1/2}||_{L^p(v+\xi_2,\infty)} & = \left( \int_{v+\xi_2}^\infty  \frac{1}{|u-\xi_2+iv|^{p/2}} \mathrm{d} u     \right)^{1/p}  = \left( \int_{v}^\infty \frac{1}{\left(u^2 + v^2\right)^{p/4}}  \mathrm{d} u \right)^{1/p} \nonumber\\
            &= v^{1/p-1/2} \left( \int_{1}^\infty \frac{1}{\left(1+x^2\right)^{p/4}}    \right)   \lesssim v^{1/p-1/2}.\nonumber
         \end{align}
         Similar to the above estimates, it can be shown that
         \begin{align}
           ||(s-z)^{-1}||_{L^q(v+\xi_2,\infty)} \lesssim |v-\eta|^{1/q-1}.
          \end{align}
          Then we have $|I_2| \lesssim t^{-1/4}$, which together with (\ref{wee})  and (\ref{Sf}) gives (\ref{22d}).

            \end{proof}

        In order to discuss the long-time asymptotic behavior of the Cauchy  problem (\ref{q})-(\ref{inq}), we need to consider the asymptotic behavior of $M^{(3)}$ as $t\to \infty$. We first make the Taylor expansion of $M^{(3)}(z)$ at infty
        \begin{equation}
            M^{(3)}(z)=I + \frac{M^{(3)}_1(x,t) }{z} +\mathcal{O}(z^{-2}),
        \end{equation}
         where $M^{(3)}_1(x,t)=\frac{1}{\pi} \iint_{\mathbb{C}} M^{(3)}(s)W^{(3)}(s)\, \mathrm{d}A(s)$ and has the following property:
        \begin{proposition}
           For $|\xi|>1(\xi=\mathcal{O}(1))$,  $|M^{(3)}_1(x,t)| \lesssim t^{-3/4}$ as $t\to\infty$.
        \end{proposition}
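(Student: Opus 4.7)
The plan is to bound $|M^{(3)}_1(x,t)|$ by the area integral $\frac{1}{\pi}\iint_{\mathbb C}|M^{(3)}(s)\,W^{(3)}(s)|\,dA(s)$ and then exploit three ingredients simultaneously: (i) the uniform boundedness of $M^{(3)}$ and $M^{(2)}_{RHP}$ on $\mathbb C$ (the former from Proposition \ref{ests}, since $\|S\|_{L^\infty\to L^\infty}\lesssim t^{-1/4}$ implies $M^{(3)}=I+o(1)$; the latter from the explicit formula for $M^{out}$ and the asymptotic analysis of $M^{lo}$ and $E$); (ii) the pointwise bound \eqref{DbarR}--\eqref{dbarrij} on $\bar\partial R^{(2)}$; and (iii) the sign table of Proposition \ref{prop2}, which gives $|e^{\pm 2it\theta(z)}|\le e^{-c(\xi,\xi_k)tv(u-\xi_k)}$ on each sector $\Omega_{kj}$ (after writing $s=\xi_k+(u-\xi_k)+iv$). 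Thus the problem reduces to showing
\[
\sum_{k,j}\iint_{\Omega_{kj}} \bigl(|r'(u)|+|s-\xi_k|^{-1/2}\bigr)\,e^{-ctv(u-\xi_k)}\,du\,dv \;\lesssim\; t^{-3/4}.
\]

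For a representative sector, say $\Omega_{21}$ with $\xi>1$, I would treat the two summands separately. For the $|r'|$ term, I apply Cauchy--Schwarz in $u$: using $\|r'\|_{L^2}<\infty$ (since $r\in H^1$) and $\bigl(\int_{v}^\infty e^{-2ctv w}\,dw\bigr)^{1/2}=(2ctv)^{-1/2}e^{-ctv^2}$, I am reduced to
\[
\frac{\|r'\|_{L^2}}{\sqrt{2ct}}\int_0^\infty v^{-1/2}e^{-ctv^2}\,dv = O(t^{-1/2})\cdot O(t^{-1/4}) = O(t^{-3/4}),
\]
after the change of variables $w=v\sqrt{ct}$. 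For the singular term $|s-\xi_k|^{-1/2}$, the naive bound $|s-\xi_k|^{-1/2}\le v^{-1/2}$ is too weak (it yields a divergent $v$-integral at the origin), so I instead apply H\"older with exponents $p\in(2,4)$ and $q=p/(p-1)$: using $\|(|s-\xi_k|)^{-1/2}\|_{L^p(v,\infty)}\lesssim v^{1/p-1/2}$ (as computed in the proof of Proposition \ref{ests}) and $\|e^{-ctv(\cdot-\xi_k)}\|_{L^q(v,\infty)}\lesssim(ctv)^{-1/q}e^{-ctv^2}$, the exponent on $v$ becomes $2/p-3/2\in(-1,-1/2)$, and the Gaussian rescaling yields $(ct)^{-1/q}\cdot(ct)^{1/4-1/p}=(ct)^{-3/4}$ as required. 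The remaining sectors $\Omega_{kj}$ are handled identically by symmetry and by the sign table.

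For the case $\xi<-1$, the bound \eqref{dbarrij} contains the extra compactly supported pieces $|\varphi_k(\mathrm{Re}\,z)|$ near $z=\mp 1$ and the smooth factors $|z\pm 1|$. Since the supports of $\varphi^{(\pm 1)}$ are bounded away from the phase points, on these supports $e^{-c|\mathrm{Re}(2it\theta)|}$ decays exponentially in $t$ (by Proposition \ref{prop2}, since $v\ne 0$ and the corresponding $u$-coordinates remain bounded away from $\xi_k$), so these contributions are $O(e^{-ct})$, easily absorbed into the $t^{-3/4}$ bound. Collecting all twelve sectors and the compact correction terms finishes the estimate.

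The main obstacle, as I see it, is the singular $|s-\xi_k|^{-1/2}$ contribution, where the naive $L^\infty$ bound in $u$ fails and one must trade a $v^{-1/2}$ blowup against the Gaussian via a H\"older split with $p\in(2,4)$. The arithmetic of the exponents must work out precisely so that the power of $v$ at the origin is integrable while the overall power of $t$ lands at exactly $-3/4$; any looser choice (e.g.\ $p=2$ or $p\to 4$) would either fail integrability or yield only $t^{-1/2}$. The rest of the bookkeeping (checking that each of the twelve sectors gives the same exponent, and that the interior pieces near $\pm 1$ for $\xi<-1$ are negligible) is routine once this key balance has been identified.
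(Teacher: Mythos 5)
Your proposal is correct and follows essentially the same route as the paper: both reduce $|M^{(3)}_1|$ to the area integral of $|\bar\partial R^{(2)}|\,e^{-\re(2it\theta)}$ using the boundedness of $M^{(3)}$ and $M^{(2)}_{RHP}$, estimate the $|r'|$ piece by Cauchy--Schwarz in $u$ and the $|s-\xi_k|^{-1/2}$ piece by H\"older with $p\in(2,4)$, and land on $t^{-3/4}$ with identical exponent bookkeeping. The only quibble is your claim that the compactly supported $\varphi^{(\pm1)}$ contributions for $\xi<-1$ are $O(e^{-ct})$: since $v\to 0$ within those supports the decay is not uniformly exponential in $t$, but integrating $e^{-c t v\,\delta}$ over $v$ still gives $O(t^{-1})$, which is absorbed into the error as you conclude.
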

        \begin{proof}
            We only give details for the case $z \in \Omega_{21}$, and other cases can be handled with the same method.
            Since $M^{(2)}_{RHP}$ is bounded in the entire complex plane except  the poles, $M^{(2)}_{RHP}$ is bounded in the region $\Omega_{21}$.
            As we have done in the proof of the proposition (\ref{ests}), let $s=u+iv$ with $u>\xi_2$ and $u>v$. Then, $M^{(3)}_1$ has the following estimation
            \begin{align}
                |M^{(3)}_1| \lesssim \frac{1}{\pi} ||M^{(3)}||_{L^\infty} ||M^{(2)}_{RHP}||_{L^\infty}^2 \iint_{\Omega_{21}} |\bar{\partial}R_{21} e^{-2it\theta}|\, \mathrm{d}A(s) \lesssim I_3+I_4,
            \end{align}
             where
             \begin{align}
                 &I_3= \iint_{\Omega_{21}} |r'(u)|e^{2t\theta''(\xi_2)(u-\xi_2)v}\, \mathrm{d}A(s), \nonumber\\
                 &I_4=\iint_{\Omega_{21}} |s-\xi_2|^{-1/2}e^{2t\theta''(\xi_2)(u-\xi_2)v}\, \mathrm{d}A(s).\nonumber
             \end{align}
       Using the inequality of Cauchy-Schwarz, we obtain
             \begin{align}
                |I_3| \le \int_{0}^{\infty} ||r'(u)||_{L^2(v+\xi_2,\infty)} \left( \int_{v+\xi_2}^\infty e^{4t\theta''(\xi_2)(u-\xi_2)v}\, \mathrm{d} u      \right)^{1/2}\, \mathrm{d}v  \lesssim t^{-3/4}.
            \end{align}
           For $2 < p<4$ and $1/p+1/q=1$,  using the formula (\ref{ests2}), we have
           \begin{align}
               |I_4| \le \int_{0}^\infty v^{1/p-1/2}  \left( \int_{v}^\infty e^{2t\theta''(\xi_2)quv}\, \mathrm{d}u \right)^{1/q} \, \mathrm{d}v  \lesssim t^{-3/4}.\nonumber
           \end{align}
           Therefore, we finish the proof.
        \end{proof}

         \section{Large-time asymptotics for   NLS equation  }
          Finally, from the above results in Sections 2-7,  we start to construct the large time asymptotics for the solution of  the Cauchy problem of the NLS equation  (\ref{q})-(\ref{inq})
           and  the obtained  main   result  is as below.

    \begin{theorem}\label{lta}
        Let $q(x,t)$ be the solution of the Cauchy problem  (\ref{q})-(\ref{inq}) with the initial data $q_0(x) \in \tanh(x) + H^{4,4}(\mathbb{R})$.
      Then, for  $|\xi|>1, \ \xi=\mathcal{O}(1)$,  there exists a   constant $T_0=T_0(q_0,\xi)$ such that for all $t>T_0$,
         \begin{equation}
              q(x,t)= e^{-i\alpha(\infty)} \left( 1 +t^{-1/2} h(x,t) \right)+\mathcal{O}\left(t^{-3/4}\right),
         \end{equation}
        where
        \begin{align}
        &\alpha(\infty)=\exp\left( \int_{I(\xi)} \frac{v(s)}{s}\, \mathrm{d}s \right),\label{alpinf}\\
          &  h(x,t)= \frac{\left(v(\xi_1)\right)^{1/2}}{2\sqrt{t\pi}\left(1-\xi_1^2\right)i} \left[ \frac{\xi_1^2 e^{-i\Phi_1}+ e^{i\Phi_1}}{\sqrt{|\theta''(\xi_1)|}} +\frac{e^{-i\Phi_2}+ \xi_1^2 e^{i\Phi_2}}{\sqrt{|\theta''(\xi_1^{-1})|}}    \right],           \label{hmout}
        \end{align}
       with
       \begin{align*}
        &v(z) =  -\frac{1}{2} \log(1-|r(z)|^2),\Phi_1 = \frac{\pi}{4} + \arg \Gamma(iv(\xi_1))-\arg \left(r_{\xi_1} \right),\Phi_2 = \Phi_1 + \alpha  - iv(\xi_1),\\
        &\alpha = \frac{\pi}{2} + 4t\theta(\xi_1) + v(\xi_1) \log \left(4t^2|\theta''(\xi_1)\theta''(\xi_1^{-1})| \right)+ 2 \arg \Gamma(iv(\xi_1)) + 2 \arg \frac{T_1(\xi_1)}{T_2(\xi_1^{-1})},
       \end{align*}
       and $T_k(z)(k=1,2)$ have been  given in (\ref{betai}).


    \end{theorem}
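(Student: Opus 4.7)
The strategy is to unwind the chain of invertible transformations back to $M(z)$, expand every factor at $z=\infty$, and then apply the reconstruction formula (\ref{reconst}). For $z$ large enough to lie outside $\mathcal{U}_\xi$, outside the lens regions, and outside the pole disks, one has $G(z)=I$ and $R^{(2)}(z)=I$, so combining (\ref{mtom1}), (\ref{m1tom2}), (\ref{m2rhp}) and (\ref{m3m2m2rhp}) gives
\begin{equation}
M(z)=T(\infty)^{\sigma_3}\,M^{(3)}(z)\,E(z)\,M^{out}(z)\,T(z)^{-\sigma_3}.
\end{equation}
Inserting $M^{out}(z)=I+z^{-1}\sigma_1$, $E(z)=I+E_1/z+O(z^{-2})$, $M^{(3)}(z)=I+M^{(3)}_1/z+O(z^{-2})$, and the expansion of $T(z)$ at infinity given in Proposition~\ref{prop1}, and noting that the conjugation by $T(\infty)^{\sigma_3}$ multiplies the $(21)$-entry by $T(\infty)^{-2}$ while the $1/z$ term from $T(z)^{-\sigma_3}$ is diagonal and cancels in the $(21)$-position, the reconstruction formula yields
\begin{equation}
q(x,t)=T(\infty)^{-2}\Bigl[(\sigma_1)_{21}+E_1^{(21)}+(M^{(3)}_1)^{(21)}\Bigr].
\end{equation}

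The first summand produces the leading background: $T(\infty)^{-2}\cdot 1=e^{-i\alpha(\infty)}$ upon rewriting the defining integral of $T(\infty)$ via (\ref{alpinf}). The third summand is absorbed into the error because Section~7 gives $|M^{(3)}_1|\lesssim t^{-3/4}$. The remaining work is to evaluate $T(\infty)^{-2} E_1^{(21)}$ explicitly using Proposition~\ref{sum5},
\begin{equation}
E_1=\sum_{k=1}^{2}\frac{M^{out}(\xi_k)\,M^{pc,k}_1\,(M^{out}(\xi_k))^{-1}}{\sqrt{2t\,\epsilon_k\,\theta''(\xi_k)}}+\mathcal{O}(t^{-1}),
\end{equation}
with $M^{pc,k}_1$ as in (\ref{pc1}) and $M^{out}(\xi_k)=I+\xi_k^{-1}\sigma_1$. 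A direct $2\times 2$ multiplication, using $(M^{out}(\xi_k))^{-1}=(1-\xi_k^{-2})^{-1}(I-\xi_k^{-1}\sigma_1)$, expresses $E_1^{(21)}$ as a sum of two terms, each containing $\beta^{\xi_k}_{12}$ and $\beta^{\xi_k}_{21}$ multiplied by rational functions of $\xi_k$.

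To reach the stated form for $h(x,t)$, I will exploit the symmetry $\xi_1\xi_2=1$ which is forced by $l(s)=s^2-\xi s-2$ and the fact that the two roots $z$ of $z+z^{-1}=s_+$ are reciprocals; this automatically recasts the $k=2$ contribution in terms of $\xi_1^{-1}$ and the factor $(1-\xi_1^2)^{-1}$ emerges from the common denominator $1-\xi_k^{-2}$. The identities $\beta_{12}^{\xi_k}\beta_{21}^{\xi_k}=v(\xi_k)$ together with the explicit $\arg\beta_{12}^{\xi_k}=(2k-1)\pi/4-\arg r_{\xi_k}+\arg\Gamma(i\epsilon_k v(\xi_k))$ turn the modulus into $\sqrt{v(\xi_1)}$ and assemble the phases $\Phi_1,\Phi_2$ from the three ingredients $2t\theta(\xi_k)$ and $v(\xi_k)\log(2t\theta''(\xi_k))$ inside $r_{\xi_k}$ in (\ref{rdd}), the $\arg\Gamma$ correction from the parabolic cylinder functions, and $\arg T_k(\xi_k)$ from Proposition~\ref{prop1}.

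The main obstacle is purely bookkeeping: pairing the two stationary-point contributions so that the exponentials cross-pair into $e^{\pm i\Phi_1}$ and $e^{\pm i\Phi_2}$ and verifying that the residual factor $T(\infty)^{-2}$ in front agrees with the explicit form of $\alpha(\infty)$. In particular one must carefully track that the $(21)$-entry picks up both $\beta_{12}$ (from the off-diagonal of $M^{pc,k}_1$ acted on from the right by $-\xi_k^{-1}\sigma_1$) and $\beta_{21}$ (the genuine $(21)$ entry), which is what produces the two-term structure $\xi_1^2 e^{-i\Phi_j}+e^{i\Phi_j}$ inside each square-bracket summand. Once these phases are identified and the bound $|M^{(3)}_1|=\mathcal{O}(t^{-3/4})$ from Section~7 is inserted along with the $\mathcal{O}(t^{-1})$ error from Proposition~\ref{sum5}, collecting all errors into the single $\mathcal{O}(t^{-3/4})$ term completes the proof for all $t>T_0(q_0,\xi)$.
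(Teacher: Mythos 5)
Your proposal follows essentially the same route as the paper's proof: invert the chain of transformations \eqref{mtom1}, \eqref{m1tom2}, \eqref{m2rhp}, \eqref{m3m2m2rhp}, expand each factor at $z=\infty$, note that the diagonal $T_1\sigma_3$ term and the $O(t^{-3/4})$ bound on $M^{(3)}_1$ leave only $(\sigma_1)_{21}=1$ and $E_1^{(21)}$ in the $(21)$-entry, apply the reconstruction formula \eqref{reconst}, and rewrite $T(\infty)^{-2}=e^{-i\alpha(\infty)}$. Your sketch of how $E_1^{(21)}$ is evaluated via Proposition \ref{sum5}, the parabolic-cylinder coefficients \eqref{pc1}, and the reciprocal symmetry $\xi_1\xi_2=1$ is in fact more explicit than the paper's own proof, which asserts the formula for $h(x,t)$ without displaying that computation.
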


    \begin{proof}
       Inverting the sequence of transformations (\ref{mtom1}), (\ref{m1tom2}), (\ref{m2rhp}) and (\ref{m3m2m2rhp}), we have for $z\in \mathbb{C} \backslash \mathcal{U}_{\xi }  $
      \begin{equation}
          M(z)=T(\infty)^{\sigma_3}M^{(3)}(z)E(z)M^{out}(z)\big(R^{(2)}\big)^{-1}(z)T(z)^{-\sigma_3}.
      \end{equation}
       For convenience, we take $z\to\infty$ out of $\bar{\Omega}$ and obtain
      \begin{align*}
        \begin{split}
         M &=T(\infty)^{\sigma_3} \left( I+\frac{M_1^{(3)}}{z}+ \cdots\right)  \left(I +\frac{E_1}{z}+\cdots \right)\\
           &\left(I+\frac{M^{out}_1}{z}+\cdots \right)\left( I+\frac{T_1 \sigma_3}{z}+\cdots\right) T(\infty)^{-\sigma_3}\\
           &=I+z^{-1} T(\infty)^{\sigma_3} \left[ M_1^{(3)} +E_1 + M^{out}_1+T_1 \sigma_3    \right] T(\infty)^{-\sigma_3} +\mathcal{O}(z^{-2}),
        \end{split}
      \end{align*}
      where the sign with the subscript $1$ indicates the coefficient of $z^{-1}$ of this term.
      By the reconstruction formula (\ref{reconst}),  we find
      \begin{align}
          q(x,t)= T(\infty)^{-2} \left( 1  + t^{-1/2} h(x,t)   \right) + \mathcal{O}(t^{-3/4}).
      \end{align}
      Moreover, $T(\infty)^{-2}$ can be rewritten into the following form:
     \begin{equation}
         T(\infty)^{-2} =\exp \left( -2i\int_{ I(\xi)} \frac{v(s)}{2s}\, \mathrm{d}s \right)=e^{-i\alpha(\infty)}.
     \end{equation}
     Thus, we have completed  the proof.
    \end{proof}


\noindent\textbf{Acknowledgements}

This work is supported by  the National Natural Science
Foundation of China (Grant No. 11671095,  51879045, 11801323).

\end{document}